\documentclass[11pt,reqno]{amsart}

\usepackage[T1]{fontenc}
\usepackage[utf8]{inputenc}
\usepackage{amsmath,amssymb,amsthm,mathtools}
\usepackage{graphicx}
\usepackage{booktabs}
\usepackage{enumitem}
\usepackage[expansion=false,protrusion=true]{microtype}
\usepackage{tikz}
\usetikzlibrary{arrows.meta,decorations.markings,patterns}
\usepackage[margin=1.15in]{geometry}
\usepackage[colorlinks=true,linkcolor=blue!55!black,citecolor=blue!55!black,
            urlcolor=blue!55!black]{hyperref}

\theoremstyle{plain}
\newtheorem{theorem}{Theorem}[section]
\newtheorem{proposition}[theorem]{Proposition}
\newtheorem{lemma}[theorem]{Lemma}
\newtheorem{corollary}[theorem]{Corollary}
\theoremstyle{definition}
\newtheorem{definition}[theorem]{Definition}
\newtheorem{remark}[theorem]{Remark}

\newcommand{\C}{\mathbb{C}}
\newcommand{\R}{\mathbb{R}}
\newcommand{\Z}{\mathbb{Z}}

\newcommand{\Q}{\mathbb{Q}}
\newcommand{\Rea}{\operatorname{Re}}
\newcommand{\Ima}{\operatorname{Im}}
\newcommand{\Harc}{\textup{(H$_{\mathrm{arc}}$)}}
\newcommand{\Res}{\operatorname*{Res}}
\newcommand{\Gs}{\mathcal{G}}
\newcommand{\Lp}{\Lambda^{+}}
\newcommand{\Lm}{\Lambda^{-}}
\newcommand{\Kr}{K^{\rightarrow}}
\newcommand{\Kl}{K^{\leftarrow}}
\newcommand{\Ku}{K^{\uparrow}}
\newcommand{\Kd}{K^{\downarrow}}
\newcommand{\va}{\mathbf{a}}
\newcommand{\vb}{\mathbf{b}}
\newcommand{\eps}{\varepsilon}

\begin{document}

\title[The Meijer--Barnes $K$-function for $N<0$]
      {Completing the Meijer--Barnes $K$-function:\\ the excluded case $N<0$}

\author{Changhao He}
\address{CEMSE Division, King Abdullah University of Science and Technology (KAUST),
Thuwal 23955-6900, Kingdom of Saudi Arabia}
\email{changhao.he@kaust.edu.sa}

\date{\today\\[2pt]{\small\textsc{version 7}\ \ ---\ \ all references to
\cite{KK2024} are to the published version, \emph{Constr.\ Approx.} \textbf{63}
(2026), 223--256}}

\subjclass[2020]{Primary 33C60; Secondary 33B15, 30E20, 41A60}
\keywords{Meijer $G$-function, Barnes double gamma function, Mellin--Barnes integral,
contour classification, residue expansion, modular transformation}

\begin{abstract}
Karp and Kuznetsov \cite{KK2024} introduced the \emph{Meijer--Barnes
$K$-function} by replacing every Euler gamma factor in the Mellin--Barnes kernel
of the Meijer $G$-function by a Barnes double gamma factor $G(z;\tau)$. Their
definition is organised by the integer $N=2(m+n)-p-q$ and \emph{explicitly
excludes} the case $N<0$, on the grounds that four mutually non-homotopic
contours are then admissible, so that the defining integral does not single out
one function.

This paper supplies the missing case. We show that for $N<0$ the integrand
decays super-exponentially in the two \emph{horizontal} sectors, so that all additional convergence restrictions from the original definition
disappear: the integral
converges absolutely for every $z$ on the Riemann surface of the logarithm and
every $\alpha\in\C$. Under a precise notion of admissible contour there are then
exactly four classes, $\Kr,\Kl,\Ku,\Kd$; they satisfy the universal linear
relation $\Ku+\Kd=\Kr+\Kl$ and therefore span a space of dimension at most
three, which for real data reduces to at most three real degrees of freedom with
$\Kd=\overline{\Ku}$. We obtain residue expansions for $\Kr$ and $\Kl$, and
vanishing theorems strictly stronger than the $N\ge0$ analogue, under an
explicit closing-arc growth hypothesis, motivated by Diophantine small-denominator
phenomena, that is needed to control the
arcs on which the known kernel asymptotic does not apply; and we show that all three transformation laws persist
unconditionally, with the inversion law interchanging $\Kr$ and $\Kl$ while
fixing $\Ku,\Kd$.

Two further structural results are proved. First, no vertical contour is
admissible when $N<0$, so the standard vertical-contour Mellin-transform
machinery of \cite{KK2024} cannot be transferred directly; its role is taken by a compatible two-index recurrence on the
residue coefficients, driven by the very same gamma ratios. Second, the classical
stability properties of the Meijer $G$-function reduce to four elementary
insertion rules, which leave $N$ strictly invariant while Mellin convolution
merely adds it; since Meijer $G$ embeds with $N=0$, no composition of the
classical transform calculus applied to hypergeometric input can leave the
stratum $N=0$. This explains why every application known in the literature has
$N=0$, and indicates where an $N\ne0$ example would have to come from. We
distinguish carefully between the kernel-level calculus, which is unconditional,
and the analytic transform identities, which we establish for the operators and
contour classes where the required interchange is legitimate.

Representative identities and parameter instances are validated numerically;
most algebraic and contour identities reach $10^{-17}$ or better, while the
quadrature/differentiation checks are limited to $10^{-8}$--$10^{-13}$. For this we construct an evaluator for $G(z;\tau)$ at
general $\tau>0$ --- to the best of our knowledge no widely used numerical
library provides one; \texttt{mpmath.barnesg} is the case $\tau=1$
--- which requires expansion coefficients not printed in the literature. Along
the way we record a sign correction to \cite{KK2024}.
\end{abstract}

\maketitle

\section{Introduction}

\subsection{Background}
The Meijer $G$-function
\begin{equation}\label{eq:meijerG}
G^{m,n}_{p,q}\!\left(\begin{matrix}a_1,\dots,a_p\\ b_1,\dots,b_q\end{matrix}\,
\middle|\,z\right)
=\frac{1}{2\pi i}\int_{L}
\frac{\prod_{j=1}^{m}\Gamma(b_j-s)\prod_{j=1}^{n}\Gamma(1-a_j+s)}
     {\prod_{j=m+1}^{q}\Gamma(1-b_j+s)\prod_{j=n+1}^{p}\Gamma(a_j-s)}\,z^{s}\,ds
\end{equation}
owes its universality to a single structural fact: its Mellin transform is a
product and quotient of gamma functions, and multiplying or dividing such an
object by further gamma factors returns an object of the same type. Equivalently,
a Mellin transform $M$ arising from a $G$-function obeys a first-order difference
equation $M(s+1)=R(s)M(s)$ with $R$ \emph{rational}.

Over the last fifteen years a family of probabilistic objects has emerged whose
Mellin transforms obey the same kind of difference equation with $R$ a
\emph{ratio of gamma functions} rather than a rational function
\cite{Kuznetsov2011,KuznetsovKwasnicki2018,KuznetsovPardo2013,Ostrovsky2013,%
JedidiSimonWang2018}. Solving such a recursion requires a function one rung
above $\Gamma$, namely the Barnes double gamma function $G(z;\tau)$
\cite{Barnes1899,Barnes1901}, which satisfies
\begin{equation}\label{eq:quasiper}
G(z+1;\tau)=\Gamma\!\left(\tfrac{z}{\tau}\right)G(z;\tau),
\qquad
G(z+\tau;\tau)=(2\pi)^{\frac{\tau-1}{2}}\tau^{\frac12-z}\,\Gamma(z)\,G(z;\tau),
\end{equation}
i.e.\ whose shift multiplier is itself a gamma function.

Karp and Kuznetsov \cite{KK2024} unified these examples by replacing every
$\Gamma$ in \eqref{eq:meijerG} with a $G(\cdot;\tau)$. Writing
$\va=(a_1,\dots,a_p)\in\C^p$, $\vb=(b_1,\dots,b_q)\in\C^q$,
$0\le m\le q$, $0\le n\le p$, and
\begin{equation}\label{eq:Gscript}
\Gs(s)=\Gs^{m,n}_{p,q}\!\left(\begin{matrix}\va\\ \vb\end{matrix}\,\middle|\,s;\tau\right)
:=\frac{\prod_{j=1}^{m}G(b_j-s;\tau)\prod_{j=1}^{n}G(1+\tau-a_j+s;\tau)}
       {\prod_{j=m+1}^{q}G(1+\tau-b_j+s;\tau)\prod_{j=n+1}^{p}G(a_j-s;\tau)},
\end{equation}
\begin{equation}\label{eq:phi}
\phi(s):=\Gs(s)\,e^{\pi\alpha s^{2}/\tau},\qquad \alpha\in\C,
\end{equation}
they define the \emph{Meijer--Barnes $K$-function} by
\begin{equation}\label{eq:Kdef}
K^{m,n}_{p,q}\!\left(\begin{matrix}\va\\ \vb\end{matrix}\,\middle|\,z;\tau,\alpha\right)
:=\frac{1}{2\pi i}\int_{\gamma}\phi(s)\,z^{-s}\,ds .
\end{equation}
Because $\Gamma(s)=(2\pi)^{(1-\tau)/2}\tau^{s-1/2}G(s+\tau;\tau)/G(s;\tau)$,
every Meijer $G$-function --- hence every ${}_pF_q$ --- is a special case of
\eqref{eq:Kdef}.

The behaviour of \eqref{eq:Kdef} is controlled by the integer
\begin{equation}\label{eq:Nmunuxi}
N:=2(m+n)-p-q,
\end{equation}
together with the auxiliary quantities $\mu,\nu,\xi$ recalled in
\S\ref{sec:prelim}. The parameter $N$ is exactly twice the classical Meijer
convergence index $\delta=m+n-\tfrac{p+q}{2}$: for \eqref{eq:meijerG} the kernel
decays on a vertical line like $e^{-\pi\delta|t|}$, whereas for \eqref{eq:Gscript}
it behaves like $\exp\!\big(\tfrac{N}{2\tau}r^{2}\ln r\cos 2\theta\big)$,
$s=re^{i\theta}$.

\subsection{The gap}
Karp and Kuznetsov list seven parameter regimes (log-quadratic, quadratic,
upper-/lower-balanced, balanced, linear, logarithmic), give a table of admissible
contour angles, and then \emph{exclude} several parameter sets, the first being
$N<0$. Their reason is worth quoting:
\begin{quote}\small
``We can choose four contours of integration $L_{-\eps,\eps}$,
$L_{-\eps,\pi-\eps}$, $L_{-\pi+\eps,\eps}$ and $L_{-\pi+\eps,\pi-\eps}$. These
contours can not be transformed into each other\ldots{} Thus in the case $N<0$
we can have up to four different functions that could be defined via (26). While
this case may be of interest, we decided not to include it in the present
paper.'' \cite[p.~235]{KK2024}
\end{quote}
So the obstruction is not divergence. It is that the integral converges
\emph{too well}: no single contour is forced, and \eqref{eq:Kdef} is ambiguous.

\subsection{Contributions}
We resolve the ambiguity by defining all four contour functions and deriving a
universal relation among them, together with the structural relations we can
currently reach. Whether a second, independent relation exists is left open
(item~\ref{op:dim} of \S\ref{sec:open}).

\begin{enumerate}[leftmargin=2.1em,itemsep=3pt]
\item[(C1)] \textbf{Geometry (\S\ref{sec:geometry}).} For $N<0$ the decay sectors
are the two \emph{horizontal} ones, $|\arg s|<\pi/4$ and $|\arg s-\pi|<\pi/4$;
and, because $l\tau+k$ is real, each of the two pole sets $\Lp,\Lm$ lies on
\emph{finitely many horizontal lines}, one comb running east and one west. These
two facts together force exactly four homotopy classes and make the bookkeeping
finite.

\item[(C2)] \textbf{Unconditional convergence (Theorem~\ref{thm:A}).} For $N<0$
every additional convergence restriction imposed in the $N\ge0$ definition of
\cite{KK2024} disappears: no condition on
$\Rea\alpha$, on $\Rea\nu$, or on $\arg\mu$ is needed, and the integral
converges absolutely for all $z$ on the Riemann surface of the logarithm and all
$\alpha\in\C$, defining an entire function of $w=\tfrac{1}{2\pi}\log z$.

\item[(C3)] \textbf{Residue expansions (Theorem~\ref{thm:B}).}
$\Kr=-\sum_{\Lp}\Res$ and $\Kl=+\sum_{\Lm}\Res$, as identities rather than
asymptotic expansions --- under the closing-arc hypothesis
\Harc of \S\ref{sec:arcs}, which is what makes the closing arcs
negligible in the one region the kernel asymptotic of \cite{KK2024} does not
cover. Two forms of that hypothesis are given, a supremum form and a weaker
$L^{1}$ form; the latter needs no prescribed sequence of radii, because
Lemma~\ref{lem:adaptive} selects one from each annulus. The residues are computed in closed form from a formula for $G'$ at every
lattice zero (Lemma~\ref{lem:Gprime}).

\item[(C4)] \textbf{One universal relation (Theorem~\ref{thm:C}).}
$\Ku+\Kd=\Kr+\Kl$, proved unconditionally by realising all four classes on rays
that stay inside the decay sectors. Hence the span has dimension at most three;
for real data $\Kd=\overline{\Ku}$ with $\Rea\Ku$ forced, leaving at most three
real degrees of freedom (Corollary~\ref{cor:real}). Whether the dimension is
exactly three we leave open; see Remark~\ref{rem:three}.

\item[(C5)] \textbf{Stronger vanishing (Theorem~\ref{thm:D}).}
$\Lp=\varnothing\Rightarrow\Kr\equiv0$ and $\Lm=\varnothing\Rightarrow\Kl\equiv0$
\emph{identically}, where the $N=0$ analogue \cite[Thm.~3]{KK2024} only gives
vanishing on a half-line.

\item[(C6)] \textbf{Transformations persist (Theorem~\ref{thm:E}).} The shift and
modular laws hold for each of the four separately; the inversion law interchanges
$\Kr\leftrightarrow\Kl$ and fixes $\Ku,\Kd$. Since $N$ is invariant under
$\tau\mapsto1/\tau$, the class $N<0$ is closed under the modular law.

\item[(C7)] \textbf{No admissible vertical contour, but a recurrence
(Theorems~\ref{thm:F1}, \ref{thm:F}).} The vertical line lies in a growth sector,
so no vertical Mellin--Barnes representation exists and the proofs of
\cite[Thms.~6--7]{KK2024} do not transfer. Their replacement is a compatible
two-index recurrence on the residue coefficients driven by the same gamma ratios
$F,H$ that appear in the proof of \cite[Thm.~7]{KK2024}. We are careful not to
claim that $K^{\bullet}$ has no Mellin transform; see
Remark~\ref{rem:notclaimed}.

\item[(C8)] \textbf{The stability properties, and a strict invariant
(\S\ref{sec:closure}).} Four elementary insertion rules
(Lemma~\ref{lem:insert}) reduce Erd\'elyi--Kober and Riemann--Liouville
fractional integrals and derivatives, the Laplace and Euler transforms,
differentiation of every order and Mellin
convolution to a single mechanism at the level of the \emph{kernel}
(Theorem~\ref{thm:kernelcalc}, unconditional). The corresponding \emph{analytic}
identities are then established operator by operator and class by class
(Theorem~\ref{thm:closure}); two of them we can only record formally.
Differentiation is given to all orders in closed form \eqref{eq:derivk}, and
Riemann--Liouville fractional \emph{derivatives} are obtained by
composing that with the Erd\'elyi--Kober rule, with no continuation in the order
(Theorem~\ref{thm:RL}). The calculus leaves $N$ invariant and Mellin convolution
adds it, whence the ``$N=0$ trap'' of Corollary~\ref{cor:Ntrap}.

\item[(C9)] \textbf{Numerics and a corrigendum (\S\ref{sec:numerics},
\S\ref{sec:errata}).} We build an evaluator for $G(z;\tau)$ at general $\tau>0$,
for which we must derive Binet coefficients not printed in the literature
(Lemma~\ref{lem:binet}), and validate representative identities and parameter
instances, typically to $10^{-17}$ or better, with quadrature/differentiation
checks at $10^{-8}$--$10^{-13}$ --- including a finite-radius probe of the closing-arc
bound \eqref{eq:arcbound}. One sign correction to \cite{KK2024} is
recorded.
\end{enumerate}

\subsubsection*{Relation to the Fox--Barnes $J$-function}
While this work was in preparation, Vaz \cite{Vaz2025} introduced the
\emph{Fox--Barnes $J$-function}, explicitly ``in analogy with the generalization
of the Meijer $G$-function proposed by Karp and Kuznetsov''. Its kernel
\cite[Eq.~(8)]{Vaz2025} is again a ratio of products of double gammas, but each
factor carries its own slope,
$G(b_i+\beta_i s;\tau)$ and $G(1+\tau-a_i-\alpha_i s;\tau)$ with
$\alpha_i,\beta_i>0$, so it stands to the $K$-function as the Fox $H$-function
stands to the Meijer $G$-function; a compensation parameter $\varepsilon$ plays
the role of our $\alpha$. That paper establishes existence conditions, explains
how to select a contour from the parameters, exhibits the Fox $H$-function as a
special case, proves inversion, scaling, modular and differentiation rules, and
computes the Laplace transform of the Kilbas--Saigo function. It is therefore
the natural home of Remark~\ref{rem:foxH} below, and any statement here about a
``double-gamma kernel with arbitrary slopes'' should be read as referring to
\cite{Vaz2025}.

Two points of contact deserve to be stated precisely, since both bear on what
is new here.

\emph{The governing quantity.} In the $J$-function the exponent of the leading
$R^{2}\ln R$ term is controlled not by $N$ but by
$\Delta_{2}=\sum_{i\le m}\beta_i^{2}+\sum_{i\le n}\alpha_i^{2}
-\sum_{i>m}\beta_i^{2}-\sum_{i>n}\alpha_i^{2}$
\cite[Eq.~(15)]{Vaz2025}, which reduces to our $N$ when every slope equals $1$;
Vaz calls $\Delta_{2}\ne0$ \emph{hyper-unbalanced}. (The integer
$N=2(m+n)-(p+q)$ is also defined there \cite[Eq.~(14)]{Vaz2025}, but it enters
only the coefficient of $\ln R$.) The regime $\Delta_{2}<0$ is thus the
$J$-function analogue of the case treated below. Vaz records
\cite[\S3.1(i).2]{Vaz2025} that in it one may take either the left loop
$\mathcal{C}_{-\infty}$ or the right loop $\mathcal{C}_{+\infty}$, and the two
diagonal contours are explicitly set aside \cite[\S3.1(ii)]{Vaz2025}. He then
notes that ``different contours define different functions if they cannot be
continuously deformed into each other'' and, for that reason, adopts the
vertical contour $\mathcal{C}_{ic\infty}$ as the primary definition
\cite[\S3.2]{Vaz2025} --- so that all four clauses of the resulting
vertical-line condition \cite[Def.~1]{Vaz2025}, and every analytic result
resting on it, require $\Delta_{2}\ge0$. The multi-branch structure of the
negative regime is therefore left open there exactly as it is in
\cite{KK2024}; the one trace of it is Example~4 of \cite[\S3.2]{Vaz2025},
where the author observes that only one of the two loops gives a non-null
function --- an instance of our Theorem~\ref{thm:D}.

\emph{The transform calculus.} Here the overlap is real and we make no priority
claim. The two multiplier identities behind our insertion rules
(Lemma~\ref{lem:insert}) already appear, used \emph{ad hoc}, in
\cite[Eqs.~(36)--(38) and proof of Prop.~7]{Vaz2025}, and two of the operators
we treat are there in greater slope generality: differentiation
\cite[Prop.~5]{Vaz2025} --- with the same prefactor $\tau^{k}$ and the same
enlargement $(p,q)\mapsto(p+2k,q+2k)$ --- and the Laplace transform
\cite[Prop.~7]{Vaz2025}. What we add on this side is the organisation of the
rules into a closed calculus, the Erd\'elyi--Kober, Euler and Mellin-convolution
cases, and above all the invariance of $N$ under every rule
(Theorem~\ref{thm:Ninvariant}) with its consequence, the ``$N=0$ trap'' of
Corollary~\ref{cor:Ntrap}. That corollary is independently corroborated by
\cite[Eq.~(32)]{Vaz2025}: reducing a $J$-function to a Fox $H$-function forces
$\Delta_{2}=\Theta_{2}=\Omega_{2}=N=0$.

The present paper is thus complementary rather than more general. We stay inside
the Karp--Kuznetsov kernel --- one common $\tau$, all slopes $\pm1$ --- and
address the one regime that \cite{KK2024} excludes, where the contour is
\emph{not} determined by the parameters: the classification of the admissible
contours, the relations among the resulting functions, the residue expansions
and their recurrence, and the invariance of $N$. Those are the contributions
claimed here.

\subsection{Relation to classical Mellin--Barnes theory}
For the Meijer $G$- and Fox $H$-functions the analogous phenomenon is classical:
when the vertical-line integral diverges one uses loop contours
$\mathcal{L}_{+\infty}$ or $\mathcal{L}_{-\infty}$, and these give genuinely
different functions with different existence conditions
\cite{MathaiSaxena1973,KilbasSaigo2004,ParisKaminski2001}. The $N<0$ $K$-function
world is however \emph{richer} than the classical one. Because $G(\cdot;\tau)$
has order two, the decay in the horizontal sectors is of order
$e^{-cr^{2}\ln r}$ rather than $e^{-c|t|}$; consequently all four contours
converge simultaneously and for all $z$, so one really obtains four globally
defined functions rather than analytic continuations of one another across
$|z|=1$.

\section{Preliminaries}\label{sec:prelim}

Throughout, $\tau>0$; $\C^{\pm}$ are the open upper/lower half-planes;
$\log$ is the principal branch, and $z$ ranges over the Riemann surface of the
logarithm unless stated otherwise. For $\eps\in(0,\pi)$ put
$S^{\pm}_{\eps}=\{s\ne0:\eps\le\pm\arg s\le\pi-\eps\}$.

\subsection{The Barnes double gamma function}
$G(z;\tau)$ is the entire function of $z$ of order two given by the Weierstrass
product \cite[Eq.~(3)]{KK2024}, normalised by $G(1;\tau)=1$ and satisfying
\eqref{eq:quasiper}. Its zeros are the lattice points
\begin{equation}\label{eq:zeros}
z=-(l\tau+k),\qquad l,k\in\Z_{\ge0},
\end{equation}
and they are all simple if and only if $\tau\notin\Q$. We shall use the
modular transformation \cite[Eq.~(5)]{KK2024}
\begin{equation}\label{eq:modular}
G(z;\tau)=(2\pi)^{\frac{z}{2}\left(1-\frac1\tau\right)}\,
\tau^{\frac{z-z^{2}}{2\tau}+\frac{z}{2}-1}\,
G\!\left(\tfrac{z}{\tau};\tfrac1\tau\right),
\end{equation}
and the representation of $\Gamma$ that embeds Meijer $G$ into $K$,
\begin{equation}\label{eq:GammaviaG}
\Gamma(s)=(2\pi)^{\frac{1-\tau}{2}}\tau^{s-\frac12}\,
\frac{G(s+\tau;\tau)}{G(s;\tau)} .
\end{equation}

Two elementary facts will be used repeatedly. The first is immediate from the
Weierstrass product, in which the only factor vanishing at $z=0$ is the
prefactor $z/\tau$.

\begin{lemma}\label{lem:Gzero}
$G(z;\tau)=\dfrac{z}{\tau}+O(z^{2})$ as $z\to0$; in particular
$G'(0;\tau)=1/\tau$.
\end{lemma}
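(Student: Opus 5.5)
The plan is to read the claim off the Weierstrass product of \cite[Eq.~(3)]{KK2024}. I will write it in the form
\[
G(z;\tau)=\frac{z}{\tau}\,e^{A(\tau)z/\tau+B(\tau)z^{2}/(2\tau)}
\prod_{(l,k)\ne(0,0)}\Bigl(1+\frac{z}{l\tau+k}\Bigr)
\exp\Bigl(-\frac{z}{l\tau+k}+\frac{z^{2}}{2(l\tau+k)^{2}}\Bigr),
\]
where $l,k\ge0$ and $A,B$ are constants fixed by the normalisation $G(1;\tau)=1$. The essential point is that the lattice point $(l,k)=(0,0)$ has been removed from the product and appears only through the prefactor $z/\tau$.

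I will then set $P(z)$ equal to everything after the prefactor $z/\tau$, so that $G(z;\tau)=(z/\tau)P(z)$. The function $P$ is entire, as a product of an exponential of a polynomial with a canonical product of genus two. The series $\sum|l\tau+k|^{-3}$ converges in two dimensions, and this is what makes the product converge locally uniformly.

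Next I check that $P(0)=1$. Every factor of the product equals $1$ at $z=0$, and so does the exponential prefactor. Local uniform convergence then lets me evaluate the product termwise at $z=0$. Since $P$ is entire, $P(z)=1+O(z)$, and therefore
\[
G(z;\tau)=\frac{z}{\tau}\bigl(1+O(z)\bigr)=\frac{z}{\tau}+O(z^{2}).
\]
Reading off the coefficient of $z$ gives $G'(0;\tau)=1/\tau$.

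The only delicate point is the normalisation convention. I must confirm that in \cite[Eq.~(3)]{KK2024} the prefactor is exactly $z/\tau$, not, for example, $z$ with the $1/\tau$ absorbed into the exponential. If the convention differs, I would instead derive the constant from the quasi-periodicity \eqref{eq:quasiper}. Setting $z\to0$ in $G(z+1;\tau)=\Gamma(z/\tau)G(z;\tau)$ gives
\[
G(z;\tau)=\frac{G(z+1;\tau)}{\Gamma(z/\tau)}.
\]
Near $z=0$ we have $1/\Gamma(z/\tau)=z/\tau+O(z^{2})$, and $G(z+1;\tau)=G(1;\tau)+O(z)=1+O(z)$. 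Hence $G(z;\tau)=z/\tau+O(z^{2})$ again.

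This second route is independent of any product convention, since it uses only \eqref{eq:quasiper} and $G(1;\tau)=1$. I would present it as the actual proof and mention the product only as a cross-check.
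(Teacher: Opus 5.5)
Your proposal is correct. Your first route is exactly the paper's proof. That proof is the one-line observation that in the Weierstrass product of \cite[Eq.~(3)]{KK2024} the only factor vanishing at $z=0$ is the prefactor $z/\tau$. The product you wrote down has that normalisation, so the convention worry does not arise.

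The route you propose to present instead is genuinely different and also sound. It uses only:
\begin{itemize}
\item the first relation in \eqref{eq:quasiper};
\item the normalisation $G(1;\tau)=1$;
\item the fact that $G$ is entire;
\item the expansion $1/\Gamma(w)=w+O(w^{2})$.
\end{itemize}
The identity $G(z;\tau)=G(z+1;\tau)/\Gamma(z/\tau)$ holds for small $z\ne0$ and extends to $z=0$ by continuity.

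The product route is more immediate. It also gives the full factorisation $G=(z/\tau)P$ with $P$ entire and $P(0)=1$. Your route does not depend on how the product is normalised. It shows that the value $1/\tau$ is not a convention but is forced by \eqref{eq:quasiper} together with $G(1;\tau)=1$.

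Your route also fits the paper's next step. The proof of Lemma~\ref{lem:Gprime} uses only the quasi-periodicity and the present lemma. With your argument, both lemmas rest on the functional equations, the normalisation and entireness alone, with no appeal to the explicit product. The same then holds for the residue formula \eqref{eq:resplus} built on them.
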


\begin{lemma}\label{lem:Gprime}
Let $\tau\notin\Q$ and $l,k\in\Z_{\ge0}$, and set $\rho=l\tau+k$. Then the zero
of $G(\cdot;\tau)$ at $-\rho$ is simple and
\begin{equation}\label{eq:Gprime}
G'(-\rho;\tau)=\frac{1}{\tau\,D_{\tau}(l,k)},\qquad
D_{\tau}(l,k)=\prod_{j=0}^{k-1}\Gamma\!\left(\frac{j-k}{\tau}-l\right)
\prod_{i=0}^{l-1}(2\pi)^{\frac{\tau-1}{2}}\tau^{\frac12+(l-i)\tau}\,
\Gamma\big((i-l)\tau\big).
\end{equation}
\end{lemma}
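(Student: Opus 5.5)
The plan is to transport the known behaviour of $G(\cdot;\tau)$ at the origin (Lemma~\ref{lem:Gzero}) to the lattice point $-\rho=-(l\tau+k)$ by repeated use of the quasi-periodicity relations \eqref{eq:quasiper}, read backwards. Concretely, I will write $G(z-\rho;\tau)=G(z;\tau)/D(z)$ with an explicit function $D$ that is holomorphic and non-vanishing near $z=0$. Both claims then follow at once:
\[
G'(-\rho;\tau)=\frac{G'(0;\tau)}{D(0)}=\frac{1}{\tau D(0)},
\]
and the zero at $-\rho$ is simple because $G$ has a simple zero at $0$ and $D(0)\neq0$.

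The order of the shifts matters for matching the stated form of $D_\tau(l,k)$. I will first climb from $z-\rho$ to $z-l\tau$ by $k$ unit steps. Applying $G(w+1;\tau)=\Gamma(w/\tau)G(w;\tau)$ with $w=z-l\tau-k+j$ for $j=0,\dots,k-1$ gives
\[
G(z-l\tau;\tau)=\prod_{j=0}^{k-1}\Gamma\!\Big(\tfrac{z-k+j}{\tau}-l\Big)\,G(z-\rho;\tau).
\]
I will then climb from $z-l\tau$ to $z$ by $l$ steps of size $\tau$. Applying the second relation in \eqref{eq:quasiper} with $w=z-(l-i)\tau$ for $i=0,\dots,l-1$ gives the multipliers
\[
(2\pi)^{\frac{\tau-1}{2}}\,\tau^{\frac12-z+(l-i)\tau}\,\Gamma\big(z-(l-i)\tau\big).
\]
Let $D(z)$ denote the product of all these multipliers. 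Setting $z=0$ reproduces exactly the expression $D_\tau(l,k)$ in \eqref{eq:Gprime}.

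The only point requiring care is that $D$ is holomorphic and non-zero at $z=0$, i.e.\ that no Gamma argument at $z=0$ is a non-positive integer; $\Gamma$ has no zeros, so poles are the only danger. This is where $\tau\notin\Q$ enters.
\begin{itemize}
\item For the first family, $(j-k)/\tau-l\in\Z$ with $j-k\in\{-k,\dots,-1\}$ would force $\tau\in\Q$.
\item For the second family, $(i-l)\tau\in\Z$ with $i-l\in\{-l,\dots,-1\}$ would likewise force $\tau\in\Q$.
\end{itemize}
So all factors are finite and non-zero in a neighbourhood of $0$.

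Differentiating $G(z-\rho;\tau)=G(z;\tau)/D(z)$ at $z=0$ and using $G(0;\tau)=0$ kills the term involving $D'$. With Lemma~\ref{lem:Gzero} this yields \eqref{eq:Gprime}. I expect no genuine obstacle; the main risk is index bookkeeping, namely choosing the shift order (unit steps first, then $\tau$-steps) so that the Gamma arguments come out as $(j-k)/\tau-l$ and $(i-l)\tau$ rather than a mixed form.
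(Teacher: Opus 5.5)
Your proposal is correct and is essentially the paper's proof. The paper also applies the unit-step relation $k$ times and then the $\tau$-step relation $l$ times, which gives $G(z+\rho;\tau)=D_\tau(l,k;z)\,G(z;\tau)$. It uses $\tau\notin\Q$ to rule out Gamma poles at the shifted point, and reads off $G'(-\rho;\tau)$ from Lemma~\ref{lem:Gzero}. The only difference is notation: you centre the variable at $0$ and differentiate, while the paper writes $z=-\rho+u$ and expands to first order in $u$.
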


\begin{proof}
Iterating the first relation in \eqref{eq:quasiper} $k$ times and then the second
$l$ times gives $G(z+\rho;\tau)=D_{\tau}(l,k;z)\,G(z;\tau)$ with
\[
D_{\tau}(l,k;z)=\prod_{j=0}^{k-1}\Gamma\!\left(\frac{z+j}{\tau}\right)
\prod_{i=0}^{l-1}(2\pi)^{\frac{\tau-1}{2}}\tau^{\frac12-z-k-i\tau}\,
\Gamma(z+k+i\tau).
\]
Put $z=-\rho+u$. The gamma factors are evaluated at
$\tfrac{j-k}{\tau}-l+\tfrac{u}{\tau}$ $(0\le j\le k-1)$ and at
$(i-l)\tau+u$ $(0\le i\le l-1)$; since $\tau\notin\Q$ and $j<k$, $i<l$, none of
these arguments is a non-positive integer at $u=0$, so
$D_{\tau}(l,k):=D_{\tau}(l,k;-\rho)$ is finite and non-zero. Therefore
$G(-\rho+u;\tau)=G(u;\tau)/D_{\tau}(l,k;-\rho+u)$, and Lemma~\ref{lem:Gzero}
gives $G(-\rho+u;\tau)=u/(\tau D_{\tau}(l,k))+O(u^{2})$.
\end{proof}

\subsection{A Binet representation with explicit coefficients}
Numerical work requires evaluating $G(z;\tau)$ for general $\tau$, for which no
implementation exists. The starting point is the Binet-type representation
\cite[Prop.~1]{KK2024}, but that proposition prints only three of its six
coefficients. We record all of them.

\begin{lemma}\label{lem:binet}
For $\Rea z>0$ and $\tau>0$,
\begin{equation}\label{eq:binet}
\ln G(z;\tau)=P_{\tau}(z)\ln z+Q_{\tau}(z)-\int_{0}^{\infty}e^{-zx}f_{3}(x;\tau)\,dx,
\end{equation}
where, with $f(x)=\dfrac{x^{2}}{(1-e^{-x})(1-e^{-\tau x})}$ and
$f_{3}(x)=x^{-3}\big(f(x)-f(0)-f'(0)x-\tfrac12 f''(0)x^{2}\big)$,
\begin{align}
P_{\tau}(z)&=\frac{z^{2}}{2\tau}-\frac{1+\tau}{2\tau}\,z
            +\frac{1+3\tau+\tau^{2}}{12\tau}
            \;=\;\tfrac12 B_{2,2}(z;1,\tau),\label{eq:P}\\
Q_{\tau}(z)&=-\frac{\tfrac32+\ln\tau}{2\tau}\,z^{2}
   +\left(\frac{\ln(2\pi\tau)}{2}+\frac{1+\tau+\ln\tau}{2\tau}\right)z+b_{0}(\tau),
   \label{eq:Q}
\end{align}
and $b_{0}(\tau)$ is determined by $G(1;\tau)=1$. In particular
$b_{0}(1)=\zeta'(-1)-\tfrac12\ln 2\pi$.
\end{lemma}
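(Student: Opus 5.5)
The plan is to derive \eqref{eq:binet} from a Laplace-type integral for $\ln G$ and then fix the six coefficients by the functional equations \eqref{eq:quasiper}. Start from the double-gamma log-derivative: $\partial_z^3\ln G(z;\tau)$ is, up to sign, $\sum_{l,k\ge0}2(z+l\tau+k)^{-3}$, by differentiating the Weierstrass product three times (the polynomial factors of degree $\le2$ drop out). For $\Rea z>0$ write $(z+\rho)^{-3}=\tfrac12\int_0^\infty x^2e^{-(z+\rho)x}dx$ and sum the geometric series in $l,k$. This gives
\[
\partial_z^3\ln G(z;\tau)=\int_0^\infty e^{-zx}\,\frac{x^2}{(1-e^{-x})(1-e^{-\tau x})}\,dx=\int_0^\infty e^{-zx}f(x)\,dx,
\]
with the sign fixed so that it matches the behaviour of the $z^2\ln z/(2\tau)$ term. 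The integral is divergent at $x=0$ because $f(0)\ne0$, so subtract the Taylor polynomial $f(0)+f'(0)x+\tfrac12f''(0)x^2$. The remainder is exactly $x^3f_3(x)$, and the subtracted terms integrate to $f(0)/z+f'(0)/z^2+f''(0)/z^3$.

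Next I will integrate three times in $z$. Since $\int_0^\infty e^{-zx}x^3f_3(x)\,dx=-\partial_z^3\int_0^\infty e^{-zx}f_3(x)\,dx$, the integral term becomes $-\int_0^\infty e^{-zx}f_3\,dx$ up to a quadratic polynomial. The elementary terms give a third antiderivative $P(z)\ln z+(\text{polynomial})$ with $P'''=0$, and comparing the coefficients of $1/z,1/z^2,1/z^3$ forces
\[
P''(z)=f(0),\qquad P'(z)\ \text{determined by } f'(0),\qquad P(0)\ \text{by } \tfrac12 f''(0).
\]
Expanding $f$ with the Bernoulli series, $f(x)=\tau^{-1}\bigl(1+\tfrac{1+\tau}{2}x+\tfrac{1+3\tau+\tau^2}{12}x^2+\dots\bigr)$, which is the generating function of $B_{2,j}(\cdot;1,\tau)$. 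This yields \eqref{eq:P}, and in particular $P_\tau=\tfrac12B_{2,2}$. Near this step I have to track carefully how the antiderivatives of $z^{-1},z^{-2},z^{-3}$ produce $\ln z$ times a quadratic plus extra polynomial pieces; these pieces are absorbed into $Q_\tau$.

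The real work is identifying $Q_\tau$, which a priori is an arbitrary quadratic $c_2z^2+c_1z+b_0$. I plan to use the asymptotics as $z\to+\infty$, where the integral term is $O(1/z)$, and compare with \eqref{eq:quasiper}. Take $\ln G(z+1)-\ln G(z)=\ln\Gamma(z/\tau)$, expand the left side using \eqref{eq:binet} (the $P\ln z$ difference expands in powers of $z$ and $\ln z$), and expand the right side by Stirling: $\ln\Gamma(z/\tau)=(z/\tau-\tfrac12)\ln(z/\tau)-z/\tau+\tfrac12\ln2\pi+O(1/z)$. Matching the $z$ coefficient gives $c_2=-\frac{3/2+\ln\tau}{2\tau}$. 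Matching the constant term gives a linear equation for $c_1$ alone, which should produce the stated $c_1$. The terms $\ln z$ and $z\ln z$ serve as consistency checks on $P_\tau$.

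I expect the main obstacle to be this bookkeeping: the constant term mixes $c_2$, $c_1$ and the expansions of $P(z+1)\ln(z+1)$. As a cross-check I will run the same computation with the second relation in \eqref{eq:quasiper}, whose $\tau^{1/2-z}$ and $(2\pi)^{(\tau-1)/2}$ factors must reproduce the same $c_1$. Finally, $b_0$ is fixed by $G(1;\tau)=1$. At $\tau=1$, $G(z;1)$ is Barnes' $G$, and the known constant in its Binet formula gives $b_0(1)=\zeta'(-1)-\tfrac12\ln2\pi$. That match also confirms the normalisation.
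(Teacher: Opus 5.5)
Your proposal is correct. For the first half of the lemma it takes a genuinely different route from the paper.

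\textbf{What the paper does.} It does not derive the representation. It imports the existence of \eqref{eq:binet}, with \emph{some} quadratics $P_\tau,Q_\tau$, from \cite[Prop.~1]{KK2024}. It then obtains $a_2,a_1,b_2,b_1$ by matching the $z\ln z$, $\ln z$, $z$ and constant coefficients in the first functional equation. The constant term $a_0$ of $P_\tau$ is taken from the literature on the full asymptotic expansion of $\ln G$.

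\textbf{What you do differently.} You build the representation from the Weierstrass product, $\partial_z^3\ln G=\sum_{l,k\ge0}2(z+l\tau+k)^{-3}=\int_0^\infty e^{-zx}f(x)\,dx$. For $P=p_2z^2+p_1z+p_0$ one has $\partial_z^3(P\ln z)=2p_2/z-p_1/z^2+2p_0/z^3$. The triple integration therefore forces $p_2=f(0)/2$, $p_1=-f'(0)$, $p_0=f''(0)/2$. That is all of \eqref{eq:P}, constant term included.

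\textbf{What each route buys.} Yours is self-contained and takes no coefficient on trust. The cost is a few routine justifications:
\begin{itemize}
\item termwise differentiation of the product;
\item Fubini--Tonelli for swapping $\sum_{l,k}$ with $\int_0^\infty$, using the non-negative majorants $x^2e^{-(\Rea z+\rho)x}$;
\item boundedness of $f_3$ on $[0,\infty)$, so that $\int_0^\infty e^{-zx}f_3\,dx$ is $O(1/z)$ and may be differentiated under the integral sign.
\end{itemize}
The paper's route is shorter but rests on the citation. For $Q_\tau$ and $b_0$ your argument coincides with the paper's, and your constant-term matching does return the stated $b_1$.

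\textbf{Small corrections.}
\begin{itemize}
\item There is no sign ambiguity: $\partial_z^3\ln(z+\rho)=+2(z+\rho)^{-3}$ directly.
\item $\int_0^\infty e^{-zx}f(x)\,dx$ is not divergent at $x=0$, since $f(0)=1/\tau$ is finite. The Taylor subtraction is needed because the formal triple antiderivative $-\int_0^\infty e^{-zx}x^{-3}f(x)\,dx$ would diverge there.
\item $f$ generates $B_{2,j}(\cdot\,;1,\tau)$ at the point $1+\tau$, not at $z$. So the identification $P_\tau=\tfrac12B_{2,2}(z;1,\tau)$ needs the reflection $B_{2,j}(1+\tau-z)=(-1)^jB_{2,j}(z)$, or simply a direct check against the explicit quadratic.
\end{itemize}
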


\begin{proof}
The point of the proof is \emph{not} to guess a global identity from an
asymptotic match. Existence of a representation of the form \eqref{eq:binet}
with \emph{some} polynomials $P_\tau,Q_\tau$ of degree at most two is already
contained in \cite[Prop.~1]{KK2024}, which in turn rests on the Binet-type
formula for $\ln\Gamma_2$ in \cite{Ruijsenaars2000,Spreafico2009}. What is left
is to identify the six coefficients, and for that the asymptotic match is
legitimate: two polynomials of degree $\le2$ that agree to order $O(1/z)$
coincide.

So write $\Lambda(z)$ for the right-hand side of \eqref{eq:binet} with the
still-unknown $P_{\tau}(z)=a_2z^2+a_1z+a_0$ and $Q_\tau(z)=b_2z^2+b_1z+b_0$.
Since $f_3$ is
analytic near $0$ and bounded on $[0,\infty)$ up to a $O(1/x)$ tail, Watson's
lemma gives $\int_0^\infty e^{-zx}f_3(x)dx=O(1/z)$ as $z\to+\infty$. Substituting
$\Lambda$ into the first relation of \eqref{eq:quasiper} and expanding as
$z\to+\infty$,
\[
\Lambda(z+1)-\Lambda(z)=2a_2z\ln z+(a_2+a_1)\ln z+(a_2+2b_2)z
   +\big(\tfrac32a_2+a_1+b_2+b_1\big)+O(1/z),
\]
whereas
$\ln\Gamma(z/\tau)=\tfrac{z}{\tau}\ln z-\tfrac{z}{\tau}\ln\tau-\tfrac12\ln z
+\tfrac12\ln\tau-\tfrac{z}{\tau}+\tfrac12\ln2\pi+O(1/z)$. Matching the
coefficients of $z\ln z$, $\ln z$, $z$ and $1$ in turn yields
$a_2=\tfrac1{2\tau}$, $a_1=-\tfrac{1+\tau}{2\tau}$,
$b_2=-\tfrac{3/2+\ln\tau}{2\tau}$ and the stated $b_1$. The same computation
applied to the second relation of \eqref{eq:quasiper} reproduces these four
values identically, so the two functional equations are consistent and neither
determines $a_0$ or $b_0$ (both enter only at order $O(1/z)$ or as an additive
constant). The constant $a_0$ is fixed by comparison with the complete
asymptotic expansion of $\ln G$ \cite{BillinghamKing1997,AlexanianKuznetsov2023},
whose $\ln z$-coefficient is $\tfrac12 B_{2,2}(z;1,\tau)$, giving
$a_0=\tfrac{1+3\tau+\tau^{2}}{12\tau}$; at $\tau=1$ this is $\tfrac5{12}$,
consistent with the classical Barnes-$G$ expansion. (This is the one coefficient
we take from the literature rather than derive; it is confirmed independently to
$22$ digits in \S\ref{sec:numerics}.) Finally $b_0$ is fixed by
$\Lambda(1)=\ln G(1;\tau)=0$, i.e.\ $b_{0}=\int_0^\infty e^{-x}f_3(x)dx-b_2-b_1$.
At $\tau=1$ the classical expansion of $\ln G(z+1)$ has constant term
$\zeta'(-1)$, whence $b_0(1)=\zeta'(-1)-\tfrac12\ln2\pi$.
\end{proof}

\begin{remark}
Both $a_0$ and $b_0(1)$ are easy to get wrong: the frequently quoted constant
$\zeta'(-1)+\tfrac1{12}$ belongs to a different normalisation of the Barnes
$G$-function. The values in Lemma~\ref{lem:binet} are confirmed to $22$ digits
against \texttt{mpmath.barnesg} in \S\ref{sec:numerics}.
\end{remark}

\subsection{The auxiliary parameters}
Following \cite[Eq.~(14)]{KK2024} we set, besides $N$ in \eqref{eq:Nmunuxi},
\begin{equation}\label{eq:munuxi}
\nu=\sum_{j=1}^{p}a_j-\sum_{j=1}^{q}b_j,\quad
\mu=\sum_{j=1}^{n}a_j-\sum_{j=n+1}^{p}a_j+\sum_{j=1}^{m}b_j-\sum_{j=m+1}^{q}b_j,
\end{equation}
and $\xi$ defined as $\mu$ with each parameter squared. We also write
\begin{equation}\label{eq:lambdas}
\lambda_{\sharp}=\max_{m<j\le q}\Rea b_j-\tau-1,\qquad
\lambda_{\flat}=\min_{n<j\le p}\Rea a_j,
\end{equation}
with the conventions $\lambda_\sharp=-\infty$ if $m=q$ and
$\lambda_\flat=+\infty$ if $n=p$.

\section{\texorpdfstring{The geometry of the case $N<0$}{The geometry of the case N<0}}\label{sec:geometry}

\subsection{The pole sets are horizontal combs}
Since $G(\cdot;\tau)$ is entire, the poles of $\Gs$ come only from zeros of the
denominator factors. By \eqref{eq:zeros},
$G(a_j-s;\tau)=0$ iff $s\in a_j+\{l\tau+k\}$ and
$G(1+\tau-b_j+s;\tau)=0$ iff $s\in b_j-\{(l{+}1)\tau+(k{+}1)\}$. This is
\cite[Eq.~(13)]{KK2024}:
\begin{equation}\label{eq:Lambdas}
\Lp=\{a_j+l\tau+k:\ n<j\le p,\ l,k\ge0\},\qquad
\Lm=\{b_j-l\tau-k:\ m<j\le q,\ l,k\ge1\},
\end{equation}
and $\Gs$ is meromorphic with \emph{pole set contained in} $\Lp\cup\Lm$. The
inclusion can be strict: a numerator factor may vanish at a point of
$\Lp\cup\Lm$ and cancel the pole. This is not a pathology --- \S\ref{sec:closure}
shows that the transform calculus \emph{always} produces such cancellations
(Remark~\ref{rem:degenerate}) --- so we state it as an inclusion throughout and
impose \eqref{eq:generic} whenever simplicity of the poles is actually used.

The following observation is elementary but is what makes the whole
classification finite; it does not appear in \cite{KK2024}.

\begin{proposition}\label{prop:combs}
Let $\tau>0$. Then $\Lp$ is contained in the $p-n$ horizontal lines
$\Ima s=\Ima a_j$ and satisfies $\Rea s\ge\lambda_\flat$ with
$\Rea s\to+\infty$; and $\Lm$ is contained in the $q-m$ horizontal lines
$\Ima s=\Ima b_j$ and satisfies $\Rea s\le\lambda_\sharp$ with
$\Rea s\to-\infty$.
\end{proposition}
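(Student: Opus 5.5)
The plan is to read everything off the explicit description \eqref{eq:Lambdas} of the two pole sets, using only that $\tau>0$ is real and $l,k$ are integers. Take first a point $s=a_j+l\tau+k\in\Lp$ with $n<j\le p$ and $l,k\ge0$. Since $l\tau+k$ is a real number, $\Ima s=\Ima a_j$. Hence $\Lp$ lies in the union of the $p-n$ horizontal lines $\Ima s=\Ima a_j$; some of these lines may coincide, so $p-n$ is an upper bound on their number. For the real part, $l\tau+k\ge0$ gives $\Rea s=\Rea a_j+l\tau+k\ge\Rea a_j\ge\min_{n<j\le p}\Rea a_j=\lambda_\flat$. If $n=p$, the set $\Lp$ is empty and all three assertions hold vacuously, which is consistent with the convention $\lambda_\flat=+\infty$.

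The statement ``$\Rea s\to+\infty$'' I will read as follows: every bounded region contains only finitely many points of $\Lp$. Equivalently, $\Lp$ has no finite accumulation point, and along any enumeration of $\Lp$ the real parts tend to $+\infty$. To check this, fix $R$ and count the points with $\Rea s\le R$. That inequality forces $l\tau+k\le R-\Rea a_j$. Because $\tau>0$, this allows only finitely many pairs $(l,k)\in\Z_{\ge0}^2$, namely at most $(1+(R-\Rea a_j)/\tau)(1+R-\Rea a_j)$ of them. Since there are only finitely many indices $j$, the set $\{s\in\Lp:\Rea s\le R\}$ is finite. The same estimate shows that every line $\Ima s=\Ima a_j$ carries infinitely many points of $\Lp$ (take $l=0$ and let $k\to\infty$), so $\Lp$ is genuinely a comb running east.

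For $\Lm$ the argument is the mirror image. A point $s=b_j-l\tau-k$ with $m<j\le q$ and $l,k\ge1$ satisfies $\Ima s=\Ima b_j$. Its real part is $\Rea s=\Rea b_j-l\tau-k\le\Rea b_j-\tau-1\le\lambda_\sharp$, where the minimum over $l,k\ge1$ is attained at $l=k=1$. The finiteness count is the same, with $R$ replaced by $-R$.

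There is no real obstacle here. The only points that need care are the quantifier behind ``$\Rea s\to+\infty$'', which I make precise as finiteness in every left half-plane $\{\Rea s\le R\}$ (respectively every right half-plane for $\Lm$), and the empty cases $n=p$ and $m=q$, which are covered by the stated conventions. One could also remark that the lines for different $j$ may coincide, but nothing beyond the inclusion is claimed.
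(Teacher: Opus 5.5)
Your proof is correct and follows the same route as the paper's. The paper simply observes that $l\tau+k$ is real and nonnegative, so imaginary parts are unchanged, and reads the real-part bounds off the definitions of $\lambda_\flat$ and $\lambda_\sharp$; your explicit finiteness count in the half-planes $\{\Rea s\le R\}$ additionally makes precise the clause $\Rea s\to\pm\infty$, which the paper leaves implicit.
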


\begin{proof}
$l\tau+k\in\R_{\ge0}$ for $\tau>0$ and $l,k\in\Z_{\ge0}$, so translating $a_j$
(resp.\ $b_j$) by $\pm(l\tau+k)$ never changes the imaginary part. The stated
real-part bounds are \eqref{eq:lambdas}.
\end{proof}

Thus $\Lp$ is a comb running east and $\Lm$ a comb running west, each supported
on finitely many horizontal lines. In particular $\Lp\cap\Lm=\varnothing$ holds
automatically when $\lambda_\sharp<\lambda_\flat$, which we assume throughout;
this is the same non-degeneracy hypothesis as \cite[Def.~1]{KK2024}.

\subsection{Decay sectors}
Recall \cite[Prop.~4(i)]{KK2024}: if $N\ne0$ then, uniformly as $s\to\infty$ in
$S^{\pm}_{\eps}$,
\begin{equation}\label{eq:Gasym}
\Gs(s)=\exp\!\left(\frac{N}{2\tau}s^{2}\ln s+O(|s|^{2})\right).
\end{equation}
Writing $s=re^{i\theta}$ and using
$\Rea(s^{2}\ln s)=r^{2}\big[\cos(2\theta)\ln r-\theta\sin(2\theta)\big]$, the
term $r^{2}\ln r$ dominates both the $O(r^{2})$ remainder, the factor
$|e^{\pi\alpha s^{2}/\tau}|=e^{O(r^{2})}$ and the factor $|z^{-s}|=e^{O(r)}$.
Hence:

\begin{proposition}\label{prop:sectors}
Let $N\ne0$ and let $\theta$ be fixed with $\eps\le|\theta|\le\pi-\eps$. Then
$|\phi(s)z^{-s}|\to0$ super-exponentially along $\arg s=\theta$ if and only if
$N\cos2\theta<0$. Consequently
\begin{itemize}[leftmargin=1.6em,itemsep=1pt]
\item if $N>0$ the decay sectors are the two \emph{vertical} ones,
$|\theta\mp\tfrac\pi2|<\tfrac\pi4$;
\item if $N<0$ the decay sectors are the two \emph{horizontal} ones,
$|\theta|<\tfrac\pi4$ and $|\theta-\pi|<\tfrac\pi4$.
\end{itemize}
\end{proposition}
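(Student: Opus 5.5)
The plan is to take logarithms of the modulus and show that a single term controls everything. Fix $\theta$ with $\eps\le|\theta|\le\pi-\eps$, so that the ray $\arg s=\theta$ lies in $S^{+}_{\eps}$ or $S^{-}_{\eps}$ and \eqref{eq:Gasym} applies uniformly along it. Put $s=re^{i\theta}$. Then
\[
\ln|\phi(s)z^{-s}|=\frac{N}{2\tau}\Rea(s^{2}\ln s)+\Rea\frac{\pi\alpha s^{2}}{\tau}-\Rea(s\log z)+O(r^{2}).
\]
Using $\Rea(s^{2}\ln s)=r^{2}[\cos(2\theta)\ln r-\theta\sin(2\theta)]$, this becomes
\[
\ln|\phi(s)z^{-s}|=\frac{N\cos2\theta}{2\tau}\,r^{2}\ln r+O_{\theta,\alpha,z}(r^{2}).
\]
Here the $\theta\sin2\theta$ term, the Gaussian factor and the $O(r^{2})$ remainder each contribute $O(r^{2})$. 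The factor $z^{-s}$ contributes $O(r)$ for a fixed point $z$ on the Riemann surface, since $|\log z|$ is finite there.

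The two directions of the equivalence then follow.

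If $N\cos2\theta<0$, the right-hand side is at most $-c\,r^{2}\ln r$ for some $c>0$ and all large $r$. So $|\phi z^{-s}|\le e^{-cr^{2}\ln r}$, which is super-exponential decay; in fact it beats every $e^{-Ar^{2}}$.

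If $N\cos2\theta>0$, the same expansion gives growth like $e^{+cr^{2}\ln r}$, so there is no decay at all.

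If $\cos2\theta=0$, i.e. $\theta=\pm\pi/4$ or $\pm3\pi/4$, the leading term vanishes. All that remains is $e^{O(r^{2})}$, which does not give super-exponential decay in the sense of the statement. Here I read "super-exponentially" as "faster than $e^{-Ar^2}$ for every $A$", the natural meaning for an order-two kernel. Under that reading the diagonal rays are excluded, and "if and only if" holds in this sense. I would state this convention explicitly, since otherwise the diagonal case would require the unknown $O(r^{2})$ term, which \eqref{eq:Gasym} does not provide.

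The sector description then reduces to trigonometry. For $N>0$ we need $\cos2\theta<0$, i.e. $|\theta\mp\pi/2|<\pi/4$. For $N<0$ we need $\cos2\theta>0$, i.e. $|\theta|<\pi/4$ or $|\theta-\pi|<\pi/4$. The rays $\theta=0$ and $\theta=\pi$ themselves are not covered by the fixed-$\theta$ hypothesis $\eps\le|\theta|\le\pi-\eps$, but the sectors are obtained as unions over admissible $\eps$.

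The only delicate step is the interpretation at the boundary directions $\cos2\theta=0$, where the argument must rely on the chosen definition of super-exponential decay rather than on any finer asymptotics. Uniformity in $\theta$ is not needed, since $\theta$ is fixed.
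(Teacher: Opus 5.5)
Your argument is correct and is the same as the paper's: take logarithms, apply the uniform asymptotic \eqref{eq:Gasym} on $S^{\pm}_{\eps}$, expand $\Rea(s^{2}\ln s)=r^{2}[\cos(2\theta)\ln r-\theta\sin(2\theta)]$, and observe that $\tfrac{N\cos2\theta}{2\tau}r^{2}\ln r$ dominates the $O(r^{2})$ remainder, the Gaussian factor and the $e^{O(r)}$ coming from $z^{-s}$. Your separate treatment of the diagonal rays $\cos2\theta=0$ is a genuine sharpening. The paper's one-line dominance argument is silent there, and the ``only if'' hinges both on reading super-exponential as faster than every $e^{-Ar^{2}}$ and on the two-sided nature of the $O(r^{2})$ term. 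Note, however, that the union over $\eps$ still omits the rays $\theta=0,\pi$ themselves, which, as in the paper, lie outside the reach of \eqref{eq:Gasym}.
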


\subsection{Admissible contours, and why there are exactly four classes}
``Four'' is a statement about a precisely delimited set of contours; without the
delimitation it is false, since one may always adjoin a small loop around a
finite set of poles and change the value by a residue. We therefore fix the
following.

\begin{definition}[Admissible contour]\label{def:admissible}
Let $P=\Lp\cup\Lm$ and assume $\Lp\cap\Lm=\varnothing$. A contour $\gamma$ is
\emph{admissible} if
\begin{enumerate}[label=\textup{(A\arabic*)},leftmargin=2.6em,itemsep=1pt]
\item $\gamma$ is a simple, non-self-intersecting, piecewise-smooth curve
      $\R\to\C\setminus P$, proper (it leaves every compact set at both ends);
\item outside some disc $\gamma$ coincides with two straight rays, of angles
      $\theta^-\in(-\pi,0)$ and $\theta^+\in(0,\pi)$, and $\gamma$ is traversed
      from the $\theta^-$ end to the $\theta^+$ end;
\item $\C\setminus\gamma$ has two components; $\Lp$ lies in the one containing
      the large positive reals and $\Lm$ in the other (equivalently, $\gamma$
      leaves $\Lm$ on its left and $\Lp$ on its right);
\item $\int_\gamma|\phi(s)z^{-s}||ds|<\infty$.
\end{enumerate}
We write $L_{\theta^-,\theta^+}$ for such a contour with the indicated angles.
Two admissible contours are \emph{equivalent} if one can be deformed into the
other through admissible contours; the value of \eqref{eq:Kdef} is then the same.
\end{definition}

Condition (A1) is what excludes extra loops; (A3) is what excludes moving a pole
from one side to the other. Under (A4), Theorem~\ref{thm:A} shows that only the
pair $(\theta^-,\theta^+)$ matters, and only through the decay sector each angle
lies in. The classification below is therefore a classification of
\emph{end-sector types}; that each type is a \emph{single} equivalence class is
not automatic, and is the content of Lemma~\ref{lem:isotopy}.

If $N>0$, both $\theta^{\pm}$ must lie in the vertical decay sectors, i.e.\
$\theta^{+}\in(\tfrac\pi4,\tfrac{3\pi}4)$ and
$\theta^{-}\in(-\tfrac{3\pi}4,-\tfrac\pi4)$. Both intervals are connected, so
any two admissible contours are homotopic through the decay region: the function
is unique. This is why case (i) of \cite{KK2024} is clean.

If $N<0$, each end must lie in one of the \emph{two} horizontal sectors, and the
two sectors are separated by the vertical growth sectors, through which no
homotopy can pass, because (A4) fails on any intermediate ray. Counting end
sectors gives four possibilities; the following lemma is what turns that count
into a count of equivalence classes.

Two things have to be checked, and they are of different kinds: a topological
statement, that contours of the same type can be deformed into one another; and
an analytic one, that the value of \eqref{eq:Kdef} does not change along such a
deformation. We separate them.

\begin{lemma}[Normal form and invariance]\label{lem:isotopy}
Let $N<0$, let $\lambda_{\sharp}<\lambda_{\flat}$ and fix
$\sigma_{0}\in(\lambda_{\sharp},\lambda_{\flat})$.
\begin{enumerate}[label=\textup{(\roman*)},leftmargin=2.4em,itemsep=2pt]
\item Every admissible contour is properly ambient-isotopic, inside
$\C\setminus P$ and through contours satisfying \textup{(A1)}--\textup{(A3)},
to a \emph{canonical} contour obtained from
$\{\sigma_{0}+re^{i\theta^{-}}:r\ge0\}\cup
 \{\sigma_{0}+re^{i\theta^{+}}:r\ge0\}$ with the same terminal angles,
with arbitrarily small local detours inserted if one of these rays meets a point
of $P$.  Each ray meets each horizontal support line at most once, so only
finitely many such detours are required.
\item Along any such isotopy the value of \eqref{eq:Kdef} is constant. Hence the
equivalence classes of Definition~\ref{def:admissible} are in bijection with the
four ordered pairs of terminal sectors.
\end{enumerate}
\end{lemma}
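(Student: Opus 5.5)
Part (i) is topological and I would prove it by a Schoenflies-type argument in the punctured plane, using that $P$ is closed, discrete and confined to finitely many horizontal lines (Proposition~\ref{prop:combs}).

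\textbf{Step 1: straighten the ends.} Outside a large disc $D_R$ the contour already consists of two rays of angles $\theta^\pm$. By Proposition~\ref{prop:combs}, the points of $\Lp$ far away lie on finitely many horizontal lines with $\Rea s\to+\infty$, and those of $\Lm$ with $\Rea s\to-\infty$. Since $\theta^\pm\ne0,\pi$, each ray from any base point meets each support line at most once, at a finite point. I would first translate the tails of $\gamma$ parallel to the canonical rays from $\sigma_0$. In a neighbourhood of infinity this homotopy sweeps only through a region between two parallel rays, which meets each support line in a bounded segment. So it crosses finitely many points of $P$, and it can be made to avoid them by small detours. Consequently (A3) is preserved: no pole changes side.

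\textbf{Step 2: straighten the compact part.} After Step 1, $\gamma$ and the canonical contour $\gamma_0$ agree outside a large disc, up to small detours. Both satisfy (A3) and separate the same finite set $P\cap D_R$ in the same way. Closing both inside $D_R$ gives two arcs with common endpoints in the disc punctured at finitely many points, with the same partition of punctures into left and right. Under these conditions the arcs are isotopic rel endpoints in the punctured disc. This is the standard fact that arcs in a punctured disc with the same endpoints and the same partition of punctures are isotopic, since a simple arc is determined by the Jordan curve it forms with a boundary arc, by Schoenflies. This is the step I expect to be the main obstacle. The subtle point is that "same partition" must really follow from (A3) for all of $P$, including the far comb points. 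That follows because the far points lie on the correct side of both contours, which is automatic from Step 1.

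Part (ii) is analytic. Along the isotopy $\gamma_t$, the difference between the integrals over $\gamma_0$ and $\gamma_t$ is the integral of the meromorphic function $\phi(s)z^{-s}$ over a closed curve bounding a region that contains no poles. Here I use that the isotopy lies in $\C\setminus P$ and preserves sides, so no pole is swept. Cauchy's theorem applies on the truncated region inside $D_R$. The two connecting arcs at radius $R$ lie, for $R$ large, inside the decay sector containing $\theta^\pm$, because both contours' ends lie in the same horizontal decay sector and the Step 1 homotopy is by parallel translation. By \eqref{eq:Gasym} and Proposition~\ref{prop:sectors}, $|\phi z^{-s}|\le e^{-cR^2\ln R}$ there, uniformly on a closed subsector. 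The arcs therefore contribute $o(1)$, provided the arcs avoid poles, which I would arrange by choosing $R$ between comb points. Letting $R\to\infty$ gives invariance.

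Hence the value depends only on the pair of terminal sectors. The four pairs are distinct classes, since ends cannot rotate through the vertical growth sectors where (A4) fails. Rotating an end within a horizontal sector is again controlled by the same decay estimate on the connecting sector. This gives the claimed bijection.
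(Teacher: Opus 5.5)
There is a genuine gap, and it is exactly at the step you flagged as the main obstacle. The ``standard fact'' in Step~2 says that two arcs in a finitely punctured disc with the same endpoints and the same partition of the punctures are isotopic rel endpoints. This is false once there are punctures on both sides. Schoenflies does give a homeomorphism that fixes the boundary and every puncture and carries one arc onto the other. But that homeomorphism need not be isotopic to the identity rel the punctures; the pure braid group is the obstruction.

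Here is a concrete counterexample.
\begin{itemize}
\item Take real data and let $\gamma_{0}$ be the canonical contour of $\Kr$. Put $p_{1}=\min\Lp$ and $p_{2}=\max\Lm$.
\item Let $c$ be a thin ellipse around $[p_{2},p_{1}]$. It encloses no other pole and meets $\gamma_{0}$ exactly twice.
\item The Dehn-twisted contour $\gamma'=T_{c}(\gamma_{0})$ is simple and equals $\gamma_{0}$ outside a compact set.
\item Since $T_{c}$ fixes $P$ and is the identity near infinity, $\gamma'$ leaves every pole on its old side. So it satisfies \textup{(A1)}--\textup{(A4)} with the same terminal sectors.
\item However, $\gamma'\gamma_{0}^{-1}$ is a conjugate of a commutator of the loops around $p_{1}$ and $p_{2}$. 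That commutator is non-trivial in the free group $\pi_{1}(\C\setminus P)$.
\end{itemize}
So $\gamma'$ is not even homotopic to $\gamma_{0}$ rel ends. Part~(i) therefore cannot be proved by any argument. For the same reason, the bijection of equivalence classes claimed at the end of~(ii), with equivalence meaning deformation through admissible contours, also fails: each pair of terminal sectors carries infinitely many isotopy classes, for instance the $T_{c}^{k}(\gamma_{0})$.

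The paper's own proof breaks at the same point. It discards innermost bigons on the grounds that a puncture inside one ``would lie on different sides of the two cross-cuts''. That is true only when the two edges of the bigon are traversed in the same direction. When they are traversed in opposite directions, an enclosed puncture lies on the same side of both arcs. Such a bigon is compatible with \textup{(A3)} and cannot be removed, and this is what happens for $\gamma_{0}$ and $\gamma'$.

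What does hold, and is all the rest of the paper uses, is that the \emph{value} of \eqref{eq:Kdef} depends only on the pair of terminal sectors. Your part~(ii) proves this once the isotopy is dropped. The only thing it really uses is that no pole changes side, and that is \textup{(A3)} itself. The argument runs as follows.
\begin{itemize}
\item Take admissible $\gamma,\gamma'$ with the same terminal sectors and $R$ large. Let $Z$ be the cycle formed by the truncations $\gamma_{R}$ and $-\gamma'_{R}$ together with the two short arcs of $|s|=R$ joining their ends.
\item Because $P$ lies on finitely many horizontal lines (Proposition~\ref{prop:combs}), these short arcs avoid $P$ and lie in the decay sectors.
\item Close $\gamma_{R}$ and $\gamma'_{R}$ by the same long arc of $|s|=R$. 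This shows that $\operatorname{wind}(Z,p)$ equals the difference of the indicators of the $\Lm$-sides of $\gamma$ and $\gamma'$ at $p$, and equals $0$ for $|p|>R$.
\item By \textup{(A3)} this vanishes for every $p\in P$, so the residue theorem gives $\int_{Z}\phi(s)z^{-s}\,ds=0$.
\item The short arcs contribute $O(R\,e^{-cR^{2}\ln R})$ by Proposition~\ref{prop:sectors}. Letting $R\to\infty$, the two values agree.
\end{itemize}
The correct classification is therefore of homology classes rel ends, of which there are four, not of isotopy classes. The commutator in the example above is precisely what homology forgets.
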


\begin{proof}
\emph{(i) is topology.} Three standard inputs are used. First, a proper simple
arc $\gamma\subset\C$ becomes, after adding the point at infinity, a Jordan
curve in $S^{2}=\C\cup\{\infty\}$; by the Jordan curve theorem it separates
$S^{2}$ into two discs, and by the Schoenflies theorem there is a homeomorphism
of $S^{2}$ carrying $\gamma\cup\{\infty\}$ to a round circle
\cite[Ch.~V--VI]{Newman1951}, \cite[Ch.~9]{Moise1977}. This is what makes the
phrases ``two components'' in \textup{(A3)} and ``left'' and ``right''
meaningful, and it is the only place where we use them. Second, by
Proposition~\ref{prop:combs} the open strip
$V=\{\lambda_{\sharp}<\Rea s<\lambda_{\flat}\}$ meets neither $\Lp$ nor $\Lm$,
and every admissible contour crosses $V$ by \textup{(A3)}; so
$\C\setminus P$ contains a pole-free channel joining the two ends. Third, exhaust $\C$ by closed discs $D_j$ whose boundaries avoid $P$ and
meet the two terminal rays transversely. For $j$ large, the truncation of an
admissible contour in $D_j$ is a cross-cut of the finitely punctured disc
$D_j\setminus(P\cap D_j)$. Condition \textup{(A3)} says more than merely
specifying its two ends: it fixes exactly which punctures lie on each side of
the cross-cut. Put two such cross-cuts in transverse position. If they intersect,
an innermost bigon is bounded by one subarc of each. That bigon cannot contain a
puncture, since a puncture inside it would lie on different sides of the two
cross-cuts, contrary to the common partition imposed by \textup{(A3)}. Removing
empty bigons therefore makes the cross-cuts disjoint. The closed strip between
them contains no puncture, and the Jordan--Schoenflies theorem slides one across
that strip to the other, relative to the boundary and $P\cap D_j$; this is the
arc form of the homotopy--isotopy principle
\cite[\S4]{Epstein1966}, \cite[\S1.2]{FarbMargalit2012}. Applying isotopy
extension on the nested discs and choosing the isotopies compatibly gives, by
exhaustion, a proper ambient isotopy in $\C\setminus P$. Inside the pole-free channel $V$, empty-bigon isotopies remove any remaining
backtracking; the resulting single crossing can then be moved to $\sigma_0$,
while the terminal rays are rotated inside their connected decay sectors. Thus the proper isotopy class is determined by
the ordered pair of terminal sectors \emph{together with the fixed separation
of $\Lm$ and $\Lp$ imposed by \textup{(A3)}}. Conversely, contours with
different terminal sectors are not equivalent, because rotating an end between
the two horizontal sectors necessarily crosses a vertical growth sector, where
\textup{(A4)} fails by Proposition~\ref{prop:sectors}.

\emph{(ii) is analysis, and uses no closing-arc estimate near the real axis.}
Let $\gamma_{0},\gamma_{1}$ be two members of an admissible isotopy that agree
outside a disc up to a bounded offset of their terminal rays, and that are close
enough that the region between them is contained in a simply connected subset of
$\C\setminus P$. Truncate both at $|s|=R$ and join the four endpoints by the two
short circular arcs $\alpha^{\pm}_{R}$. The resulting closed chain bounds a
region free of poles, so Cauchy's theorem gives
\[
\int_{\gamma_{0}(R)}\phi(s)z^{-s}ds-\int_{\gamma_{1}(R)}\phi(s)z^{-s}ds
=\pm\int_{\alpha^{+}_{R}}\!\!-\int_{\alpha^{-}_{R}} .
\]
Each $\alpha^{\pm}_{R}$ sweeps an angular interval whose closure lies in
\emph{one} terminal sector, hence at angular distance at least some
$\eps'>0$ from $\arg s\in\{0,\pi\}$; on it \eqref{eq:Gasym} is available and
Proposition~\ref{prop:sectors} gives
$|\phi(s)z^{-s}|\le\exp(-c_{\eps'}R^{2}\ln R)$, while the arcs have length at most
$O(R)$. This polynomial factor is dominated by the super-exponential decay, so
letting $R\to\infty$ the two integrals agree. A finite chain of such
steps, extracted from the isotopy by compactness of the parameter interval,
covers the whole deformation. This is the reason the deformation invariance
asserted in Definition~\ref{def:admissible} is legitimate; note in particular
that no arc crossing a neighbourhood of the real axis occurs here, so
\S\ref{sec:arcs} is not invoked.
\end{proof}

\begin{remark}
Only the terminal behaviour is topologically relevant, which is why the
classification is finite even though $\C\setminus P$ is an infinite-type
planar surface with infinitely many punctures (and genus zero): $P$ is discrete,
and condition \textup{(A3)} fixes the puncture partition throughout the
exhaustion used above.
\end{remark}

There are therefore exactly $2\times2=4$ equivalence classes of admissible
contours.

Throughout the rest of the paper we fix two angles
\begin{equation}\label{eq:epsconv}
0<\eps_{0}<\eps_{1}<\tfrac{\pi}{4},
\end{equation}
using $\eps_{0}$ for the terminal rays of a contour and reserving the larger
$\eps_{1}$ for the closing arcs of \S\ref{sec:arcs}. The margin
$\eps_{1}-\eps_{0}$ is what lets a ray whose vertex is $\sigma_{0}\ne0$ be closed
by an arc centred at the origin; see the proof of Theorem~\ref{thm:B}.

\begin{definition}\label{def:four}
Let $N<0$ and $\lambda_\sharp<\lambda_\flat$. With $\eps=\eps_{0}$ as in
\eqref{eq:epsconv} define
\[
\begin{array}{llll}
\Kr: & \gamma=L_{-\eps,\,\eps}, & \text{both ends east,} &
   \text{encircles }\Lp\text{ clockwise};\\[2pt]
\Kl: & \gamma=L_{-\pi+\eps,\,\pi-\eps}, & \text{both ends west,} &
   \text{encircles }\Lm\text{ counterclockwise};\\[2pt]
\Ku: & \gamma=L_{-\eps,\,\pi-\eps}, & \text{SE}\to\text{NW};\\[2pt]
\Kd: & \gamma=L_{-\pi+\eps,\,\eps}, & \text{SW}\to\text{NE},
\end{array}
\]
each by the integral \eqref{eq:Kdef} over the indicated contour.
\end{definition}

Figure~\ref{fig:contours} shows the four classes together with the sector
structure.

\begin{figure}[t]
\centering
\begin{tikzpicture}[scale=0.92,>=Stealth,
    pole/.style={circle,inner sep=0.9pt},
    pp/.style={pole,fill=red!70!black},
    pm/.style={pole,fill=blue!60!black}]
\begin{scope}
  \fill[black!8] (0,0) -- (45:2.4) arc[start angle=45,end angle=135,radius=2.4] -- cycle;
  \fill[black!8] (0,0) -- (-45:2.4) arc[start angle=-45,end angle=-135,radius=2.4] -- cycle;
  \draw[gray!45] (0,0) circle (2.4);
  \draw[gray!50,dashed] (-135:2.4)--(45:2.4);
  \draw[gray!50,dashed] (135:2.4)--(-45:2.4);
  \draw[->,gray!65] (-2.75,0) -- (2.8,0) node[right,font=\tiny]{$\Rea s$};
  \draw[->,gray!65] (0,-2.75) -- (0,2.8) node[above,font=\tiny]{$\Ima s$};
  \foreach \x in {0.6,1.05,1.5,1.95,2.35} \node[pp] at (\x,0) {};
  \foreach \x in {-0.75,-1.2,-1.65,-2.1,-2.45} \node[pm] at (\x,0) {};
  \draw[very thick,black,postaction={decorate,decoration={markings,
        mark=at position 0.62 with {\arrow{Stealth}}}}] (-0.1,-2.75) -- (-0.1,2.75);
  \node[font=\tiny,gray!50!black] at (0,1.55) {decay};
  \node[font=\tiny,gray!50!black] at (0,-1.55) {decay};
  \node[font=\tiny,gray!50!black] at (1.6,0.55) {grow};
  \node[font=\tiny,gray!50!black] at (-1.6,0.55) {grow};
  \node[font=\scriptsize] at (0,-3.25) {(a) $N>0$: one contour class};
\end{scope}
\begin{scope}[xshift=7.6cm]
  \fill[black!8] (0,0) -- (-45:2.4) arc[start angle=-45,end angle=45,radius=2.4] -- cycle;
  \fill[black!8] (0,0) -- (135:2.4) arc[start angle=135,end angle=225,radius=2.4] -- cycle;
  \draw[gray!45] (0,0) circle (2.4);
  \draw[gray!50,dashed] (-135:2.4)--(45:2.4);
  \draw[gray!50,dashed] (135:2.4)--(-45:2.4);
  \draw[->,gray!65] (-2.75,0) -- (2.8,0) node[right,font=\tiny]{$\Rea s$};
  \draw[->,gray!65] (0,-2.75) -- (0,2.8) node[above,font=\tiny]{$\Ima s$};
  \foreach \x in {0.6,1.05,1.5,1.95,2.35,2.7} \node[pp] at (\x,0) {};
  \foreach \x in {-0.75,-1.2,-1.65,-2.1,-2.45,-2.8} \node[pm] at (\x,0) {};
  \node[font=\tiny,gray!50!black] at (0,1.7) {grow};
  \node[font=\tiny,gray!50!black] at (0,-1.7) {grow};
  \draw[very thick,red!70!black,postaction={decorate,decoration={markings,
        mark=at position 0.30 with {\arrow{Stealth}},
        mark=at position 0.80 with {\arrow{Stealth}}}}]
        (3.05,-0.45) -- (0.05,-0.45) arc[start angle=-90,end angle=90,radius=0.45] -- (3.05,0.45);
  \draw[very thick,blue!60!black,postaction={decorate,decoration={markings,
        mark=at position 0.30 with {\arrow{Stealth}},
        mark=at position 0.80 with {\arrow{Stealth}}}}]
        (-3.05,-0.95) -- (-0.05,-0.95) arc[start angle=270,end angle=90,radius=0.95] -- (-3.05,0.95);
  \draw[very thick,green!35!black,dash pattern=on 3pt off 2pt,
        postaction={decorate,decoration={markings,
        mark=at position 0.5 with {\arrow{Stealth}}}}]
        (3.05,-1.55) .. controls (0.9,-1.55) and (-0.9,1.55) .. (-3.05,1.55);
  \node[font=\scriptsize,red!70!black]   at (2.35,0.78) {$\Kr$};
  \node[font=\scriptsize,blue!60!black]  at (-2.4,1.3)  {$\Kl$};
  \node[font=\scriptsize,green!35!black] at (-1.15,1.95) {$\Ku$};
  \node[font=\scriptsize] at (0,-3.25) {(b) $N<0$: four contour classes};
\end{scope}
\end{tikzpicture}
\caption{Shaded: the sectors in which $\phi(s)z^{-s}$ decays
super-exponentially (Proposition~\ref{prop:sectors}). Red dots: the poles $\Lp$
(a comb running east); blue dots: $\Lm$ (running west). (a) For $N>0$ both
ends of the contour are trapped in the vertical sectors, and all admissible
contours are homotopic. (b) For $N<0$ each end may go east or west, giving four
non-homotopic classes; $\Kd$, omitted for clarity, is the reflection of $\Ku$
in the real axis.}
\label{fig:contours}
\end{figure}

\section{Unconditional convergence}

\begin{theorem}[Convergence and analyticity]\label{thm:A}
Let $N<0$, $\tau>0$, $\lambda_\sharp<\lambda_\flat$, and let $\alpha\in\C$ be
\emph{arbitrary}. Let $\gamma=L_{\theta^-,\theta^+}$ be any contour whose two
terminal rays satisfy $\cos2\theta^{\pm}>0$ and
$\eps\le|\theta^{\pm}|,\,|\pi-|\theta^{\pm}||$. Then along $\gamma$
\begin{equation}\label{eq:bound}
\big|\phi(s)z^{-s}\big|
\le\exp\!\left(-\frac{|N|}{2\tau}\cos(2\theta)\,r^{2}\ln r\,(1+o(1))\right),
\qquad r=|s|\to\infty,
\end{equation}
so the integral \eqref{eq:Kdef} converges absolutely for every $z$ on the
Riemann surface of the logarithm; its value depends only on the homotopy class
of $\gamma$; and $w\mapsto K(e^{2\pi w})$ extends to an entire function of
$w\in\C$.
\end{theorem}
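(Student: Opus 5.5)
The plan is to read the bound \eqref{eq:bound} straight off the kernel asymptotic \eqref{eq:Gasym}, and then obtain convergence, homotopy invariance and entirety in turn. Fix a terminal ray of angle $\theta$ with $\cos2\theta>0$ and $\eps\le|\theta|\le\pi-\eps$. Such a ray lies in the closed sector $S^{+}_{\eps}$ or $S^{-}_{\eps}$, so \eqref{eq:Gasym} applies uniformly on it. On the ray we write $s=\sigma_0+re^{i\theta}$; the vertex shift changes $|s|$ and $\arg s$ only by $O(1/r)$, and that is absorbed into the $o(1)$. We then take real parts:
\[
\Rea\Big(\tfrac{N}{2\tau}s^2\ln s\Big)=\tfrac{N}{2\tau}r^2\big(\cos2\theta\ln r-\theta\sin2\theta\big).
\]
The remaining factors are handled as follows:
\begin{itemize}[leftmargin=1.6em,itemsep=1pt]
\item the $O(r^2)$ error term in \eqref{eq:Gasym};
\item the Gaussian factor, with $|e^{\pi\alpha s^2/\tau}|\le e^{\pi|\alpha|r^2/\tau}$;
\item the power factor, with $|z^{-s}|=\exp(-\Rea s\ln|z|+\Ima s\arg z)\le e^{C(z)r}$, where $\arg z$ is the unrestricted argument on the Riemann surface.
\end{itemize}
All of these are $O(r^2)=o(r^2\ln r)$. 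Since $N<0$ and $\cos2\theta>0$, this gives \eqref{eq:bound}. No restriction on $\alpha$, $\nu$ or $\mu$ enters, because those parameters appear only in the $O(r^2)$ and lower terms.

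Absolute convergence then follows. On the compact middle portion of $\gamma$, the integrand is continuous: $\gamma$ avoids $P$ by (A1), and $\phi$ is meromorphic with poles only in $P$. On the two rays, \eqref{eq:bound} dominates any power of $r$, which is integrable against $|ds|=dr$. Homotopy invariance is Lemma~\ref{lem:isotopy}(ii), now applicable because (A4) has been verified for every such contour. Rotating a terminal angle within its open decay sector is covered by the same Cauchy argument, using arcs that stay inside one sector where \eqref{eq:bound} holds uniformly.

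For analyticity, put $z=e^{2\pi w}$, so that $z^{-s}=e^{-2\pi ws}$ is entire in $w$ for each fixed $s$. The step I expect to need the most care is the local uniformity in $w$. For $|w|\le W$ we have $|e^{-2\pi ws}|\le e^{2\pi W|s|}$. The bound \eqref{eq:bound} therefore holds uniformly for $|w|\le W$ with $C(z)$ replaced by $2\pi W$, because the $o(1)$ absorbs any $O(r)$ term uniformly in the constant. This yields a single integrable majorant $g_W(s)$ on $\gamma$. Continuity in $w$ then follows by dominated convergence. Holomorphy follows either by Morera's theorem combined with Fubini, or by differentiating under the integral sign, since $|s\,e^{-2\pi ws}|$ is dominated by $|s|g_W$, which is still integrable. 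As $W$ is arbitrary, $w\mapsto K(e^{2\pi w})$ is entire.

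The one genuine subtlety is making the uniformity of \eqref{eq:Gasym} along a ray with vertex $\sigma_0\neq0$ explicit. I would handle this by noting that for $r$ large the ray lies in $S^{\pm}_{\eps/2}$, and then applying \eqref{eq:Gasym} with $\eps/2$ in place of $\eps$.
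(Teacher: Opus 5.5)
Your proposal is correct and follows essentially the same route as the paper. The bound is read off from \eqref{eq:Gasym} via Proposition~\ref{prop:sectors}; contour independence comes from rotating the terminal rays within one decay sector, using a Cauchy argument with arcs confined to that sector as in Lemma~\ref{lem:isotopy}(ii); and entireness in $w$ comes from a majorant uniform for $w$ in compact sets, which justifies differentiating under the integral sign. Your passage to $S^{\pm}_{\eps/2}$, to handle a ray whose vertex is not the origin, is a detail the paper leaves implicit, and you handle it correctly.
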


\begin{proof}
The bound \eqref{eq:bound} is Proposition~\ref{prop:sectors} together with
\eqref{eq:Gasym}, valid uniformly in $S^{\pm}_{\eps}$. Absolute convergence and
$\gamma$-independence within a class follow by rotating the terminal rays inside
the decay sector. Since \eqref{eq:bound} holds uniformly for $\arg z$ in any
compact set and the bound is independent of $z$ up to the factor
$e^{O(r)}$, differentiation under the integral sign is legitimate for all
$w=\tfrac{1}{2\pi}\log z\in\C$.
\end{proof}

\begin{remark}[What disappears]\label{rem:disappears}
Theorem~\ref{thm:A} should be compared with the list of parameters excluded in
\cite[p.~235]{KK2024}. For $N<0$ \emph{none} of the conditions
$\Rea\alpha\ge0$, $\Rea\nu<0$, $4\alpha\ne\pm(p-q)i$, $\arg\mu\ne\pm\pi/2$ is
needed; the seven-case analysis and Table~1 of \cite{KK2024} are vacuous here.
The entire difficulty of the case has migrated from ``does the integral
converge'' to ``which of the four is it''.
\end{remark}

\begin{remark}[A caveat for computation]\label{rem:caveat}
Estimate \eqref{eq:Gasym} is stated in \cite{KK2024} only on $S^{\pm}_{\eps}$,
which \emph{excludes} a neighbourhood of the real axis --- exactly where the
pole combs live. Theorem~\ref{thm:A} is therefore proved with genuine rays whose
angles are bounded away from $0$ and $\pi$. For computation it is far more
convenient to realise the same homotopy classes by horizontal lines
$\Ima s=\pm h$; these leave $S^{\pm}_{\eps}$ at large $|s|$, so their use is
justified a posteriori by the $h$-independence of the computed value
(\S\ref{sec:numerics}, T1) rather than by \eqref{eq:Gasym}. See also
\S\ref{sec:pitfalls}.
\end{remark}

\section{Residue expansions}

Throughout this section we assume the genericity hypothesis
\begin{equation}\label{eq:generic}
\tau\notin\Q,\qquad
\begin{cases}
a_i-a_j\notin\Z+\tau\Z, & 1\le i<j\le p,\\
b_i-b_j\notin\Z+\tau\Z, & 1\le i<j\le q,\\
a_i-b_j\notin\Z+\tau\Z, & 1\le i\le p,\ 1\le j\le q.
\end{cases}
\end{equation}
The condition is stated over pairs of \emph{factor indices}, not over the
underlying set of values: two parameters sitting in different factors are subject
to it even when they happen to be numerically equal (indeed $a_i=a_j$ with
$i\ne j$ already violates it, as it must, since then the two zero lattices
coincide). Two separate things are being excluded. Within a denominator family,
\eqref{eq:generic} makes the zeros simple and pairwise distinct. Across
families, it forbids a \emph{numerator} factor from vanishing at a zero of a
denominator factor: since every zero lattice is a translate of
$-(\Z_{\ge0}+\tau\Z_{\ge0})$ by one of the $c$'s, two such lattices meet exactly
when the corresponding difference lies in $\Z+\tau\Z$. Without that, a putative
pole may be a removable singularity, and the pole set of $\Gs$ is only
\emph{contained} in $\Lp\cup\Lm$. Under \eqref{eq:generic} it equals
$\Lp\cup\Lm$ and every pole is simple. The condition is not vacuous bookkeeping:
Remark~\ref{rem:degenerate} shows that the transform calculus of
\S\ref{sec:closure} violates it \emph{by construction}, every insertion creating
a pair $\{c,c+\tau\}$ whose difference is $-\tau$.

\subsection{The closing arcs}\label{sec:arcs}
Turning a hairpin into a residue sum means closing it and letting the closing
piece recede to infinity. That closing piece is an arc, and an arc joining the
two rays of $L_{-\eps,\eps}$ \emph{necessarily crosses a neighbourhood of the
positive real axis} --- precisely the region excluded from
$S^{\pm}_{\eps}$, and precisely where the poles are. Estimate \eqref{eq:Gasym}
therefore does not apply on it, and the required bound has to be produced
separately. We do so in two steps: a reduction that is unconditional, and one
explicitly stated growth hypothesis of Diophantine type.

\begin{lemma}[Reduction to a double-sine product]\label{lem:reduction}
Let $S_{2}(z;\tau)=(2\pi)^{\frac{1+\tau}{2}-z}G(z;\tau)/G(1+\tau-z;\tau)$ be the
double sine function, so that $S_{2}(z;\tau)S_{2}(1+\tau-z;\tau)=1$ and
\begin{equation}\label{eq:reflect}
G(w;\tau)=(2\pi)^{w-\frac{1+\tau}{2}}\,S_{2}(w;\tau)\,G(1+\tau-w;\tau).
\end{equation}
Applying \eqref{eq:reflect} to the factors $G(b_j-s;\tau)$, $j\le m$, and
$G(a_j-s;\tau)$, $j>n$, gives
\begin{equation}\label{eq:split}
\Gs(s)=(2\pi)^{L(s)}\;\Pi(s)\;\widetilde{\Gs}(s),
\qquad
\Pi(s):=\frac{\prod_{j\le m}S_{2}(b_j-s;\tau)}{\prod_{j>n}S_{2}(a_j-s;\tau)},
\end{equation}
where $L$ is affine in $s$ and $\widetilde\Gs$ is a quotient of $m+n$ over
$(q-m)+(p-n)$ factors $G(c+s;\tau)$ with constant $c$. Consequently, uniformly
for $s\to\infty$ in $|\arg s|\le\pi/4$,
\begin{equation}\label{eq:tildeasym}
\ln\big|\widetilde\Gs(s)\big|
=\frac{N}{2\tau}\Rea\!\big(s^{2}\ln s\big)+O(|s|^{2}).
\end{equation}
\end{lemma}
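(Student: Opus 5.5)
The proof has two parts. The identities are bookkeeping from the definition of $S_{2}$; the asymptotic \eqref{eq:tildeasym} will come from Lemma~\ref{lem:binet} applied factor by factor, using that every factor of $\widetilde\Gs$ has argument $c+s$ with $s$ in the right half-plane.

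\emph{The identities.} From $S_{2}(z;\tau)=(2\pi)^{\frac{1+\tau}{2}-z}G(z;\tau)/G(1+\tau-z;\tau)$, replacing $z$ by $1+\tau-z$ gives the reciprocal expression, since the exponents $\tfrac{1+\tau}{2}-z$ and $\tfrac{1+\tau}{2}-(1+\tau-z)$ cancel. Hence $S_{2}(z)S_{2}(1+\tau-z)=1$, and \eqref{eq:reflect} is the definition solved for $G(w)$. Apply \eqref{eq:reflect} with $w=b_j-s$ for $j\le m$ and with $w=a_j-s$ for $j>n$. This turns $G(b_j-s)$ into $(2\pi)^{b_j-s-\frac{1+\tau}{2}}S_{2}(b_j-s)\,G(1+\tau-b_j+s)$, and likewise for the $a_j$ factors in the denominator. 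Collecting terms gives \eqref{eq:split} with
\[
L(s)=\sum_{j\le m}\Bigl(b_j-s-\tfrac{1+\tau}{2}\Bigr)-\sum_{j>n}\Bigl(a_j-s-\tfrac{1+\tau}{2}\Bigr),
\]
which is affine in $s$. It also gives
\[
\widetilde\Gs(s)=\frac{\prod_{j\le m}G(1+\tau-b_j+s)\prod_{j\le n}G(1+\tau-a_j+s)}{\prod_{j>m}G(1+\tau-b_j+s)\prod_{j>n}G(1+\tau-a_j+s)},
\]
so there are $m+n$ factors $G(c+s)$ upstairs and $(q-m)+(p-n)$ downstairs.

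\emph{The asymptotic.} Fix one of the constants $c$. For $|\arg s|\le\pi/4$ and $|s|$ large we have $\Rea(c+s)\ge|s|/\sqrt2-|c|\to\infty$, so \eqref{eq:binet} applies to $\ln G(c+s;\tau)$. The integral term is bounded by $\sup|f_{3}|\int_0^\infty e^{-\Rea(c+s)x}\,dx=O(1/|s|)$ uniformly in the closed sector, since $f_{3}$ is bounded on $[0,\infty)$. We have $\ln(c+s)=\ln s+O(1/|s|)$, and $P_{\tau}(c+s)=\tfrac{s^{2}}{2\tau}+O(|s|)$. Therefore
\[
P_{\tau}(c+s)\ln(c+s)=\tfrac{1}{2\tau}s^{2}\ln s+O(|s|\ln|s|)+O(|s|),
\]
while $Q_{\tau}(c+s)=O(|s|^{2})$. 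So each factor contributes $\tfrac{1}{2\tau}\Rea(s^{2}\ln s)+O(|s|^{2})$ to $\ln|\widetilde\Gs(s)|$. Summing with sign $+$ for numerator and $-$ for denominator factors gives the coefficient
\[
\frac{(m+n)-(p+q-m-n)}{2\tau}=\frac{N}{2\tau},
\]
which is \eqref{eq:tildeasym}.

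\emph{The main point to check.} The only real issue is uniformity up to and including the real axis, which is exactly the region excluded from $S^{\pm}_{\eps}$ in \eqref{eq:Gasym}. This works because all the factors of $\widetilde\Gs$ have their zero lattices $-c-(\Z_{\ge0}+\tau\Z_{\ge0})$ in a left half-plane, so the Binet formula holds throughout $|\arg s|\le\pi/4$ with no pole or branch difficulty. All oscillatory and zero-carrying behaviour near the positive real axis has been moved into $\Pi(s)$, which the lemma leaves untouched. I would also state explicitly that $\ln G(c+s)$ is taken as the branch given by \eqref{eq:binet}, which is legitimate on $\Rea(c+s)>0$.
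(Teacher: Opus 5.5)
Your proof is correct. The algebraic half coincides with the paper's: the explicit affine $L$, the form of $\widetilde\Gs$, and the net factor count $(m+n)-(q-m)-(p-n)=N$.

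For the asymptotic half you take a different route. The paper notes that $\arg(c+s)=\arg s+o(1)$, so every argument stays in $|\arg|<\pi-\eps'$. It then cites the uniform asymptotic expansion of $\ln G$ in that sector (Proposition~2 of \cite{KK2024}). You instead start from the Binet representation of Lemma~\ref{lem:binet} and estimate the three pieces by hand. For that you use only three facts: $f_{3}$ is bounded on $[0,\infty)$, $P_\tau$ and $Q_\tau$ have degree at most two, and the leading coefficient of $P_\tau$ is $1/(2\tau)$.

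What your version buys:
\begin{enumerate}[label=\textup{(\alph*)},leftmargin=2.2em,itemsep=2pt]
\item It keeps the argument inside the paper.
\item It makes the uniformity up to and including the real axis transparent. The only geometric input is $\Rea(c+s)\ge|s|/\sqrt2-|c|\to\infty$ for $|\arg s|\le\pi/4$.
\item With the coefficients of Lemma~\ref{lem:binet} it gives the next term for free:
\[
\ln|\widetilde\Gs(s)|=\frac{N}{2\tau}\Rea\big(s^{2}\ln s-(\tfrac32+\ln\tau)s^{2}\big)+O(|s|\ln|s|).
\]
\end{enumerate}

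What it gives up is range. The Binet formula needs $\Rea(c+s)>0$, so your estimate cannot reach beyond $|\arg s|<\pi/2$. The cited proposition is uniform in the much wider sector $|\arg s|\le\pi-\eps$. Neither the lemma as stated nor its only use needs that extra range. That use is the eastern closing arcs; the western ones are reduced to them by $s\mapsto-s$.
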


\begin{proof}
Formula \eqref{eq:reflect} is the definition of $S_2$ rearranged; the inversion
$S_2(z)S_2(1+\tau-z)=1$ is immediate from it. Counting factors after the
substitution: the numerator of $\widetilde\Gs$ carries $G(1+\tau-b_j+s;\tau)$
for $j\le m$ and $G(1+\tau-a_j+s;\tau)$ for $j\le n$, the denominator
$G(1+\tau-b_j+s;\tau)$ for $j>m$ and $G(1+\tau-a_j+s;\tau)$ for $j>n$; the
difference of the counts is $2(m+n)-p-q=N$. Every argument is of the form
$c+s$, so $\arg(c+s)=\arg s+o(1)$ and \emph{all} of them stay in
$|\arg|<\pi-\eps'$ when $|\arg s|\le\pi/4$; Proposition~2 of \cite{KK2024},
which is uniform in that sector, then gives \eqref{eq:tildeasym}.
\end{proof}

By the functional equations
\[
S_{2}(z+1;\tau)=\frac{S_{2}(z;\tau)}{2\sin(\pi z/\tau)},\qquad
S_{2}(z+\tau;\tau)=\frac{S_{2}(z;\tau)}{2\sin(\pi z)},
\]
the factor $\Pi$ in
\eqref{eq:split} is a product of reciprocal sines evaluated on the lattice
$\Z+\tau\Z$. Its size is therefore a Diophantine question about $\tau$ and the
parameters, and it is
the \emph{same} question that governs the convergence of the residue series
(Remark~\ref{rem:diophantine}) and, one rung down, the series representation of
the density of the supremum of a stable process
\cite{HubalekKuznetsov2011,Kuznetsov2013,HackmannKuznetsov2013}. We isolate
exactly what we need.

\begin{itemize}[leftmargin=2.6em]
\item[\textup{(H$^{\sup}_{\mathrm{arc}}$)}] There are $\eps_{1}\in(0,\pi/4)$ and a
sequence $R_{1}<R_{2}<\cdots\to\infty$ with
\[
\sup_{\substack{|\theta|\le\eps_{1}\;\text{or}\;|\theta-\pi|\le\eps_{1}}}
\log^{+}|\Pi(R_j e^{i\theta})|
=o\big(R_{j}^{2}\ln R_{j}\big),
\qquad \log^{+}x:=\max\{\log x,0\}.
\]
\item[\textup{(H$^{\mathrm{int}}_{\mathrm{arc}}$)}] There is
$\eps_{1}\in(0,\pi/4)$ such that, with
\begin{equation}\label{eq:Mdef}
\begin{aligned}
M(r)&:=\int_{|\theta|\le\eps_{1}}|\Pi(re^{i\theta})|\,d\theta
      +\int_{|\theta-\pi|\le\eps_{1}}|\Pi(re^{i\theta})|\,d\theta,\\
\log^{+}\!\left(\int_{R}^{2R}M(r)\,dr\right)
&=o\big(R^{2}\ln R\big),\qquad R\to\infty.
\end{aligned}
\end{equation}
\end{itemize}
We write \Harc for the disjunction: \emph{the arc hypothesis holds} if either
\textup{(H$^{\sup}_{\mathrm{arc}}$)} or
\textup{(H$^{\mathrm{int}}_{\mathrm{arc}}$)} does. Every conditional statement
below assumes only \Harc.

Three remarks on the shape of these conditions. First, the exponent
$o(R^{2}\ln R)$ is all the proof of Lemma~\ref{lem:arcs} consumes, because the
term it has to beat is $-\tfrac{|N|\cos2\eps_{1}}{2\tau}R^{2}\ln R$; an earlier
version of this paper assumed the far stronger $O(R\ln R)$, a full power of $R$
more than is needed. Second,
\textup{(H$^{\mathrm{int}}_{\mathrm{arc}}$)} is a bound on an \emph{average} of
an $L^{1}$ norm, not on a supremum, and it does not require a good sequence of
radii to be exhibited --- Lemma~\ref{lem:adaptive} produces one. That matters,
because a circle can pass arbitrarily close to a pole of $\Pi$ while an annulus
cannot notice: a simple pole is area-integrable in two dimensions. Third, neither
form is proved here for any $\tau$.

We stress what \Harc is and is not. It is an explicit growth bound on the
double-sine quotient $\Pi$ of \eqref{eq:split} along a sequence of closing arcs.
It depends not only on $\tau$ but on the full parameter vectors $\va,\vb$,
on which factor sits in which block, and on the chosen sequence $R_{j}$; the
notation is $\Harc$ rather than $(\mathrm{D}_{\tau})$ for exactly that reason.
It is \emph{motivated} by the small-denominator phenomena that appear in the
residue series of related stable-process problems
\cite{HubalekKuznetsov2011,Kuznetsov2013,HackmannKuznetsov2013}, but those
results concern a specific coefficient structure and do not imply \Harc for
general parameters. Finding simple number-theoretic conditions on $\tau$ (and on
$\va,\vb$) that imply \Harc is left open; see item~\ref{op:diophantine} of
\S\ref{sec:open}.

\begin{lemma}[Closing arcs]\label{lem:arcs}
Let $N<0$ and assume \textup{(H$^{\sup}_{\mathrm{arc}}$)}. Then there are $c>0$
and $J$ with
\begin{equation}\label{eq:arcbound}
\sup_{|\theta|\le\eps_{1}}\big|\phi(R_j e^{i\theta})z^{-R_j e^{i\theta}}\big|
\;\le\;\exp\!\big(-c\,R_{j}^{2}\ln R_{j}\big),\qquad j\ge J,
\end{equation}
and the same bound with $s=R_j e^{i\theta}$ and
$|\theta-\pi|\le\eps_{1}$. In particular the
contribution of those arcs to any closed contour tends to $0$ along the sequence
$R_{j}$.
\end{lemma}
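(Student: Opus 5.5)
We would take logarithms and split $\ln|\phi(s)z^{-s}|$ according to the factorisation \eqref{eq:split} of Lemma~\ref{lem:reduction}:
\[
\ln\big|\phi(s)z^{-s}\big|
=\Rea L(s)\ln 2\pi+\ln|\Pi(s)|+\ln\big|\widetilde\Gs(s)\big|
+\frac{\pi}{\tau}\Rea(\alpha s^{2})-\Rea\big(s\log z\big),
\]
and bound each term on the arc $s=R_je^{i\theta}$, $|\theta|\le\eps_{1}$. Because $L$ is affine, the first term is $O(R_j)$. The two explicit terms are $O(R_j^{2})$ and $O(R_j)$ respectively, for fixed $\alpha$ and for $\arg z$ in a compact set. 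The kernel term is controlled by \eqref{eq:tildeasym}, which is uniform in $|\arg s|\le\pi/4\supset\{|\theta|\le\eps_1\}$. Since $\Rea(s^{2}\ln s)=r^{2}[\cos2\theta\ln r-\theta\sin2\theta]$, it gives
\[
\ln|\widetilde\Gs(R_je^{i\theta})|
\le-\frac{|N|}{2\tau}\cos(2\eps_{1})R_j^{2}\ln R_j+O(R_j^{2}),
\]
using $N<0$ and $\cos2\theta\ge\cos2\eps_1>0$ because $\eps_1<\pi/4$.

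Next we use $\ln|\Pi|\le\log^+|\Pi|$ together with \textup{(H$^{\sup}_{\mathrm{arc}}$)}, which says the supremum over the arc is $o(R_j^{2}\ln R_j)$. Adding the pieces gives, uniformly in $\theta$,
\[
\ln\big|\phi z^{-s}\big|\le-\Big(\frac{|N|\cos2\eps_1}{2\tau}-o(1)\Big)R_j^{2}\ln R_j .
\]
For $j\ge J$ this is at most $-cR_j^{2}\ln R_j$ with, say, $c=|N|\cos(2\eps_1)/(4\tau)$. This proves \eqref{eq:arcbound} on the eastern arc.

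For the western arc $|\theta-\pi|\le\eps_1$ we have to check that \eqref{eq:tildeasym} still holds, since Lemma~\ref{lem:reduction} is stated only for $|\arg s|\le\pi/4$. This is the main obstacle, because the arguments $c+s$ of $\widetilde\Gs$ now approach the negative real axis, where the Barnes asymptotics of $G(c+s)$ break down near the zeros. Our first attempt would be to apply the reflection \eqref{eq:reflect} to the other group of factors instead: $G(1+\tau-b_j+s)$ for $j>m$ and $G(1+\tau-a_j+s)$ for $j\le n$. This rewrites $\Gs$ as $(2\pi)^{L'(s)}\Pi'(s)\widetilde\Gs'(s)$, where $\widetilde\Gs'$ involves only $G(c-s)$, with arguments in the right half-plane for $|\theta-\pi|\le\eps_1$. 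Its count is again $N$, and Proposition~2 of \cite{KK2024} gives the same $\frac{N}{2\tau}\Rea(s^2\ln s)$ leading term. The double-sine factor then changes, since $\Pi'=1/\Pi$ up to the inversion $S_2(z)S_2(1+\tau-z)=1$ and the affine power of $2\pi$. We would then read (H$_{\mathrm{arc}}$) on the $\theta\approx\pi$ arc as applying to the $\Pi$ appearing in whichever splitting is used there. If that identification fails, the fallback is to assume the hypothesis separately for each splitting.

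The final claim is immediate once \eqref{eq:arcbound} holds. The arcs have length $O(R_j)$, and $R_j\exp(-cR_j^{2}\ln R_j)\to0$, so their contribution to any closed contour vanishes along the sequence $R_j$.
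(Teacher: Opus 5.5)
Your eastern-arc argument is the paper's, essentially line by line. You split by \eqref{eq:split}, use \eqref{eq:tildeasym} with $\cos2\theta\ge\cos2\eps_1>0$, absorb the Gaussian and $z^{-s}$ into $O(R_j^2)$, and let \textup{(H$^{\sup}_{\mathrm{arc}}$)} handle $\log^+|\Pi|$, which allows any $c<|N|\cos(2\eps_1)/(2\tau)$.

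On the western arcs the paper substitutes $s\mapsto-s$. By the first identity of \cite[Eq.~(15)]{KK2024} this turns the kernel into $\Gs^{n,m}_{q,p}(1+\tau-\vb;1+\tau-\va\,|\,\cdot\,)$ with the same $N$, and the paper then reruns the eastern argument. Your reflection of the other group of factors is the same manoeuvre: the $\Pi$ of that reflected kernel, evaluated at $-s$, is exactly your
\[
\Pi'(s)=\frac{\prod_{j>m}S_{2}(b_j-s;\tau)}{\prod_{j\le n}S_{2}(a_j-s;\tau)} .
\]

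What must go is the identification $\Pi'=1/\Pi$. It is false, since
$\Pi(s)\,\Pi'(s)=\prod_{j=1}^{q}S_{2}(b_j-s;\tau)\big/\prod_{j=1}^{p}S_{2}(a_j-s;\tau)$. Even if it held, an upper bound on $\log^+|\Pi|$ gives no upper bound on $\log^+|1/\Pi|$.

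Consequently the hypothesis as literally formulated (the original $\Pi$ on $|\theta-\pi|\le\eps_1$) is not the quantity that enters on the western arcs. In the running example $m=n=0$, $\Pi=1/S_{2}(a_1-s;\tau)$ has its poles in $\Lp$. By contrast, $\Pi'=S_{2}(b_1-s;\tau)$ has its poles exactly on $\Lm$, where $\phi$ itself blows up.

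Your fallback is what the argument actually requires: impose the growth bound on $\Pi'$ over the western arcs, i.e.\ on the reflected kernel's $\Pi$ over its eastern arcs. It is also what the paper's one-line ``apply the same argument after $s\mapsto-s$'' tacitly uses. With that reading your proof is complete and coincides with the paper's.
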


\begin{proof}
On the eastern arc, \eqref{eq:split} and \eqref{eq:tildeasym} give
$\ln|\Gs(s)|\le\frac{N}{2\tau}R^{2}\ln R\cos2\theta+O(R^{2})+\ln|\Pi(s)|$.
Since $N<0$ and $\cos2\theta\ge\cos2\eps_{1}>0$ there, and since
$\ln|e^{\pi\alpha s^{2}/\tau}z^{-s}|=O(R^{2})$ while
$\log^{+}|\Pi(s)|=o(R^{2}\ln R)$ by
\textup{(H$^{\sup}_{\mathrm{arc}}$)}, the term $R^{2}\ln R$ dominates
and \eqref{eq:arcbound} follows with any
$c<\frac{|N|\cos2\eps_{1}}{2\tau}$. For the western arcs apply the same
argument after the substitution $s\mapsto-s$, which by the first identity of
\cite[Eq.~(15)]{KK2024} replaces $\Gs^{m,n}_{p,q}(\va;\vb\,|\,\cdot\,)$ by
$\Gs^{n,m}_{q,p}(1+\tau-\vb;1+\tau-\va\,|\,\cdot\,)$ and leaves $N$ unchanged.
\end{proof}

We now show that the integral form does the same job, and in doing so produces
the sequence of radii instead of assuming it. This is the ``adaptive arc''
statement: one does not choose the closing radii in advance, one selects them
from an annulus after the fact.

\begin{lemma}[Adaptive radius selection]\label{lem:adaptive}
Let $N<0$ and assume \eqref{eq:generic}. Then:
\begin{enumerate}[label=\textup{(\roman*)},leftmargin=2.4em,itemsep=2pt]
\item $M(r)<\infty$ for all but countably many $r>0$, and
$\int_{R}^{2R}M(r)\,dr<\infty$ for every $R>0$.
\item If \textup{(H$^{\mathrm{int}}_{\mathrm{arc}}$)} holds then there is a
sequence $R_{j}\to\infty$ along which
\begin{equation}\label{eq:arcbound2}
R_j\int_{|\theta|\le\eps_{1}}
\big|\phi(R_j e^{i\theta})z^{-R_j e^{i\theta}}\big|\,d\theta
\;\longrightarrow\;0 ,
\end{equation}
and the same with $|\theta-\pi|\le\eps_{1}$. Consequently the conclusion of
Lemma~\ref{lem:arcs} --- that the arc contribution to a closed contour vanishes
along a sequence of radii --- holds, and with it Theorems~\ref{thm:B} and
\ref{thm:D}.
\end{enumerate}
\end{lemma}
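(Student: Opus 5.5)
For (i), I will first locate the singularities of $\Pi$. Each $S_2(c-s;\tau)$ is meromorphic in $s$. Under \eqref{eq:generic} (in particular $\tau\notin\Q$), its zeros and poles are simple and lie on the finitely many horizontal lines $\Ima s=\Ima c$, at points $c\pm(\text{lattice points of }\Z_{\ge0}+\tau\Z_{\ge0})$ up to the shifts $1,\tau$. So $\Pi$ has a discrete set of at most simple poles $Z$, and any compact annulus $\{R\le|s|\le 2R\}$ contains only finitely many of them.

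A circle $|s|=r$ passes through a point of $Z$ only for the countably many values $r\in\{|\zeta|:\zeta\in Z\}$. For every other $r$, $|\Pi(re^{i\theta})|$ is continuous on the compact $\theta$-arcs, so $M(r)<\infty$. For the integral I will pass to polar coordinates: $\int_R^{2R}M(r)\,dr=\int_{A}|\Pi(s)|\,|s|^{-1}\,dA(s)$, where $A$ is the union of the two sectorial pieces of the annulus. Near a simple pole $\zeta$ we have $|\Pi(s)|\le C|s-\zeta|^{-1}$, and $|s-\zeta|^{-1}$ is locally area-integrable in $\R^2$ (this is the remark made before the lemma). Away from $Z$ the integrand is bounded, and there are only finitely many poles in $A$, so the integral is finite.

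For (ii), I will begin from the estimate in the proof of Lemma~\ref{lem:arcs}, which uses only \eqref{eq:split}, \eqref{eq:tildeasym} and the trivial bounds on $e^{\pi\alpha s^2/\tau}z^{-s}$, not \Harc. It gives, uniformly for $|\theta|\le\eps_1$ and $r$ large,
\[
|\phi(re^{i\theta})z^{-re^{i\theta}}|\le |\Pi(re^{i\theta})|\exp\!\big(-c_0 r^2\ln r\big),\qquad c_0=\tfrac{|N|\cos2\eps_1}{4\tau}.
\]
Hence $I(r):=r\int_{|\theta|\le\eps_1}|\phi z^{-s}|\,d\theta\le rM(r)e^{-c_0r^2\ln r}$. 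For $r\in[R,2R]$ this is at most $2R\,e^{-c_0R^2\ln R}M(r)$. Integrating over $[R,2R]$ and applying (H$^{\mathrm{int}}_{\mathrm{arc}}$) gives
\[
\int_R^{2R}I(r)\,dr\le 2R\exp\!\big(-c_0R^2\ln R+o(R^2\ln R)\big)=:\delta(R)R,
\]
with $\delta(R)\to0$. By Chebyshev (mean value), the set of $r\in[R,2R]$ with $I(r)\le 2\delta(R)$ has measure at least $R/2$. Intersecting with the full-measure set where $M(r)<\infty$ from (i), and doing the same for the western integral, I choose $R_j\in[2^j,2^{j+1}]$ outside a null set such that both the eastern and western quantities are $\le 2\delta(2^j)$. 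Because each bad set has measure at most $R/2$, a common good radius exists after adjusting constants, for instance by taking a threshold $4\delta$ so that each bad set has measure at most $R/4$. This gives \eqref{eq:arcbound2}.

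The consequence follows because the arc integral is bounded by $R_j\int|\phi z^{-s}|\,d\theta$. The remaining rays of a closed contour lie in $S^\pm_{\eps_0}$-type regions covered by Proposition~\ref{prop:sectors}, so the contributions of the closing arcs tend to $0$ exactly as in Lemma~\ref{lem:arcs}. Theorems~\ref{thm:B} and \ref{thm:D} use only that conclusion.

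The step I expect to be the main obstacle is the uniformity of \eqref{eq:tildeasym} near $\arg s=0$ and $\pi$ in the first display, which gives the pointwise bound with $\Pi$ factored out. It is supplied by Lemma~\ref{lem:reduction}, applied on the western arcs via the $s\mapsto-s$ symmetry. A secondary point to check is that the Chebyshev selection avoids the countable exceptional set, which is immediate since that set has measure zero.
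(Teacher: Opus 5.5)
Your proof follows the paper's. Part (i) is the same polar-coordinates and area-integrability argument. In (ii) the only difference is cosmetic: the paper applies the mean-value inequality once to $M$, which already contains both arcs, to get a single $r_R\in[R,2R]$ with $M(r_R)\le R^{-1}\int_R^{2R}M$, and then inserts the pointwise bound at that radius; you instead integrate the pointwise bound first and apply Chebyshev to the eastern and western arc quantities separately.

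One caveat sits exactly at the step you flag, and it is equally present in the paper, whose proof treats only the eastern arc and defers the western one to the $s\mapsto-s$ substitution of Lemma~\ref{lem:arcs}. After that substitution, Lemma~\ref{lem:reduction} bounds the western arcs by the reflected product $\prod_{j\le n}S_2(1+\tau-a_j+s;\tau)/\prod_{j>m}S_2(1+\tau-b_j+s;\tau)$, not by $\Pi(s)$. Indeed, under \eqref{eq:generic} $\Pi$ is regular at the points of $\Lm$, while $\phi$ has poles there, so $|\phi z^{-s}|\le|\Pi(s)|e^{-c_0r^2\ln r}$ fails near them. The western half of $M$ therefore has to be built from that reflected product for either argument to go through.
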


\begin{proof}
(i) $\Pi$ is a finite product of double sines and their reciprocals, hence
meromorphic in $\C$; under \eqref{eq:generic} its poles are simple and form a
discrete set $Z$. Only countably many circles $|s|=r$ meet $Z$, and off those
$M(r)$ is a continuous function on a compact angular interval, hence finite. For
the second claim, write the annular sector integral in polar coordinates:
\[
\int_{R}^{2R}\!\!M(r)\,dr\le\frac1R\int_{R}^{2R}\!\!\int_{\text{arcs}}
|\Pi(re^{i\theta})|\,r\,d\theta\,dr
=\frac1R\iint_{A(R)}|\Pi|\,dA ,
\]
$A(R)$ being the union of the two annular sectors. Near a simple pole $s_{0}$ one
has $|\Pi(s)|\le C_{s_{0}}|s-s_{0}|^{-1}$, and $|s-s_{0}|^{-1}$ is locally
integrable with respect to \emph{area}; since $A(R)$ is compact and contains
finitely many poles, the area integral is finite.

(ii) By (i) and the mean value inequality there is, for each $R$, a radius
$r_{R}\in[R,2R]$ with
$M(r_{R})\le\frac1R\int_{R}^{2R}M(r)dr\le\exp\big(o(R^{2}\ln R)\big)$. On the
eastern arc of radius $r_{R}$, \eqref{eq:split} and \eqref{eq:tildeasym} give
\[
\big|\phi(s)z^{-s}\big|\le|\Pi(s)|\,
\exp\!\Big(-\tfrac{|N|\cos2\eps_{1}}{2\tau}r_{R}^{2}\ln r_{R}+O(r_{R}^{2})\Big),
\]
the exponential factor being independent of $\theta$ up to the $O(r^{2})$ term.
Hence the left side of \eqref{eq:arcbound2} at radius $r_{R}$ is at most
$r_{R}M(r_{R})\exp(-c\,r_{R}^{2}\ln r_{R}+O(r_{R}^{2}))$, which tends to $0$
because $r_{R}\ge R$ and $\log^{+}M(r_{R})=o(R^{2}\ln R)$. Taking $R=3^{j}$ and
$R_{j}=r_{2^{j}}$ gives the sequence. The proofs of Theorems~\ref{thm:B} and
\ref{thm:D} use \eqref{eq:arcbound} only through the vanishing of the arc
integral, so \eqref{eq:arcbound2} may be substituted for it verbatim.
\end{proof}

\begin{remark}[On \Harc]\label{rem:Harc}
In the model case in which every $c$ is a rational combination of $1$ and
$\tau$, the logarithm of $\Pi$ reduces to a sum of the form
$\sum_{j\le M}\log|2\sin(\pi j\theta)|^{-1}$ with $\theta\in\{1/\tau,1\}$ ---
the classical small-denominator sum of the metric theory of continued fractions,
whose size is $O(M\ln M)$ for $\theta$ of bounded type --- and whose \emph{mean}
over a period vanishes, since $\int_{0}^{1}\ln|2\sin\pi t|\,dt=0$. The vanishing
mean is the reason one expects the $o(R^{2}\ln R)$ of \Harc to be very far from
sharp. We do \emph{not} claim the reduction in general, we make no attempt to
determine the optimal form of the bound, and we do not claim a proof of \Harc for
any specific $\tau$.

Three numerical facts, all for the running example \eqref{eq:example}, bear on
this. \emph{(a)} The split \eqref{eq:split} itself is confirmed: computing
$\ln|\Pi|$ from the double sine directly and from
$\ln|\Gs|-\Rea L\ln2\pi-\ln|\widetilde\Gs|$ agrees to
$7.6\times10^{-21}$ (check R6a). \emph{(b)} On the eastern arcs
$\max_{|\theta|\le\eps_{1}}\ln|\Pi(Re^{i\theta})|$ stays \emph{bounded} --- it
takes the values $1.41$, $0.52$, $-0.13$, $0.12$, $0.45$ at $R=4,6,8,10,12$ ---
so it grows neither like $R\ln R$ nor like $R^{2}$, and is consistent there with
exponent $0$ in \Harc (check R6). A nine-point angular sample cannot exclude a spike
between samples, so this is evidence, not proof. \emph{(c)} The average that
\textup{(H$^{\mathrm{int}}_{\mathrm{arc}}$)} bounds is small and falls
super-exponentially: the mean of the eastern-arc integral over $r\in[R,2R]$ is
$1.4\times10^{-4}$, $5.7\times10^{-16}$, $9.7\times10^{-40}$ at $R=3,5,7$
(check R5). Finally, the sup form itself was probed directly:
$\min_{R'\in[R,R+1]}\max_{|\arg s|\le\eps_{1}}|\phi(s)z^{-s}|$ falls from
$2.1\times10^{-6}$ at $R=3$ to $6.4\times10^{-97}$ at $R=9$ (check R4). A finite
set of radii cannot verify a hypothesis about $R\to\infty$, and we do not present
them as doing so.
\end{remark}

\begin{theorem}[Residue expansions]\label{thm:B}
Let $N<0$, assume \eqref{eq:generic}, and assume \Harc. Then, for
every $z$ in the domain of Theorem~\ref{thm:A},
\begin{equation}\label{eq:resexp}
\Kr(z)=-\lim_{j\to\infty}\!\!\sum_{\substack{s_{0}\in\Lp\\ |s_{0}|<R_{j}}}\!\!
\Res_{s=s_{0}}\phi(s)z^{-s},
\qquad
\Kl(z)=+\lim_{j\to\infty}\!\!\sum_{\substack{s_{0}\in\Lm\\ |s_{0}|<R_{j}}}\!\!
\Res_{s=s_{0}}\phi(s)z^{-s},
\end{equation}
the limits existing along the sequence $R_{j}$ of
\textup{(H$^{\sup}_{\mathrm{arc}}$)}, or along the sequence produced by
Lemma~\ref{lem:adaptive} if instead
\textup{(H$^{\mathrm{int}}_{\mathrm{arc}}$)} is assumed.

The summation order matters and is part of the statement. $\Lp$ and $\Lm$ are
two-dimensional lattices, so an unordered symbol $\sum_{s_{0}\in\Lp}$ has no
meaning unless the family is absolutely summable; what the proof delivers is the
\emph{exhaustion induced by the closing contours}, namely the limit of the
partial sums over $|s_{0}|<R_{j}$. If in addition the family is absolutely
summable --- which is what the Diophantine discussion of
Remark~\ref{rem:diophantine} is about --- the limit is independent of the
exhaustion and one may write the sums without qualification. Explicitly, for $s_0=a_j+l\tau+k\in\Lp$ with $\rho=l\tau+k$,
\begin{equation}\label{eq:resplus}
\Res_{s=s_0}\phi(s)z^{-s}
=\frac{-1}{G'(-\rho;\tau)}\;
\widehat{\Gs}_{j}(s_{0})\;e^{\pi\alpha s_{0}^{2}/\tau}z^{-s_{0}},
\end{equation}
where $\widehat{\Gs}_{j}$ denotes $\Gs$ with the factor $G(a_j-s;\tau)$ deleted
from the denominator, and $G'(-\rho;\tau)$ is given by \eqref{eq:Gprime}. For
$s_{0}=b_{j}-l\tau-k\in\Lm$ with $\rho'=(l-1)\tau+(k-1)$ the same formula holds
with $-1/G'(-\rho;\tau)$ replaced by $+1/G'(-\rho';\tau)$ and
$\widehat{\Gs}_{j}$ the analogous deletion of $G(1+\tau-b_j+s;\tau)$.
\end{theorem}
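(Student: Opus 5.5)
We close the hairpin $L_{-\eps_0,\eps_0}$ defining $\Kr$ by a circular arc of radius $R_j$ centred at the origin, and apply the residue theorem. Concretely, fix $\sigma_0\in(\lambda_\sharp,\lambda_\flat)$ and use the canonical representative of Lemma~\ref{lem:isotopy}: the two rays $\sigma_0+re^{\pm i\eps_0}$, with small detours around the finitely many points of $P$ they meet. Each ray meets $|s|=R_j$ at a single point. Because $\sigma_0$ is fixed, the argument of that point tends to $\pm\eps_0$ as $R_j\to\infty$, so for $j$ large it lies in $|\theta|<\eps_1$; this is where the margin $\eps_1-\eps_0$ of \eqref{eq:epsconv} is used.

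The truncated hairpin, together with the arc $\{R_je^{i\theta}\}$ joining the two intersection points through $\theta=0$, bounds a bounded region. This region contains exactly the points of $\Lp$ with $|s_0|<R_j$ lying to the right of the hairpin. It contains no point of $\Lm$, by (A3) and Proposition~\ref{prop:combs}. Since $\Lp$ lies on finitely many horizontal lines, any point of $\Lp$ with $|s_0|<R_j$ that lies outside the hairpin's wedge has modulus $O(1)$ and is enclosed once we pick the canonical contour suitably. Because the hairpin is traversed with $\Lp$ on its right, the closed contour runs clockwise, and we obtain
\[
\int_{\gamma\cap\{|s|\le R_j\}}+\int_{\mathrm{arc}}=-2\pi i\sum_{s_0\in\Lp,\,|s_0|<R_j}\Res .
\]
The tails of $\gamma$ beyond $R_j$ tend to $0$ by Theorem~\ref{thm:A}. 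The arc integral is at most $R_j\cdot 2\eps_1\cdot\sup|\phi z^{-s}|$ and tends to $0$ by Lemma~\ref{lem:arcs} under (H$^{\sup}_{\mathrm{arc}}$); under the integral form, Lemma~\ref{lem:adaptive}\,(ii) gives the same conclusion along its sequence. Dividing by $2\pi i$ yields the formula for $\Kr$ in \eqref{eq:resexp}, with the limit existing along $R_j$ because the left side converges. The computation for $\Kl$ is the mirror image using the western arcs, with counterclockwise orientation and hence a $+$ sign. Alternatively, one can reduce to the $\Kr$ case via $s\mapsto -s$ and the symmetry of \cite[Eq.~(15)]{KK2024}, which interchanges the roles of $\Lp$ and $\Lm$.

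Next we compute the residues. Under \eqref{eq:generic}, each $s_0=a_j+\rho\in\Lp$ is a zero of exactly one denominator factor, $G(a_j-s;\tau)$, at the simple lattice point $a_j-s_0=-\rho$, and no numerator factor vanishes there. Writing $G(a_j-s)=-G'(-\rho)(s-s_0)+O((s-s_0)^2)$ and using Lemma~\ref{lem:Gprime} gives \eqref{eq:resplus}, where the minus sign comes from the chain rule. For $s_0=b_j-l\tau-k\in\Lm$, the vanishing factor is $G(1+\tau-b_j+s)$, whose argument at $s_0$ is $1+\tau-l\tau-k=-\rho'$. Its derivative in $s$ is $+G'(-\rho')$, which gives the stated sign flip.

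The main obstacle is the geometric bookkeeping of the closing step. We must check that the arc between the two ray endpoints stays inside $|\theta|\le\eps_1$, where the hypothesis \Harc\ applies, and never enters the intermediate zone $\eps_1<|\theta|<\pi/4$ without control. In that zone \eqref{eq:Gasym} applies directly, so this is harmless. We must also ensure that the enclosed set is exactly $\{s_0\in\Lp:|s_0|<R_j\}$ up to a finite, $j$-independent discrepancy, and that this discrepancy vanishes for large $j$. We also need the arc to avoid the poles of $\phi$. Under (H$^{\sup}$) the $R_j$ are given and we may perturb them slightly, which is harmless. Under (H$^{\mathrm{int}}$), the adaptive radius $r_R$ lies off the countable exceptional set.
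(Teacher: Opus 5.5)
Your proof is correct and follows the paper's argument essentially step for step. You use the canonical two-ray representative from $\sigma_0$, close it by an arc that lies in $|\arg s|\le\eps_1$ thanks to the margin $\eps_1-\eps_0$, and apply Cauchy's theorem with clockwise (resp.\ counterclockwise) orientation. The tails are controlled by Theorem~\ref{thm:A}, the arc by Lemma~\ref{lem:arcs} or Lemma~\ref{lem:adaptive}, and the residues are read off from Lemma~\ref{lem:Gprime} via the sign of the derivative of the vanishing factor's argument.

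Your extra bookkeeping covers two points the paper's proof leaves implicit. The first is the finitely many points of $\Lp$ outside the straight wedge when the parameters are complex. The second is the closing arcs avoiding poles; this needs no perturbation of $R_j$, since a pole on the arc would make the supremum in \textup{(H$^{\sup}_{\mathrm{arc}}$)}, or $M(r_R)$, infinite.
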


\begin{proof}
By Definition~\ref{def:four} the contour of $\Kr$ leaves the domain containing
large positive reals --- which by Proposition~\ref{prop:combs} contains all of
$\Lp$ --- on its \emph{right}, so it encircles $\Lp$ in the negative sense.
Realise $\gamma$ in the normal form of Lemma~\ref{lem:isotopy}: two rays from
$\sigma_{0}$ at angles $\pm\eps_{0}$. Such a ray meets the circle $|s|=R$ at a
point whose \emph{polar} angle is $\pm\eps_{0}+O(|\sigma_{0}|/R)$, so by
\eqref{eq:epsconv} there is $R_{0}$ with the property that for $R\ge R_{0}$ the
shorter arc of $|s|=R$ joining the two intersection points lies entirely in
$\{|\arg s|\le\eps_{1}\}$ --- this is exactly what the margin
$\eps_{1}>\eps_{0}$ buys. Close $\gamma$ at radius $R_{j}\ge R_{0}$ by that arc,
to which Lemma~\ref{lem:arcs} applies, and apply Cauchy's theorem to the
resulting bounded region, which contains exactly the points of $\Lp$ with
$|s_0|<R_{j}$. Letting
$j\to\infty$, the arc contribution vanishes by \eqref{eq:arcbound} and the ray
contributions converge absolutely by Theorem~\ref{thm:A}; the first identity of
\eqref{eq:resexp} follows, together with the convergence of the partial sums
over $|s_{0}|<R_{j}$. The second identity is the same argument on the western
arcs.

For \eqref{eq:resplus}, put $u=s-s_0$; then $a_j-s=-\rho-u$, so by
Lemma~\ref{lem:Gprime}
$G(a_j-s;\tau)=-G'(-\rho;\tau)\,u+O(u^{2})$, whence
$1/G(a_j-s;\tau)=-1/\big(G'(-\rho;\tau)u\big)+O(1)$. For $\Lm$ the vanishing
factor is $G(1+\tau-b_j+s;\tau)$, whose argument has derivative $+1$ in $s$;
this is the origin of the sign difference between the two cases.
\end{proof}

\begin{remark}[Diophantine caveat]\label{rem:diophantine}
The terms of \eqref{eq:resexp} carry a super-exponential factor
$\exp\big(-\tfrac{|N|}{2\tau}\rho^{2}\ln\rho+\tfrac{\pi\Rea\alpha}{\tau}\rho^{2}
+O(\rho)\big)$, but also products of reciprocal sines arising from
$D_{\tau}(l,k)$ in \eqref{eq:Gprime}: these are the same small denominators that
make the series representation of the density of the supremum of a stable process
converge only for $\tau$ outside a dense, Lebesgue-null, Hausdorff-dimension-zero
set of Liouville type \cite{HubalekKuznetsov2011,Kuznetsov2013,%
HackmannKuznetsov2013}. The super-exponential factor here is much stronger than
the $e^{-c\rho}$ available in the balanced case of \cite{KK2024}, so we expect
absolute summability for a much larger set of $\tau$; but we have verified it
only for $\tau=(\sqrt5-1)/2$, and a sharp Diophantine criterion remains open.

We deliberately do \emph{not} add the usual escape clause that, absent
convergence, \eqref{eq:resexp} is still a complete asymptotic expansion. That
statement requires a remainder estimate --- a bound on the contour integral over
the truncated part --- which we have not proved, and which is not a formal
consequence of anything above.
\end{remark}

\section{The linear relation and the collapse for real data}

Fix $\sigma_{0}$ with $\lambda_{\sharp}<\sigma_{0}<\lambda_{\flat}$ and any
$\eps\in(0,\pi/4)$ --- no closing arc occurs here, so the convention
\eqref{eq:epsconv} is not needed --- and define the four rays emanating from
$\sigma_{0}$
\[
R^{E}_{\pm}=\int_{\sigma_{0}}^{\;\sigma_{0}+\infty e^{\pm i\eps}}\phi(s)z^{-s}ds,
\qquad
R^{W}_{\pm}=\int_{\sigma_{0}}^{\;\sigma_{0}+\infty e^{\pm i(\pi-\eps)}}\phi(s)z^{-s}ds .
\]
Each of the four lies entirely in a decay sector, so each converges absolutely by
Theorem~\ref{thm:A}; \emph{no estimate outside $S^{\pm}_{\eps}$ is used}, and in
particular Lemma~\ref{lem:arcs} and hypothesis \Harc are not
needed here. Because $\sigma_{0}$ separates the two combs, the concatenations
below are admissible in the sense of Definition~\ref{def:admissible} and realise
the four classes of Definition~\ref{def:four}:
\begin{equation}\label{eq:fourpieces}
\begin{aligned}
2\pi i\,\Kr&=-R^{E}_{-}+R^{E}_{+}, &\qquad
2\pi i\,\Kl&=-R^{W}_{-}+R^{W}_{+},\\
2\pi i\,\Ku&=-R^{E}_{-}+R^{W}_{+}, &\qquad
2\pi i\,\Kd&=-R^{W}_{-}+R^{E}_{+}.
\end{aligned}
\end{equation}

\begin{theorem}[The linear relation]\label{thm:C}
Let $N<0$. Then, identically in $z$ and in all parameters,
\begin{equation}\label{eq:relation}
\Ku+\Kd=\Kr+\Kl .
\end{equation}
The identity is \emph{unconditional}: it uses only Theorem~\ref{thm:A}.
Moreover, setting
$\widehat A=\tfrac{1}{2\pi i}\big(R^{W}_{+}-R^{E}_{+}\big)$ and
$\widehat B=\tfrac{1}{2\pi i}\big(R^{W}_{-}-R^{E}_{-}\big)$,
\begin{equation}\label{eq:AB}
\Ku=\Kr+\widehat A=\Kl+\widehat B,\qquad
\Kd=\Kl-\widehat A=\Kr-\widehat B,\qquad
\widehat A-\widehat B=\Kl-\Kr .
\end{equation}
All of \eqref{eq:relation} and \eqref{eq:AB} is unconditional. If in addition
\Harc holds and the exhaustions of Theorem~\ref{thm:B} converge,
then and only then may one also write
\begin{equation}\label{eq:ABres}
\widehat A-\widehat B=\Kl-\Kr
=\lim_{j\to\infty}\!\!\sum_{\substack{s_{0}\in\Lp\cup\Lm\\ |s_{0}|<R_{j}}}\!\!
\Res_{s=s_{0}}\phi(s)z^{-s}.
\end{equation}
\end{theorem}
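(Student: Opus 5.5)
The plan is to derive everything from the four-ray decomposition \eqref{eq:fourpieces} by linear algebra, with the analytic input confined to two facts. First, each ray integral $R^{E}_{\pm},R^{W}_{\pm}$ converges absolutely; this is Theorem~\ref{thm:A}, since each ray lies in a decay sector at angular distance $\eps$ from the real axis. Second, the concatenations in \eqref{eq:fourpieces} are admissible contours representing the four classes of Definition~\ref{def:four}.

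I would settle the second fact first. Each concatenation is a simple proper curve through $\sigma_{0}$. By Proposition~\ref{prop:combs} the vertical line $\Rea s=\sigma_0$ separates $\Lm$ from $\Lp$, and a ray from $\sigma_0$ at angle $\pm\eps$ or $\pm(\pi-\eps)$ meets each horizontal support line at most once. If it meets a pole, I would shift $\eps$ slightly, which the discreteness of $P$ allows. Orientation from the $\theta^-$ end to the $\theta^+$ end then gives (A2) and (A3). Because only the terminal sectors matter, Lemma~\ref{lem:isotopy}(ii) identifies each concatenation with the corresponding $K^{\bullet}$. For the orientation and sign bookkeeping: the $\theta^-$ ray is traversed inward, which gives the minus sign, and the $\theta^+$ ray outward. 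For example, $\Ku$ runs from SE to NW, so $2\pi i\,\Ku=-R^{E}_{-}+R^{W}_{+}$.

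Once \eqref{eq:fourpieces} holds, the algebra is immediate. Adding the two $\Ku,\Kd$ lines gives $-R^E_--R^W_-+R^W_++R^E_+$, which is also the sum of the $\Kr,\Kl$ lines, so \eqref{eq:relation} follows. The identities \eqref{eq:AB} come from direct subtraction:
\[
2\pi i(\Ku-\Kr)=R^W_+-R^E_+,\qquad 2\pi i(\Ku-\Kl)=R^W_- - R^E_-,
\]
and similarly for $\Kd$. Subtracting these two gives $\widehat A-\widehat B=\Kl-\Kr$. None of this uses \Harc, so it holds for all $z$ and all parameters for which Theorem~\ref{thm:A} applies.

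For \eqref{eq:ABres}, I would insert the residue expansions of Theorem~\ref{thm:B}. That gives $\Kl-\Kr$ as the limit of the sum of $\Res$ over $\Lm$ plus the sum over $\Lp$, along $R_j$. The partial sums can be combined because both limits exist along the same sequence. To ensure this, I would choose the sequence so that the eastern and western arc bounds both hold at $R_j$. Lemma~\ref{lem:arcs} and Lemma~\ref{lem:adaptive} state the bounds simultaneously, but for the adaptive version I would select one $r_R$ that is good for both arcs by applying the mean value argument to the combined $M(r)$, which already sums the two arcs.

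The main obstacle is only this bookkeeping: the admissibility and sign conventions of the concatenations, and the common exhaustion in \eqref{eq:ABres}. The analytic content is entirely carried by Theorems~\ref{thm:A} and~\ref{thm:B}.
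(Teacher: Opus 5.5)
Your approach is the paper's. \eqref{eq:relation} and \eqref{eq:AB} are algebra on the four absolutely convergent integrals of \eqref{eq:fourpieces}, your signs and subtractions are right, and \eqref{eq:ABres} is Theorem~\ref{thm:B} added up. Your point that one exhaustion must serve both combs is also correct. It is automatic under \textup{(H$^{\sup}_{\mathrm{arc}}$)}, and under \textup{(H$^{\mathrm{int}}_{\mathrm{arc}}$)} it comes from the combined $M(r)$ of Lemma~\ref{lem:adaptive}; the paper leaves this implicit.

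\emph{The gap: deriving \textup{(A3)} from $\Rea s=\sigma_{0}$.} You deduce \textup{(A3)} for the straight-ray concatenations from the fact that the line $\Rea s=\sigma_{0}$ separates the combs. That is not what \textup{(A3)} asks. For $\Kr$ it needs all of $\Lp$ inside the thin sector $|\arg(s-\sigma_{0})|<\eps$, and for $\Kd$ it needs $\Lp$ below the line $\sigma_{0}+\R e^{i\eps}$.

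For complex data this can fail. A ray from $\sigma_{0}$ crosses a support line $\Ima s=\Ima a_{j}\neq0$ at a finite point, and the comb points before the crossing lie on the wrong side. ``At most once'' only makes them finitely many, and perturbing $\eps$ does not remove them.

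\emph{An explicit failure.} Take one $j>n$ with $\Ima a_{j}\ge\Rea a_{j}-\lambda_{\sharp}$ and all other parameters real. Then $\arg(a_{j}-\sigma_{0})>\pi/4>\eps$ for every admissible $\sigma_{0}$, so no straight $\Kr$-hairpin is admissible. The straight-ray integrals for $\Kr$ and $\Kd$ both miss the same finite residue sum $E$, because the defect sits on their shared ray $R^{E}_{+}$. Under \eqref{eq:generic}, $E$ is not identically zero. The straight-ray integrals for $\Ku$ and $\Kl$ are correct. Hence \eqref{eq:relation} survives, but $\Ku-\Kr=\widehat A-E$: the relations of \eqref{eq:AB} involving $\widehat A$ are false with $\widehat A$ built from straight rays.

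\emph{Where your argument is complete.} For real data all poles lie on $\R$. The four rays meet $\R$ only at $\sigma_{0}\notin\Lp\cup\Lm$, so your argument works as written.

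\emph{Repair in general.} Let each piece first run vertically from $\sigma_{0}$ to $\sigma_{0}\pm iH$, with $H$ larger than every $|\Ima a_{j}|$ and $|\Ima b_{j}|$. Only then let it continue as a ray of the prescribed angle. The vertical segments lie in the pole-free strip $\lambda_{\sharp}<\Rea s<\lambda_{\flat}$, and all four concatenations become admissible. Your algebra then goes through unchanged, and \eqref{eq:AB} holds with $\widehat A,\widehat B$ defined by these paths. The paper's sentence just before \eqref{eq:fourpieces} makes the same admissibility claim you do, so this gap is inherited from the paper rather than introduced by you.
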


\begin{proof}
Adding the two expressions on the second line of \eqref{eq:fourpieces} and
comparing with the sum of the two on the first line gives \eqref{eq:relation};
subtracting them pairwise gives \eqref{eq:AB}; all of this is algebra on four
absolutely convergent integrals. Formula \eqref{eq:ABres} is a different
statement: it identifies $\widehat A-\widehat B$ with a residue sum, which is
Theorem~\ref{thm:B}, and is therefore conditional on \Harc.
\end{proof}

\begin{remark}[Dimension: what is and is not proved]\label{rem:three}
Relation \eqref{eq:relation} gives
\[
\dim\operatorname{span}\big\{\Kr,\Kl,\Ku,\Kd\big\}\le3 .
\]
We do \emph{not} prove equality: that would require the linear independence of
three of the four, which pairwise distinctness does not imply.

What can be said is this. The obstruction to a second relation is the
transcendent $\widehat A=\Ku-\Kr$, an integral along a ray pair hugging a decay
sector; it is not a residue sum, and we know no closed form for it. Numerically,
for the parameters of \S\ref{sec:numerics} the evaluation determinant
$\det\big[K_{i}(z_{r})\big]_{i,r=1}^{3}$ with
$(K_1,K_2,K_3)=(\Kr,\Kl,\Ku)$ and $z\in\{0.5,1.6,4.0\}$ has modulus
$2.4\times10^{-3}$ relative to the cube of the largest entry (check R3 of
Table~\ref{tab:R}), so no linear relation among the three can hold at those
three points simultaneously. That is evidence, not a proof: a proof would need
an analytic non-vanishing argument, for instance three asymptotic regimes in
which the three functions have distinct leading behaviour and the corresponding
coefficient matrix is invertible. We leave it open
(\S\ref{sec:open}, item~\ref{op:dim}).
\end{remark}

\begin{corollary}[Reality collapse]\label{cor:real}
Suppose $a_j,b_j,\tau,\alpha$ are all real and $z>0$. Then
$\overline{\phi(\bar s)}=\phi(s)$, and consequently
\begin{equation}\label{eq:realcollapse}
\Kr,\ \Kl\in\R,\qquad \Kd=\overline{\Ku},\qquad
\Rea\Ku=\tfrac12\big(\Kr+\Kl\big).
\end{equation}
Hence \emph{at most} three real degrees of freedom survive --- $\Kr$, $\Kl$ and
$\Ima\Ku$ --- the real part of $\Ku$ being forced by \eqref{eq:relation}.
Whether all three are independent is the question left open in
Remark~\ref{rem:three}.
\end{corollary}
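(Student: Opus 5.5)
The plan is to derive the conjugation symmetry $\overline{\phi(\bar s)}=\phi(s)$ first and then read off \eqref{eq:realcollapse} from the ray decomposition \eqref{eq:fourpieces}.

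\emph{Symmetry of the kernel.} For real $\tau>0$ the Weierstrass product of $G(\cdot;\tau)$ has real coefficients, so $\overline{G(\bar w;\tau)}=G(w;\tau)$. With $a_j,b_j$ real, each factor of $\Gs$ in \eqref{eq:Gscript} has the form $G(c\pm s;\tau)$ with $c$ real, and therefore $\overline{\Gs(\bar s)}=\Gs(s)$. Since $\alpha$ and $\tau$ are real, $e^{\pi\alpha s^2/\tau}$ has the same symmetry. For $z>0$ on the principal sheet, $\overline{z^{-\bar s}}=z^{-s}$. Hence $f(s):=\phi(s)z^{-s}$ satisfies $\overline{f(\bar s)}=f(s)$.

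\emph{Conjugating the rays.} I would use the rays from the real point $\sigma_0$ introduced before Theorem~\ref{thm:C}. Substituting $s=\bar t$ in $R^{E}_{-}$ maps the ray at angle $-\eps$ onto the ray at angle $+\eps$. Using $\overline{f(\bar t)\,d\bar t}=f(t)\,dt$, this gives $\overline{R^{E}_{-}}=R^{E}_{+}$, and in the same way $\overline{R^{W}_{-}}=R^{W}_{+}$. All four integrals converge absolutely by Theorem~\ref{thm:A}, so these manipulations are legitimate.

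\emph{Consequences.} Insert these into \eqref{eq:fourpieces}:
\[
2\pi i\,\Kr=R^{E}_{+}-\overline{R^{E}_{+}}=2i\,\Ima R^{E}_{+},
\]
so $\Kr=\Ima R^{E}_{+}/\pi$ is real. By the same computation $\Kl$ is real. Next,
\[
\overline{2\pi i\,\Ku}=-\overline{R^{E}_{-}}+\overline{R^{W}_{+}}=-R^{E}_{+}+R^{W}_{-}=-2\pi i\,\Kd .
\]
Dividing by $\overline{2\pi i}=-2\pi i$ gives $\overline{\Ku}=\Kd$. Finally, Theorem~\ref{thm:C} gives $\Kr+\Kl=\Ku+\Kd=\Ku+\overline{\Ku}=2\Rea\Ku$, which is the last identity in \eqref{eq:realcollapse}.

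The degrees-of-freedom count then follows. $\Kd$ is determined by $\Ku$, and $\Rea\Ku$ is determined by $\Kr$ and $\Kl$, so only $\Kr$, $\Kl$ and $\Ima\Ku$ remain free.

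The only delicate point is the conjugation symmetry of $G(\cdot;\tau)$, including the branch conventions. This is handled by the product representation: it is entire with real Taylor coefficients for real $\tau$, so no logarithm branch enters. The restriction $z>0$ keeps $z^{-s}=e^{-s\ln z}$ on the real branch.
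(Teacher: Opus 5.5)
Your proof is correct and follows the paper's own argument. The paper also obtains $\overline{\phi(\bar s)}=\phi(s)$ by Schwarz reflection, then notes that reflection in the real axis maps the $\Kr$ and $\Kl$ contours to themselves with reversed orientation and exchanges the $\Ku$ and $\Kd$ contours; your identities $\overline{R^{E}_{-}}=R^{E}_{+}$ and $\overline{R^{W}_{-}}=R^{W}_{+}$ on the realisation \eqref{eq:fourpieces} make that argument explicit, including the sign cancellation between the orientation reversal and $\overline{2\pi i}=-2\pi i$, which the paper leaves implicit.
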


\begin{proof}
All parameters being real, $\Gs$ and $e^{\pi\alpha s^{2}/\tau}$ are real on
$\R$, hence $\overline{\phi(\bar s)}=\phi(s)$ by the Schwarz reflection
principle; and $z^{-\bar s}=\overline{z^{-s}}$ for $z>0$. Reflection in the real
axis maps the contour of $\Kr$ (resp.\ $\Kl$) to itself with reversed
orientation and maps that of $\Ku$ to that of $\Kd$. The first two statements
follow; the third is \eqref{eq:relation}.
\end{proof}

\section{Vanishing theorems}

\begin{theorem}[Vanishing]\label{thm:D}
Let $N<0$ and assume \Harc.
\begin{enumerate}[label=\textup{(\roman*)},leftmargin=2.2em]
\item If $n=p$, i.e.\ $\Lp=\varnothing$, then $\Kr\equiv0$ identically on the
whole domain, and $\Ku+\Kd=\Kl$.
\item If $m=q$, i.e.\ $\Lm=\varnothing$, then $\Kl\equiv0$ identically, and
$\Ku+\Kd=\Kr$.
\end{enumerate}
\end{theorem}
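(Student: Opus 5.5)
The plan is to run the closing-contour argument of Theorem~\ref{thm:B} with an empty pole set. For (i), take the contour of $\Kr$ in the normal form of Lemma~\ref{lem:isotopy}: two rays from some $\sigma_0$ at angles $\pm\eps_0$. When $n=p$ we have $\Lp=\varnothing$ and $\lambda_\flat=+\infty$, so $\sigma_0$ may be any real number exceeding $\lambda_\sharp$. By Proposition~\ref{prop:combs}, $\Lm$ lies in $\Rea s\le\lambda_\sharp<\sigma_0$. Hence the bounded region cut off by the two rays and the arc of radius $R_j$ contains no pole of $\phi(s)z^{-s}$ at all.

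Next, choose $R_j$ from the sequence in (H$^{\sup}_{\mathrm{arc}}$), or from Lemma~\ref{lem:adaptive} if the integral form is assumed. Take $R_j\ge R_0$ so that, as in the proof of Theorem~\ref{thm:B}, the closing arc lies in $\{|\arg s|\le\eps_1\}$. Cauchy's theorem then says that the truncated contour integral equals minus the arc integral. Letting $j\to\infty$:
\begin{itemize}
\item the truncated ray integrals converge to $2\pi i\,\Kr(z)$ by the absolute convergence of Theorem~\ref{thm:A};
\item the arc integral tends to $0$ by \eqref{eq:arcbound}, or by \eqref{eq:arcbound2}.
\end{itemize}
Thus $\Kr(z)=0$ for every $z$ on the Riemann surface of the logarithm. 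This is the statement that $\Kr\equiv0$ identically, not merely on a half-line, because the arc bound holds for every fixed $z$.

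One small point needs care: Lemma~\ref{lem:arcs} assumes nothing about genericity, but Lemma~\ref{lem:adaptive} assumes \eqref{eq:generic}. The only role of \eqref{eq:generic} there is to make the poles of $\Pi$ simple and hence area-integrable. I would note that in the integral form one can avoid it: $\Pi$ is meromorphic, so each pole has finite order. If such a pole has higher order, one would either still appeal to the $L^1$ hypothesis as stated (it is an assumption on $M$) or restrict to \eqref{eq:generic}. I expect this bookkeeping to be the only delicate point; the analytic content is entirely in Lemma~\ref{lem:arcs}.

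The second assertion of (i) is immediate from the unconditional relation \eqref{eq:relation}: $\Ku+\Kd=\Kr+\Kl=\Kl$.

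Part (ii) is the mirror image. One route is to repeat the argument with the western rays at angles $\pm(\pi-\eps_0)$ from a $\sigma_0<\lambda_\flat$, closing on the western arcs covered by Lemma~\ref{lem:arcs}. The other is to transport (i) through the substitution $s\mapsto-s$ used in that lemma's proof, which exchanges the roles of $(m,n,p,q)$ and $(n,m,q,p)$ and preserves $N$. The relation $\Ku+\Kd=\Kr$ again follows from \eqref{eq:relation}.
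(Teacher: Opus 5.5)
Your proposal is correct and follows the paper's proof. You close the contour of $\Kr$ by the eastern arc of radius $R_j$ and note that the enclosed region is pole-free, since $\Lp=\varnothing$ and $\Lm$ lies in $\Rea s\le\lambda_\sharp<\sigma_0$. The arc term then vanishes by \eqref{eq:arcbound} (or \eqref{eq:arcbound2}), (ii) is handled symmetrically, and $\Ku+\Kd=\Kl$ (resp.\ $=\Kr$) follows from the unconditional relation \eqref{eq:relation}. Your side remark is also sound and slightly sharper than the paper: under \textup{(H$^{\mathrm{int}}_{\mathrm{arc}}$)} the finiteness of $\int_R^{2R}M(r)\,dr$ is part of the hypothesis, so the radius selection of Lemma~\ref{lem:adaptive} goes through without \eqref{eq:generic}, which Theorem~\ref{thm:D} indeed does not assume.
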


\begin{proof}
In case (i) the region bounded by $\gamma$ and the eastern arc of radius
$R_{j}$ contains no pole, so Cauchy's theorem equates $\int_{\gamma}$ with the
arc integral, which tends to $0$ by \eqref{eq:arcbound}. Case (ii) is symmetric.
The two consequences are \eqref{eq:relation}, which is unconditional.
\end{proof}

\begin{remark}
Theorem~\ref{thm:D} is strictly stronger than its $N=0$ counterpart
\cite[Thm.~3]{KK2024}, which asserts $K^{0,p}_{p,p}=0$ only for $z>1$ (and
$K^{p,0}_{p,p}=0$ only for $0<z<1$) and is proved by a
Phragm\'en--Lindel\"of argument in the logarithmic case. Here the decay is
super-exponential and the conclusion is global. Note also that $n=p$ forces
$N=p+2m-q$, so $N<0$ requires $q>p+2m$: the vanishing case is non-empty.
\end{remark}

\section{Transformation laws}

\begin{theorem}[Transformations]\label{thm:E}
Let $N<0$ and let $\bullet$ denote any one of $\rightarrow,\leftarrow,\uparrow,
\downarrow$.
\begin{enumerate}[label=\textup{(E\arabic*)},leftmargin=2.6em,itemsep=4pt]
\item \textup{(Shift.)} For every $c\in\C$,
\begin{equation}\label{eq:E1}
K^{\bullet}\!\left(\begin{matrix}c+\va\\ c+\vb\end{matrix}\middle|z;\tau,\alpha\right)
=e^{\pi\alpha c^{2}/\tau}z^{-c}\,
K^{\bullet}\!\left(\begin{matrix}\va\\ \vb\end{matrix}\middle|
e^{-2\pi\alpha c/\tau}z;\tau,\alpha\right).
\end{equation}
\item \textup{(Inversion; interchanges east and west.)}
\begin{equation}\label{eq:E2}
\Kr\!\left(\begin{matrix}\va\\ \vb\end{matrix}\middle|z\right)_{(m,n,p,q)}
=\Kl\!\left(\begin{matrix}1+\tau-\vb\\ 1+\tau-\va\end{matrix}
\middle|z^{-1}\right)_{(n,m,q,p)},
\end{equation}
together with the same identity with $\rightarrow$ and $\leftarrow$
interchanged, while $\Ku$ and $\Kd$ are each mapped to themselves.
\item \textup{(Modular.)} With $A,B$ as in \cite[Eq.~(17)]{KK2024},
\begin{equation}\label{eq:E3}
K^{\bullet}\!\left(\begin{matrix}\va\\ \vb\end{matrix}\middle|z;\tau,\alpha\right)
=A\,\tau\;
K^{\bullet}\!\left(\begin{matrix}\tau^{-1}\va\\ \tau^{-1}\vb\end{matrix}\middle|
(Bz)^{\tau};\tau^{-1},\alpha-\tfrac{N}{2\pi}\ln\tau\right).
\end{equation}
In particular $N$ is invariant under $\tau\mapsto\tau^{-1}$, so the class
$N<0$ is closed under \eqref{eq:E3}.
\end{enumerate}
\end{theorem}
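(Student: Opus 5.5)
The plan is to prove each law by a change of variables in the defining integral \eqref{eq:Kdef}, tracking three things: how the kernel $\phi$ transforms, how the contour transforms, and whether the image contour lies in the same or another class of Definition~\ref{def:four}. Convergence never needs to be re-established. The image of an admissible ray is again a ray inside a horizontal decay sector, so Theorem~\ref{thm:A} applies on both sides, and Lemma~\ref{lem:isotopy}(ii) lets us replace the image contour by any representative of its class. No closing arcs are used, so \Harc is not needed and the statement is unconditional.

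\emph{(E1) Shift.} Substitute $s=c+u$. Each factor of $\Gs$ for $(c+\va,c+\vb)$ evaluated at $c+u$ equals the corresponding factor for $(\va,\vb)$ at $u$: for example $G(c+b_j-(c+u))=G(b_j-u)$ and $G(1+\tau-c-a_j+c+u)=G(1+\tau-a_j+u)$. Next, $e^{\pi\alpha(c+u)^2/\tau}=e^{\pi\alpha c^2/\tau}e^{2\pi\alpha cu/\tau}e^{\pi\alpha u^2/\tau}$, and $z^{-c-u}=z^{-c}z^{-u}$. Combining, $e^{2\pi\alpha cu/\tau}z^{-u}=(e^{-2\pi\alpha c/\tau}z)^{-u}$. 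The translation $u=s-c$ preserves terminal angles. It maps $\Lp\cup\Lm$ for the shifted data onto the pole sets for $(\va,\vb)$, and maps $\sigma_0$ to $\sigma_0-c$, which lies in the new separating strip. So (A3) is preserved, the class is preserved, and \eqref{eq:E1} follows for each $\bullet$.

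\emph{(E2) Inversion.} Substitute $s=-u$ and use the first identity of \cite[Eq.~(15)]{KK2024}, as in the proof of Lemma~\ref{lem:arcs}. It gives $\Gs^{m,n}_{p,q}(\va;\vb\,|\,-u)=\Gs^{n,m}_{q,p}(1+\tau-\vb;1+\tau-\va\,|\,u)$. Also $e^{\pi\alpha s^2/\tau}$ is even in $s$, $z^{-s}=(z^{-1})^{-u}$, and $ds=-du$. The map $s\mapsto-s$ sends angle $\theta$ to $\theta+\pi$. Hence it carries $L_{-\eps,\eps}$ to a contour with both ends west, but traversed from the NW end to the SW end. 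Reversing orientation absorbs the minus sign from $ds$ and yields $L_{-\pi+\eps,\pi-\eps}$. The pole combs also swap: the new $\Lm$ is $-\Lp$. So (A3) holds with the correct sides, and $\Kr\mapsto\Kl$. The SE$\to$NW contour maps to NW$\to$SE, which after reversal is SE$\to$NW again; so $\Ku\mapsto\Ku$, and likewise $\Kd\mapsto\Kd$. I will sketch the sign bookkeeping briefly in the write-up, since it is the only place an error could arise.

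\emph{(E3) Modular.} Substitute $s=\tau u$ and apply \eqref{eq:modular} to every factor of $\Gs$. Each $G(c\pm s;\tau)$ becomes $G(c/\tau\pm u;1/\tau)$ times an elementary factor of the form $(2\pi)^{(\cdot)}\tau^{(\cdot)}$. The exponents are quadratic in $u$. I expect this to be the main computational step. The quadratic terms $-\tfrac{(c\pm\tau u)^2}{2\tau}\ln\tau$, summed with signs $+$ over the $m+n$ numerator factors and $-$ over the $p+q-m-n$ denominator factors, contribute $-\tfrac{N}{2}\tau u^2\ln\tau$. This is absorbed by $e^{\pi\alpha'\tau u^2}$ after the shift $\alpha\mapsto\alpha-\tfrac{N}{2\pi}\ln\tau$, rewriting $e^{\pi\alpha s^2/\tau}=e^{\pi\alpha\tau u^2}$ in the new variable where the parameter is $\tau^{-1}$. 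The linear terms combine with $z^{-\tau u}$ into $(Bz)^{-\tau u}$, and the constants give $A$. The Jacobian supplies the factor $\tau$. I would rely on \cite[Eq.~(17)]{KK2024} for the exact $A,B$ rather than rederive them, and check only that the combinatorics depends on $N$ alone.

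Since $\tau>0$, the dilation preserves angles, the horizontal pole combs and the separating strip, so every class maps to itself. Finally, $N$ depends only on $m,n,p,q$, so it is invariant under $\tau\mapsto\tau^{-1}$.
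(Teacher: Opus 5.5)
Your proposal is correct and follows the paper's proof: the substitutions $s\mapsto s+c$, $s\mapsto -s$ and $s=\tau v$, with terminal angles and the east/west pairing tracked, and \cite[Eqs.~(15)--(17)]{KK2024} supplying the kernel identities. The one difference is that you verify the shift and the $-\tfrac{N\tau}{2}u^{2}\ln\tau$ term factor by factor instead of quoting \cite[Eq.~(16)]{KK2024} whole. That only makes explicit why the shift in $\alpha$ depends on $N$ alone.
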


\begin{proof}
(E1) The substitution $s\mapsto s+c$ replaces $\gamma$ by $\gamma+c$, a
translation, which changes neither the terminal ray angles nor the homotopy
class; the identity then follows from \cite[Eq.~(15)]{KK2024} exactly as in
\cite[Thm.~2]{KK2024}.

(E2) The substitution $s\mapsto-s$ sends a terminal ray of angle $\theta$ to one
of angle $\theta+\pi$ and reverses the orientation of the contour, so it acts on
the four unordered pairs of terminal angles by
$\{-\eps,\eps\}\leftrightarrow\{-\pi+\eps,\pi-\eps\}$ while fixing
$\{-\eps,\pi-\eps\}$ and $\{-\pi+\eps,\eps\}$. Combined with the first identity
of \cite[Eq.~(15)]{KK2024} and $z\mapsto z^{-1}$, and noting that
$e^{\pi\alpha s^{2}/\tau}$ is even, this gives the stated pairing.

(E3) The substitution $s=v\tau$ with $\tau>0$ is a positive real dilation, hence
preserves all ray angles and the homotopy class. Inserting
\cite[Eq.~(16)]{KK2024}, $\Gs(s)=A\,B^{-s}\tau^{-Ns^{2}/(2\tau)}
\Gs^{(\tau^{-1})}(s\tau^{-1})$, into \eqref{eq:Kdef} and using $ds=\tau\,dv$,
\[
K=\frac{A\tau}{2\pi i}\int \Gs^{(\tau^{-1})}(v)
\exp\!\Big(\pi\alpha\tau v^{2}-\tfrac{N\tau}{2}v^{2}\ln\tau\Big)
\big((Bz)^{\tau}\big)^{-v}dv,
\]
and matching $\pi\tilde\alpha v^{2}/\tilde\tau$ with $\tilde\tau=\tau^{-1}$
gives $\tilde\alpha=\alpha-\tfrac{N}{2\pi}\ln\tau$.
\end{proof}

\begin{remark}
The prefactor in \eqref{eq:E3} is the \emph{product} of the constant $A$ of
\cite[Eq.~(17)]{KK2024} with $\tau$, not $A$ raised to the power $\tau$. It is
typeset unambiguously in \cite[Eq.~(30)]{KK2024}; we flag it only because the
plain-text layer of some \textsc{pdf} extractions renders ``$A\tau$'' as a
superscript. Verified on all four contour classes for $\alpha\in\{0,0.25\}$ to
$5.4\times10^{-21}$ (T5c).
\end{remark}

\begin{remark}[Why $\alpha$ is there]\label{rem:whyalpha}
Identity \eqref{eq:E3} is also the reason the Gaussian factor
$e^{\pi\alpha s^{2}/\tau}$ belongs in \eqref{eq:phi} at all: unless $N=0$ it
shifts $\alpha$ by $-\tfrac{N}{2\pi}\ln\tau$, so a definition without $\alpha$
would not be closed under the modular law. This is observed by Karp and
Kuznetsov in the published version \cite[p.~238]{KK2024} and, for the
Fox--Barnes $J$-function, by Vaz \cite[Rem.~2]{Vaz2025}. In our setting it is
the first sign of the invariance recorded in
Theorem~\ref{thm:Ninvariant}: $N$ is unchanged by \eqref{eq:E3}, and it is
exactly the failure of $N$ to vanish that forces the extra parameter.
\end{remark}

\section{Failure of the vertical Mellin--Barnes representation}

\begin{theorem}[No admissible vertical contour]\label{thm:F1}
Let $N<0$ and let $\alpha\in\C$ be arbitrary. Then along either vertical
direction $\arg s=\pm\pi/2$,
\[
\big|\phi(s)z^{-s}\big|=\exp\!\Big(\tfrac{|N|}{2\tau}\,r^{2}\ln r\,(1+o(1))\Big)
\longrightarrow\infty ,
\]
so no contour $L_{\theta^-,\theta^+}$ with $\theta^{+}$ or $-\theta^{-}$ in
$(\pi/4,3\pi/4)$ satisfies \textup{(A4)} of Definition~\ref{def:admissible}. In
particular no straight line $c+i\R$ is admissible, and there is no strip in which
$\phi$ is recovered from a function by the classical Mellin inversion formula
along a vertical line.
\end{theorem}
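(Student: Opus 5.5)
The plan is to read the growth off the kernel asymptotic \eqref{eq:Gasym} on the imaginary axis, where it applies, and then turn that growth into the failure of (A4). The vertical directions $\theta=\pm\pi/2$ lie inside $S^{\pm}_{\eps}$ for any $\eps<\pi/4$. There \eqref{eq:Gasym} of \cite[Prop.~4(i)]{KK2024} is available uniformly, and it gives $\ln|\Gs(s)|=\frac{N}{2\tau}\Rea(s^{2}\ln s)+O(r^{2})$. With $s=re^{\pm i\pi/2}$ the identity $\Rea(s^{2}\ln s)=r^{2}[\cos2\theta\ln r-\theta\sin2\theta]$ reduces to $-r^{2}\ln r$, because $\sin(\pm\pi)=0$. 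Hence $\ln|\Gs(s)|=\frac{|N|}{2\tau}r^{2}\ln r+O(r^{2})$ when $N<0$. The Gaussian factor contributes $\ln|e^{\pi\alpha s^{2}/\tau}|=-\frac{\pi}{\tau}r^{2}\Rea\alpha=O(r^{2})$ for every $\alpha\in\C$. The factor $z^{-s}$ contributes $\ln|z^{-s}|=O(r)$, locally uniformly in $\log z$. Both are absorbed into the $(1+o(1))$, which proves the displayed asymptotic. It is exactly the ``only if'' half of Proposition~\ref{prop:sectors} at $\cos2\theta=-1$.

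Next I would show that every ray in the growth sectors violates (A4). For a terminal ray $\{s_{1}+te^{i\theta^{+}}:t\ge0\}$ with $\theta^{+}\in(\pi/4,3\pi/4)$, the same computation holds uniformly for $\theta$ in a compact subinterval, since that subinterval again sits inside some $S^{+}_{\eps}$. It gives $|\phi(s)z^{-s}|\ge\exp(\frac{|N|}{2\tau}|\cos2\theta^{+}|r^{2}\ln r(1-o(1)))$, and this tends to infinity. The ray's vertex $s_{1}$ does not matter: $|s|=t+O(1)$, $\arg s\to\theta^{+}$, and $\cos2\theta^{+}<0$ is bounded away from $0$. So $\int|\phi z^{-s}||ds|=\infty$ on that ray, and (A4) fails. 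The case of $-\theta^{-}$ is identical in $S^{-}_{\eps}$.

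For the straight line $c+i\R$ I would argue separately, since a line is not literally of the form (A2). Both of its ends satisfy $s=c+it$ with $\arg s\to\pm\pi/2$, so the integrand blows up at both ends. A line is therefore not even absolutely integrable, and a fortiori not admissible.

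The last clause, that there is no strip where Mellin inversion along a vertical line recovers $\phi$, is the step I expect to need the most care in phrasing rather than in computation. Classical Mellin inversion in a strip $a<\Rea s<b$ presupposes that $\phi(c+it)$ is the Mellin transform of some function there. For $\phi$ to be recovered from the transform by an $L^{1}$ (or even $L^{2}$) vertical integral, $\phi$ would need at least to be bounded on vertical lines, by Riemann--Lebesgue, or to be locally square-integrable with tempered growth. The superexponential growth just proved contradicts any such bound on every vertical line. I would state the conclusion in exactly this form: no vertical line integral converges, absolutely or in $L^{2}$. I would also point to Remark~\ref{rem:notclaimed}, announced in (C7), to avoid claiming that $K^{\bullet}$ has no Mellin transform at all.
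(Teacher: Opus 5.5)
Your proof is correct and follows the paper's argument, which is simply Proposition~\ref{prop:sectors} specialised to $\cos2\theta=-1$ and $N<0$. You write out what that one-line citation leaves implicit: the $\Rea\alpha$ and $\log z$ terms, independence of the vertex for rays in the growth sectors, and the Riemann--Lebesgue reading of the Mellin clause. One small slip, which does no harm: $c+i\R$ does fit \textup{(A2)}, as $L_{-\pi/2,\pi/2}$ with rays based at $c\pm iT$, so your separate treatment of the line is unnecessary, though not wrong.
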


\begin{proof}
Proposition~\ref{prop:sectors} with $\cos2\theta=-1$ and $N<0$.
\end{proof}

\begin{corollary}\label{cor:noThm67}
The proofs of \cite[Thms.~6 and 7]{KK2024} do not transfer to $N<0$: the first
computes $\int_{0}^{\infty}K(x)x^{s-1}dx$ by deforming $\gamma$ to $L_{i\infty}$,
and the second applies the Mellin convolution theorem to the resulting
transform. Both steps are unavailable by Theorem~\ref{thm:F1}.
\end{corollary}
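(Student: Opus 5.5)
The plan is to make precise the two steps that the proofs of \cite[Thms.~6 and 7]{KK2024} consist of, and to show that each needs a vertical or near-vertical admissible contour. Theorem~\ref{thm:F1} and Proposition~\ref{prop:sectors} exclude such contours when $N<0$.

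For \cite[Thm.~6]{KK2024} I would recall the argument. One writes $K(x)=\frac{1}{2\pi i}\int_{L_{i\infty}}\phi(s)x^{-s}\,ds$ on a vertical line $\Rea s=c$ in the strip $(\lambda_\sharp,\lambda_\flat)$. One then recognises this as the Mellin inversion formula, so that $\int_0^\infty K(x)x^{s-1}dx=\phi(s)$ there; this uses the integrability of $\phi(c+it)$ in $t$ and the Mellin--Plancherel or Fourier inversion theorem. When $N<0$ I would invoke Theorem~\ref{thm:F1} with $\theta=\pm\pi/2$. It gives $|\phi(c+it)|=\exp\bigl(\tfrac{|N|}{2\tau}t^{2}\ln|t|(1+o(1))\bigr)$, so the vertical integral diverges, and $L_{i\infty}$ is not admissible since (A4) fails. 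By Lemma~\ref{lem:isotopy}(i) the terminal sector of each end of an admissible contour is fixed within its equivalence class. A deformation from any of $\Kr,\Kl,\Ku,\Kd$ to $L_{i\infty}$ would have to rotate a terminal ray through the vertical growth sector. By Lemma~\ref{lem:isotopy}(ii) and Theorem~\ref{thm:F1} this destroys (A4), so the deformation step of that proof is unavailable.

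For \cite[Thm.~7]{KK2024} the argument applies the Mellin convolution theorem to the transform $\phi(s)$ obtained from Theorem~6, on a common vertical line of absolute convergence. It therefore inherits the missing input: with no vertical line on which $\phi$ is integrable, the convolution identity on a strip cannot even be formulated in the same way. I would check that the boundary region $\arg s=\pm\pi/4$ does not rescue things. By the definition of admissibility, rays there do not lie in the open decay sectors, and anyway the Mellin transform step specifically needs a vertical line $c+i\R$.

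The one delicate point is to phrase the claim as ``the proofs do not transfer'' and not ``the conclusions fail''. I would add a sentence deferring to Remark~\ref{rem:notclaimed}: the corollary asserts only that the contour-deformation and vertical Mellin-inversion steps are unavailable. It does not assert that $K^\bullet$ has no Mellin transform.
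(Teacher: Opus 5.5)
Your proposal is correct and is essentially the paper's argument. The paper proves the corollary simply by observing that both steps of \cite[Thms.~6--7]{KK2024} need a vertical line $c+i\R$ on which $\phi$ is integrable, while Theorem~\ref{thm:F1} shows that the integrand grows like $\exp\bigl(\tfrac{|N|}{2\tau}r^{2}\ln r\bigr)$ there. Your isotopy and $\arg s=\pm\pi/4$ remarks are harmless extras with two small inaccuracies: the target $L_{i\infty}$ itself already violates \textup{(A4)}, so there is no need to argue about rotating a ray \emph{through} the growth sector, and the boundary rays are settled by \textup{(A4)} rather than by any wording of the definition.
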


\begin{remark}[What is \emph{not} claimed]\label{rem:notclaimed}
We do not assert that $K^{\bullet}$ fails to possess a Mellin transform. Two
things stand in the way of such a claim, and both are instructive.
\begin{enumerate}[label=\textup{(\roman*)},leftmargin=2.2em,itemsep=2pt]
\item \emph{Degenerate parameters.} Condition \eqref{eq:generic} does not exclude
$n=p$, and then $\Kr\equiv0$ by Theorem~\ref{thm:D}; its Mellin transform exists
trivially. Any negative statement must first exclude the vanishing cases.
\item \emph{Termwise divergence is not divergence.} One is tempted to argue from
Theorem~\ref{thm:B} that, since the exponents in the expansion of $\Kr$ are
unbounded below, $\Kr$ must grow faster than any power as $z\to0^{+}$. That does
not follow: the expansion is a convergent sum only under
\Harc and in a regime where it is an expansion \emph{at infinity},
term-by-term growth in the opposite limit says nothing about the sum, and
coincident exponents may produce cancellation in the leading coefficient. A
genuine proof would require two-sided asymptotics for $K^{\bullet}$ together with
the non-vanishing of the relevant leading coefficients. We leave that open
(\S\ref{sec:open}, item~\ref{op:mellin}).
\end{enumerate}
Theorem~\ref{thm:F1} is the negative statement we can prove, and it is already
enough to explain why the four non-vertical classes are unavoidable.
\end{remark}

The difference relations that drive \cite[Thm.~7]{KK2024} do survive, however ---
not as integral equations in $x$, but as recurrences on the residue coefficients,
which need no Mellin transform at all.
Recall from the proof of \cite[Thm.~7]{KK2024} that
$\Gs(s+1)=F(s)\Gs(s)$ and $\Gs(s+\tau)=H(s)\Gs(s)$ with
\begin{align}
F(s)&=\frac{\prod_{j=1}^{n}\Gamma\!\big(1+\tfrac{1-a_j+s}{\tau}\big)
            \prod_{j=n+1}^{p}\Gamma\!\big(\tfrac{a_j-s-1}{\tau}\big)}
           {\prod_{j=1}^{m}\Gamma\!\big(\tfrac{b_j-s-1}{\tau}\big)
            \prod_{j=m+1}^{q}\Gamma\!\big(1+\tfrac{1-b_j+s}{\tau}\big)},
\label{eq:F}\\[4pt]
H(s)&=(2\pi)^{\frac{(\tau-1)(p-q)}{2}}\tau^{\mu-N(\tau+\frac12)}\tau^{-sN}
\frac{\prod_{j=1}^{n}\Gamma(1+\tau-a_j+s)\prod_{j=n+1}^{p}\Gamma(a_j-\tau-s)}
     {\prod_{j=1}^{m}\Gamma(b_j-\tau-s)\prod_{j=m+1}^{q}\Gamma(1+\tau-b_j+s)}.
\label{eq:H}
\end{align}

\begin{theorem}[Coefficient recurrence]\label{thm:F}
Assume \eqref{eq:generic} and set, for $n<j\le p$ and $l,k\ge0$,
\[
c^{(j)}_{l,k}:=\Res_{s=a_j+l\tau+k}\Gs(s),
\qquad s_{0}=a_j+l\tau+k .
\]
If $F$ and $H$ are regular at $s_{0}$ --- which holds for generic parameters ---
then
\begin{equation}\label{eq:recur}
c^{(j)}_{l,k+1}=F(s_{0})\,c^{(j)}_{l,k},
\qquad
c^{(j)}_{l+1,k}=H(s_{0})\,c^{(j)}_{l,k}.
\end{equation}
The analogous coefficients on $\Lm$ obey the inverse relations
$c^{-}_{l,k+1}=c^{-}_{l,k}/F(s_{0}-1)$ and
$c^{-}_{l+1,k}=c^{-}_{l,k}/H(s_{0}-\tau)$. The two recurrences are compatible:
\begin{equation}\label{eq:compat}
F(s+\tau)\,H(s)=H(s+1)\,F(s),
\end{equation}
so the value of $c^{(j)}_{l,k}$ does not depend on the lattice path used to
reach it --- \emph{path-independence holds on every connected subset of the
lattice $\{(l,k):l,k\ge0\}$ on which all the multipliers appearing in
\eqref{eq:recur} are finite and non-zero}, which is where the recurrence is
defined in the first place. We call such a subset a \emph{regular region}.
\end{theorem}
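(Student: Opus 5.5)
The plan is to use the functional equations $\Gs(s+1)=F(s)\Gs(s)$ and $\Gs(s+\tau)=H(s)\Gs(s)$ as identities of meromorphic functions and to take residues of both sides.

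\textbf{Recurrence on $\Lp$.} Fix $s_{0}=a_j+l\tau+k\in\Lp$. By \eqref{eq:generic} the poles of $\Gs$ at $s_0$, $s_0+1$ and $s_0+\tau$ are simple; this is the discussion following \eqref{eq:generic}. Since $F$ is regular at $s_0$, the identity $\Gs(s+1)=F(s)\Gs(s)$ near $s=s_0$ gives
\[
\Res_{s=s_0+1}\Gs=\Res_{s=s_0}\Gs(s+1)=\Res_{s=s_0}F(s)\Gs(s)=F(s_0)\,c^{(j)}_{l,k}.
\]
The left-hand side is $c^{(j)}_{l,k+1}$, because $s_0+1=a_j+l\tau+(k+1)$. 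Regularity of $F$ is what keeps the residue of the product from acquiring an $F'$ term. Genericity also matters here: the lattice point $s_0+1$ belongs to the $j$-th comb alone, so the residue at that point is attributed to the index $j$ unambiguously. The $H$-step is identical with $s_0+\tau$ in place of $s_0+1$.

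\textbf{Recurrence on $\Lm$.} Let $s_0=b_j-l\tau-k$. Then $s_0-1=b_j-l\tau-(k+1)$ carries the coefficient $c^{-}_{l,k+1}$. Evaluate $\Gs(s+1)=F(s)\Gs(s)$ near $s=s_0-1$:
\[
c^{-}_{l,k}=\Res_{s=s_0}\Gs=F(s_0-1)\,c^{-}_{l,k+1}.
\]
Whenever $F(s_0-1)$ is finite and non-zero this yields the inverse relation. The $H$-relation follows the same way, evaluating at $s_0-\tau$.

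\textbf{Compatibility.} Expand $\Gs(s+1+\tau)$ in the two possible orders:
\[
\Gs(s+1+\tau)=F(s+\tau)\Gs(s+\tau)=F(s+\tau)H(s)\Gs(s),
\]
\[
\Gs(s+1+\tau)=H(s+1)\Gs(s+1)=H(s+1)F(s)\Gs(s).
\]
Since $\Gs\not\equiv0$, this gives \eqref{eq:compat} as an identity of meromorphic functions. This route avoids checking the explicit gamma products \eqref{eq:F}--\eqref{eq:H} by hand.

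\textbf{Path independence.} Within a regular region, any two monotone lattice paths from $(l,k)$ to $(l',k')$ differ by a finite sequence of elementary square swaps. Each swap replaces the step pair "$k$ then $l$" by "$l$ then $k$" at some intermediate point $s$. The two products of multipliers differ exactly by $F(s+\tau)H(s)$ versus $H(s+1)F(s)$, and these are equal by \eqref{eq:compat} provided all four values are finite.

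This swap argument is the main obstacle, because regular regions need not be convex and a path may be forced to leave the rectangle spanned by its endpoints. I would handle this by allowing both forward and inverse steps, which is legitimate precisely because the multipliers are non-zero. Any closed loop in the region can then be reduced to elementary squares. If this reduction is not available, I would simply define path-independence on simply connected lattice regions.

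A shorter alternative makes the loop reduction unnecessary. The residues $c^{(j)}_{l,k}$ are intrinsically defined numbers, and each recurrence step holds for them as an identity. Any path product from $(l,k)$ to $(l',k')$ therefore reproduces the same value $c^{(j)}_{l',k'}$ automatically. Path independence then follows from the first two steps alone whenever $c^{(j)}_{l,k}\ne0$, with \eqref{eq:compat} serving only as an internal consistency check. I would present this version and record the square-swap argument as a remark.
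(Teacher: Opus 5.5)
\paragraph{Comparison with the paper's proof.} Your proof is correct. For the recurrence, the inverse relations on $\Lm$ and the compatibility identity it is exactly the paper's proof: take residues of both sides of $\Gs(s+1)=F(s)\Gs(s)$ and $\Gs(s+\tau)=H(s)\Gs(s)$, at $s_0$, or at $s_0-1$ and $s_0-\tau$ on $\Lm$. Then obtain \eqref{eq:compat} by expanding $\Gs(s+1+\tau)$ in the two orders.

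One attribution is slightly off. For holomorphic $F$, what rules out an $F'(s_0)$ term is that the pole of $\Gs$ is simple, which is where \eqref{eq:generic} enters. Regularity of $F$ instead rules out a term in which the principal part of $F$ pairs with the constant Laurent coefficient of $\Gs$. Both hypotheses are in force, so nothing breaks.

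The one genuine difference is in path-independence.
\begin{itemize}
\item The paper argues by induction along a lattice path, using \eqref{eq:compat} to interchange two consecutive steps. This shows that multiplier products along different paths agree, so the recurrence is self-consistent as a generator from the seed $c^{(j)}_{0,0}$, independently of the residue interpretation. It tacitly needs any two paths to be related by elementary square moves inside the region, which is the simple-connectivity issue you correctly flag.
\item Your preferred argument is that each step is an identity between intrinsically defined residues. This holds for forward steps and, since the multipliers are non-zero there, for backward steps too, so every path lands on the same number. It is shorter and valid on every connected regular region.
\end{itemize}
Moreover, since $c^{(j)}_{l,k}\ne0$ at a genuine simple pole, your argument also recovers equality of the path products, i.e.\ \eqref{eq:compat} at the lattice points traversed, as a consequence. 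For the statement as posed, your version is sufficient and is the cleaner proof.
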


\begin{proof}
$\Res_{s=s_0}\Gs(s+1)=\Res_{s=s_0+1}\Gs(s)=c^{(j)}_{l,k+1}$, while
$\Res_{s=s_0}\big(F(s)\Gs(s)\big)=F(s_0)\,c^{(j)}_{l,k}$ whenever $F$ is regular
at $s_0$. The second relation is identical with $1$ replaced by $\tau$. On
$\Lm$ the lattice is generated by $-1$ and $-\tau$, whence the inverses. For
\eqref{eq:compat}, both sides equal $\Gs(s+1+\tau)/\Gs(s)$, computed along the
two orders of the two shifts; the identity is therefore automatic wherever all
four factors are finite and non-zero, which is the regularity requirement in the
statement (\eqref{eq:generic} rules out the coincidences that would make a
$\Gamma$-factor of \eqref{eq:F}--\eqref{eq:H} singular at a lattice point, but
it does not by itself force non-vanishing, so regularity is assumed separately).
Path-independence on a regular region follows by induction on any lattice path
inside it, since \eqref{eq:compat} lets one interchange two consecutive steps.
\end{proof}

\begin{remark}[Why this is the right replacement]
The route of \cite[Thm.~7]{KK2024} is to convert
$\Gs(s+1)=F(s)\Gs(s)$ into an integral equation in $x$ using the Mellin
convolution theorem; that step uses a Mellin transform represented and inverted
on a vertical line, which Theorem~\ref{thm:F1} shows is unavailable here. The same difference relation
nevertheless holds pointwise on residues. The practical consequence is
substantial: only the $p-n$ seeds $c^{(j)}_{0,0}$ require an evaluation of
$G'$ and of $\Gs$; every other coefficient of the two-dimensional array is
generated by \eqref{eq:recur} using ordinary gamma functions only. Evaluating a
truncated double series to order $L$ therefore costs $O(L^{2})$ gamma
evaluations rather than $O(L^{2})$ double-gamma evaluations. The reduction to
$p-n$ seeds presumes the simple situation in which $\tau\notin\Q$, no two poles
of $\Lp$ coincide, no numerator zero cancels one of them, and $F,H$ are finite
and non-zero at every lattice point traversed; outside it the array splits into
several orbits, or some multipliers degenerate, and more seeds are needed.
\end{remark}

\section{\texorpdfstring{Kernel calculus and partial closure under the classical transforms}{Kernel calculus and partial closure under the classical transforms}}\label{sec:closure}

The usefulness of the Meijer $G$-function rests on a list of \emph{stability
properties}: the family is preserved by integer and fractional differentiation
and integration, by multiplication by powers, by inversion of the argument, by
the Laplace and Euler transforms, and by Mellin convolution
\cite{MathaiSaxena1973,Kiryakova2021}. Karp and Kuznetsov open \cite{KK2024} by
recalling this list, but do not ask which parts of it survive for the
$K$-function. This section settles that question. The answer is uniform, and it
produces a structural invariant which explains a puzzling feature of the whole
subject (Corollary~\ref{cor:Ntrap}).

\subsection{The mechanism}
Every operation on the list acts on the Mellin transform by multiplication by a
\emph{ratio of ordinary gamma functions}:
\begin{equation}\label{eq:multipliers}
\begin{aligned}
\text{Erd\'elyi--Kober } I^{\gamma,\delta}_{\beta=1}&:\
\frac{\Gamma(\gamma+1-s)}{\Gamma(\gamma+\delta+1-s)},
&\qquad
\text{Euler}&:\ \Gamma(\gamma)\frac{\Gamma(\beta-s)}{\Gamma(\beta+\gamma-s)},
\\[2pt]
\text{Laplace}&:\ \Gamma(s),\ \text{with }s\mapsto1-s,
&\qquad
\tfrac{d}{dz}&:\ -s=-\tfrac{\Gamma(s+1)}{\Gamma(s)},\ \text{with }s\mapsto s+1,
\end{aligned}
\end{equation}
while Mellin convolution multiplies by the other function's Mellin transform.
For the Meijer $G$-function the Mellin-side object is itself a ratio of gamma
functions, so the family is trivially preserved. For the $K$-function the
Mellin-side object $\phi(s)=\Gs(s)e^{\pi\alpha s^{2}/\tau}$ is a ratio of
\emph{double} gammas, and the question is whether multiplying it by a ratio of
\emph{ordinary} gammas stays inside the family. It does, because of
\eqref{eq:GammaviaG}: each gamma factor is a ratio of two $G$'s, i.e.\ exactly
one new $a$-parameter and one new $b$-parameter, plus a factor $\tau^{\pm s}$
which is a rescaling of $z$.

\begin{lemma}[Insertion rules]\label{lem:insert}
Let $\phi$ be as in \eqref{eq:phi} and put $\kappa=(2\pi)^{(1-\tau)/2}$. For
every $c\in\C$ and each multiplier $R$ in the table below,
\begin{equation}\label{eq:insert}
R(s)\,\phi^{m,n}_{p,q}\!\left(\begin{matrix}\va\\ \vb\end{matrix}\middle|s\right)
= C\,\lambda^{-s}\;
\phi^{m',n'}_{p',q'}\!\left(\begin{matrix}\va'\\ \vb'\end{matrix}\middle|s\right),
\qquad\text{equivalently}\qquad
\frac{1}{2\pi i}\int_{\gamma}\! R\,\phi\,z^{-s}ds = C\,K'(\lambda z),
\end{equation}
with $\alpha$ unchanged and the data given by
\smallskip

\centerline{\begin{tabular}{@{}lcccccc@{}}
\toprule
& & & \multicolumn{2}{c}{new $a$} & \multicolumn{2}{c}{new $b$}\\
\cmidrule(lr){4-5}\cmidrule(lr){6-7}
$R(s)$ & $C$ & $\lambda$ & value & block & value & block\\
\midrule
$\Gamma(c-s)$   & $\kappa\,\tau^{c-\frac12}$    & $\tau$
  & $c$        & den. & $c+\tau$   & num.\\
$\Gamma(c+s)$   & $\kappa\,\tau^{c-\frac12}$    & $\tau^{-1}$
  & $1-c$      & num. & $1+\tau-c$ & den.\\
$1/\Gamma(c-s)$ & $\kappa^{-1}\tau^{\frac12-c}$ & $\tau^{-1}$
  & $c+\tau$   & den. & $c$        & num.\\
$1/\Gamma(c+s)$ & $\kappa^{-1}\tau^{\frac12-c}$ & $\tau$
  & $1+\tau-c$ & num. & $1-c$      & den.\\
\bottomrule
\end{tabular}}
\smallskip

\noindent
Here a new $a$-parameter in the numerator block increments $n$ and $p$; in the
denominator block it increments $p$ only. Likewise a new $b$-parameter in the
numerator block increments $m$ and $q$; in the denominator block it increments
$q$ only.
\end{lemma}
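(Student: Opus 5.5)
The plan is a direct computation from the gamma-to-double-gamma representation \eqref{eq:GammaviaG}, applied once per row of the table, followed by a bookkeeping check of which block each new $G$-factor lands in. The integral form will then follow by multiplying by $z^{-s}$ and integrating over the same $\gamma$.

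\emph{Row 1, $R(s)=\Gamma(c-s)$.} Substituting $s\mapsto c-s$ in \eqref{eq:GammaviaG} gives
\[
\Gamma(c-s)=\kappa\,\tau^{c-\frac12}\,\tau^{-s}\,\frac{G(c+\tau-s;\tau)}{G(c-s;\tau)} .
\]
I will compare each factor with \eqref{eq:Gscript}.
\begin{itemize}[leftmargin=1.6em,itemsep=1pt]
\item The numerator factor $G(c+\tau-s;\tau)$ has the shape $G(b_j-s;\tau)$ with $j\le m$. So it is a new $b$-parameter $c+\tau$ in the numerator block, which increments $m$ and $q$.
\item The denominator factor $G(c-s;\tau)$ has the shape $G(a_j-s;\tau)$ with $j>n$. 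So it is a new $a$-parameter $c$ in the denominator block, which increments $p$ only.
\item The remaining factor $\tau^{-s}$ is $\lambda^{-s}$ with $\lambda=\tau$, and the constant is $C=\kappa\tau^{c-1/2}$.
\end{itemize}

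\emph{Row 2, $R(s)=\Gamma(c+s)$.} The same substitution gives $\kappa\,\tau^{c-\frac12}\,\tau^{s}\,G(c+\tau+s;\tau)/G(c+s;\tau)$.
\begin{itemize}[leftmargin=1.6em,itemsep=1pt]
\item Writing $c+\tau+s=1+\tau-a+s$ gives $a=1-c$. This factor sits in the numerator block as $G(1+\tau-a_j+s)$ with $j\le n$.
\item Writing $c+s=1+\tau-b+s$ gives $b=1+\tau-c$. This factor sits in the denominator block as $G(1+\tau-b_j+s)$ with $j>m$.
\item The factor $\tau^{s}$ equals $\lambda^{-s}$ with $\lambda=\tau^{-1}$.
\end{itemize}

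\emph{Rows 3 and 4.} These are the reciprocals of rows 1 and 2. Inverting swaps the two $G$-factors between numerator and denominator, inverts $C$, and inverts $\lambda$, which reproduces the last two rows of the table. Since only $\Gs$ is altered, the Gaussian factor $e^{\pi\alpha s^2/\tau}$, and hence $\alpha$, is untouched. Each row adds one numerator and one denominator factor, so the change in $N$ is $2\cdot1-2=0$. This last point is not needed for the lemma, but I will record it for later use.

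\emph{Integral form.} The kernel identity \eqref{eq:insert} holds pointwise as an identity of meromorphic functions. Multiplying by $z^{-s}$ and using $\lambda^{-s}z^{-s}=(\lambda z)^{-s}$ gives the integral form, integrating over the \emph{same} contour $\gamma$. This needs the branch convention that $\lambda>0$ is real, so that $\log(\lambda z)=\log\lambda+\log z$ on the Riemann surface.

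The main obstacle is reading the integral identity as $C\,K'(\lambda z)$ for a named contour class. For that, $\gamma$ must still be admissible for the new data, in the sense of Definition~\ref{def:admissible}. The new denominator factor in row 1 formally adds the comb $c+l\tau+k$ to $\Lp$, but the new numerator factor $G(c+\tau-s)$ cancels every point with $l\ge1$. So the genuine new poles are only $c+k$, which are the poles of $\Gamma(c-s)$. These lie on a horizontal line and run east, so (A3) holds provided $\gamma$ leaves them on its right, that is, provided $\Rea c>\sigma_0$ for the normal-form vertex $\sigma_0$ of Lemma~\ref{lem:isotopy}. Likewise, in row 2 the poles $-c-k$ must lie to the left of $\sigma_0$. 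Absolute convergence (A4) is automatic by Theorem~\ref{thm:A}, since $N$ is unchanged. In rows 3 and 4 no new poles appear at all.

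I would therefore state the integral equivalence under this separation proviso, or read it simply as the identity of integrals over a common $\gamma$. I would also flag that the pole set of the new kernel is only \emph{contained} in $\Lambda'^{+}\cup\Lambda'^{-}$, in line with Remark~\ref{rem:degenerate}.
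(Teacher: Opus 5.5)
Your computation is correct and is essentially the paper's own proof. Both substitute $c\mp s$ into \eqref{eq:GammaviaG}, match the two resulting $G$-factors against the blocks of \eqref{eq:Gscript}, read $C$ and $\lambda$ off the leftover factor $\kappa\tau^{c\mp s-\frac12}$, and take reciprocals for rows three and four.

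Your admissibility proviso goes beyond the paper, which defers it to the hypotheses of Theorem~\ref{thm:closure}. One correction: Definition~\ref{def:admissible} is phrased with the formal combs $\Lp,\Lm$, so the combs appended by rows three and four also count, although they are fully cancelled and leave the value of the integral unaffected. They must be separated by $\sigma_0$ for $K'$ to be one of the four named functions (cf.\ Remark~\ref{rem:tworealisations}), so your statement that rows three and four impose no condition is not right under the paper's conventions.
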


\begin{proof}
Apply \eqref{eq:GammaviaG} with $s$ replaced by $c-s$:
\[
\Gamma(c-s)=\kappa\,\tau^{c-s-\frac12}\,
\frac{G(c+\tau-s;\tau)}{G(c-s;\tau)} .
\]
Comparing with \eqref{eq:Gscript}, the numerator $G(c+\tau-s;\tau)$ is of the
type $G(b_j-s;\tau)$ occurring for $j\le m$, so it is a new $b$-parameter
$c+\tau$ in the numerator block; the denominator $G(c-s;\tau)$ is of the type
$G(a_j-s;\tau)$ occurring for $j>n$, so it is a new $a$-parameter $c$ in the
denominator block. The remaining factor $\kappa\tau^{c-\frac12}\tau^{-s}$ gives
$C$ and $\lambda=\tau$, since multiplying the integrand by $\tau^{-s}$ replaces
$z$ by $\tau z$. The other three rows are identical computations using
$\Gamma(c+s)=\kappa\tau^{c+s-\frac12}G(c+\tau+s;\tau)/G(c+s;\tau)$ together with
$G(c+\tau+s;\tau)=G(1+\tau-(1-c)+s;\tau)$ and
$G(c+s;\tau)=G(1+\tau-(1+\tau-c)+s;\tau)$, and by taking reciprocals.
Since no multiplier contains a factor $e^{\text{const}\cdot s^{2}}$, $\alpha$ is
untouched.
\end{proof}

\subsection{The transforms}

We separate two statements that are easy to conflate: an algebraic one about
the \emph{kernel}, which is unconditional, and an analytic one about the
\emph{integral transform}, which is not.

\begin{theorem}[Kernel calculus]\label{thm:kernelcalc}
Let $R$ be any finite product of factors $\Gamma(c\mp s)^{\pm1}$. Then
$R(s)\phi(s)=C\lambda^{-s}\phi_{\mathrm{new}}(s)$ with $C,\lambda$ and the
enlarged parameter lists obtained by composing the rows of
Lemma~\ref{lem:insert}; the multipliers \eqref{eq:multipliers} of the
Erd\'elyi--Kober, Euler, Laplace and differentiation operators are all of this
form. Moreover the product $\phi_{1}\phi_{2}$ of two such kernels with a common
$\tau$ is again of the form \eqref{eq:phi}, with the parameter blocks
concatenated and $\alpha=\alpha_{1}+\alpha_{2}$.
\end{theorem}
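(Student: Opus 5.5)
The plan is to argue by induction on the number of gamma factors in $R$, with Lemma~\ref{lem:insert} as the inductive step. The base case $R\equiv1$ is trivial: take $C=\lambda=1$ and leave the parameters unchanged. For the step, write $R=R_{0}\,\Gamma(c\mp s)^{\pm1}$ and assume $R_{0}\phi=C_{0}\lambda_{0}^{-s}\phi_{0}(s)$. Then
\[
R\phi=C_{0}\lambda_{0}^{-s}\,\Gamma(c\mp s)^{\pm1}\phi_{0}(s)=C_{0}C_{1}(\lambda_{0}\lambda_{1})^{-s}\phi_{\mathrm{new}}(s)
\]
by the appropriate row of the table. The point to check is that Lemma~\ref{lem:insert} applies to $\phi_{0}$ even though $\phi_{0}$ is already an enlarged kernel. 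This is immediate: the lemma is stated for an arbitrary $\phi$ of the form \eqref{eq:phi} and for every $c\in\C$, and the scalar factor $\lambda_{0}^{-s}$ commutes with everything. The constants multiply, the scalings $\lambda\in\{\tau,\tau^{-1}\}$ multiply, and the parameter lists are enlarged row by row with the block bookkeeping stated in the lemma. Since no row introduces a factor $e^{\mathrm{const}\cdot s^{2}}$, $\alpha$ is unchanged. The result does not depend on the order of insertion, up to a permutation of parameters inside each block, because multiplication is commutative and \eqref{eq:Gscript} is symmetric within blocks.

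Next I check that the four multipliers in \eqref{eq:multipliers} are of the required form.
\begin{itemize}
\item \emph{Erd\'elyi--Kober:} $\Gamma(\gamma+1-s)\,\Gamma(\gamma+\delta+1-s)^{-1}$, which is one $\Gamma(c-s)$ row and one $1/\Gamma(c-s)$ row.
\item \emph{Euler:} the constant $\Gamma(\gamma)$ is absorbed into $C$, leaving $\Gamma(\beta-s)/\Gamma(\beta+\gamma-s)$, the same pattern.
\item \emph{Laplace:} $\Gamma(s)$ is the $\Gamma(c+s)$ row with $c=0$.
\item \emph{Differentiation:} $-s=-\Gamma(s+1)/\Gamma(s)$ is one $\Gamma(c+s)$ row and one $1/\Gamma(c+s)$ row, with the sign absorbed into $C$.
\end{itemize}
The accompanying substitutions $s\mapsto1-s$ and $s\mapsto s+1$ are not part of the multiplier claim. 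At kernel level they are handled by the reflection identity \cite[Eq.~(15)]{KK2024}, which exchanges blocks and sends $\va,\vb$ to $1+\tau-\vb,1+\tau-\va$, and by the shift law of Theorem~\ref{thm:E}(E1). Both act on kernels of the form \eqref{eq:phi}. Note that the shift changes the Gaussian factor by $e^{\pi\alpha(2cs+c^{2})/\tau}$, which contributes only a constant and a scaling $\lambda$, so the form is preserved.

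For the product statement I multiply the two kernels directly. Since $\tau$ is common, $\Gs_{1}\Gs_{2}$ is a ratio of products of $G(b-s;\tau)$, $G(1+\tau-a+s;\tau)$ in the numerators and $G(1+\tau-b+s;\tau)$, $G(a-s;\tau)$ in the denominators. Concatenating the blocks, with the first $m_{1}$ and then the first $m_{2}$ $b$'s placed in the numerator block (and similarly for $a$), gives a kernel $\Gs^{m_{1}+m_{2},\,n_{1}+n_{2}}_{p_{1}+p_{2},\,q_{1}+q_{2}}$. The Gaussians combine as $e^{\pi\alpha_{1}s^{2}/\tau}e^{\pi\alpha_{2}s^{2}/\tau}=e^{\pi(\alpha_{1}+\alpha_{2})s^{2}/\tau}$.

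I expect no real obstacle, since everything is finite algebra. The only place that needs care is the order convention in \eqref{eq:Gscript}: the numerator-block indices must come first in each list. After concatenation this requires a reordering of the parameter vectors, which is harmless because $\Gs$ depends only on the multisets within each block.
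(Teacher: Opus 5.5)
Your proposal is correct and follows the paper's route. The paper's proof is the one-line observation that the claim is immediate from Lemma~\ref{lem:insert}, applied factor by factor as your induction makes explicit, and, for the product, that $\Gs_1\Gs_2$ is again a $\Gs$ while the Gaussians combine to $e^{\pi(\alpha_1+\alpha_2)s^2/\tau}$; your additional checks of the four multipliers and of the reflection/shift substitutions are consistent with the paper and not needed for the statement.
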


\begin{proof}
Immediate from Lemma~\ref{lem:insert} and, for the product, from the fact that
$\Gs_1\Gs_2$ is a $\Gs$ and
$e^{\pi\alpha_1s^2/\tau}e^{\pi\alpha_2s^2/\tau}=e^{\pi(\alpha_1+\alpha_2)s^2/\tau}$.
\end{proof}

Theorem~\ref{thm:kernelcalc} is a statement about integrands. Turning it into an
identity between functions of $x$ requires, for each operator and each contour
class, that the elementary integral defining the multiplier converge
\emph{along the contour} and that Fubini's theorem apply. That is a genuine
restriction, and it is not uniform over the four classes.

\begin{theorem}[Analytic transform identities]\label{thm:closure}
Let $N<0$ and let $\bullet$ be one of the four classes of
Definition~\ref{def:four}, realised as in \eqref{eq:fourpieces} with abscissa
$\sigma_{0}$. Assume throughout that the enlarged parameter set still satisfies
$\lambda_{\sharp}<\lambda_{\flat}$. Then:
\begin{enumerate}[label=\textup{(\roman*)},leftmargin=2.4em,itemsep=6pt]

\item \textup{(Euler / beta transform; $\bullet=\leftarrow$ only.)} The inner
integral $\int_{0}^{1}t^{\beta-s-1}(1-t)^{\gamma-1}dt$ converges iff
\emph{both} $\Rea(\beta-s)>0$ (endpoint $t=0$) and $\Rea\gamma>0$
(endpoint $t=1$). The second is a hypothesis on the operator and we impose it.
For the first: on any class with an eastern end $\Rea s\to+\infty$, so it
fails; on $\gamma_{\leftarrow}$ one has $\Rea s\le\sigma_{0}$
throughout, and it holds as soon as $\Rea\beta>\sigma_{0}$. Assume therefore
\begin{equation}\label{eq:eulerhyp}
\Rea\beta>\sigma_{0},\qquad \Rea\gamma>0 .
\end{equation}
Then
\begin{equation}\label{eq:euler}
\begin{aligned}
\int_{0}^{1}&t^{\beta-1}(1-t)^{\gamma-1}\,
K^{\bullet}\!\left(\begin{matrix}\va\\ \vb\end{matrix}\middle|zt;\tau,\alpha\right)dt\\
&\qquad=\Gamma(\gamma)\,\tau^{-\gamma}\,
K^{\bullet}{}^{m+2,\,n}_{\,p+2,q+2}\!
\left(\begin{matrix}\va,\ \beta,\ \beta+\gamma+\tau\\
                    \beta+\tau,\ \beta+\gamma,\ \vb\end{matrix}
\middle|z;\tau,\alpha\right),
\end{aligned}
\end{equation}
with $\bullet=\leftarrow$ on both sides, the new $b$'s numerator-type and the
new $a$'s denominator-type. By the inversion law \textup{(E2)} the mirror
identity --- an integral of $\Kr(z/t)$ over $t\in(0,1)$ --- holds for $\Kr$ with
the reflected parameters.

\item \textup{(Erd\'elyi--Kober, hence Riemann--Liouville;
$\bullet=\leftarrow$.)} This is \textup{(i)} with $\beta=\gamma+1$ and
$\gamma$ there replaced by $\delta$, so \eqref{eq:eulerhyp} becomes
\begin{equation}\label{eq:ekhyp}
\Rea(\gamma+1)>\sigma_{0},\qquad \Rea\delta>0 .
\end{equation}
With
$I^{\gamma,\delta}_{1}f(z)=\frac{1}{\Gamma(\delta)}\int_{0}^{1}
(1-\sigma)^{\delta-1}\sigma^{\gamma}f(z\sigma)\,d\sigma$,
\begin{equation}\label{eq:ek}
I^{\gamma,\delta}_{1}K^{\bullet}(z)=\tau^{-\delta}\,
K^{\bullet}{}^{m+2,\,n}_{\,p+2,q+2}\!
\left(\begin{matrix}\va,\ \gamma+1,\ \gamma+\delta+1+\tau\\
                    \gamma+1+\tau,\ \gamma+\delta+1,\ \vb\end{matrix}
\middle|z\right).
\end{equation}
The Riemann--Liouville integral is $I^{\delta}_{0+}=z^{\delta}I^{0,\delta}_{1}$,
so it is closed by \eqref{eq:ek} together with \textup{(E1)}. Fractional
\emph{derivatives} are treated separately, and without any continuation in
$\delta$, in Theorem~\ref{thm:RL}.

\item \textup{(Differentiation of every order; all four classes.)} The
multiplier of $d^{k}/dz^{k}$ is $(-1)^{k}\Gamma(s+k)/\Gamma(s)$ together with the
factor $z^{-k}$, since
$d^{k}(z^{-s})/dz^{k}=(-1)^{k}\big(\Gamma(s+k)/\Gamma(s)\big)z^{-s-k}$. It is
entire and polynomially bounded, so differentiation under the integral sign is
justified for every $\bullet$ by the absolute convergence of
Theorem~\ref{thm:A}, and rows two and four of Lemma~\ref{lem:insert} with
$c=k$ and $c=0$ give, for every $k\in\Z_{\ge1}$,
\begin{equation}\label{eq:derivk}
\frac{d^{k}}{dz^{k}}K^{\bullet}\!\left(\begin{matrix}\va\\ \vb\end{matrix}\middle|z\right)
=\frac{(-\tau)^{k}}{z^{k}}\,
K^{\bullet}{}^{m,\,n+2}_{\,p+2,q+2}\!
\left(\begin{matrix}1+\tau,\ 1-k,\ \va\\
                     \vb,\ 1+\tau-k,\ 1\end{matrix}\middle|z\right),
\end{equation}
the new $a$'s numerator-type and the new $b$'s denominator-type. For $k=1$
this is
\begin{equation}\label{eq:deriv}
\frac{d}{dz}K^{\bullet}\!\left(\begin{matrix}\va\\ \vb\end{matrix}\middle|z\right)
=-\frac{\tau}{z}\,
K^{\bullet}{}^{m,\,n+2}_{\,p+2,q+2}\!
\left(\begin{matrix}0,\ 1+\tau,\ \va\\ \vb,\ \tau,\ 1\end{matrix}\middle|z\right).
\end{equation}
Formula \eqref{eq:derivk} is \emph{not} obtained by iterating \eqref{eq:deriv}:
iteration would differentiate the product $z^{-1}K$ and produce a sum of $k$
terms. Inserting the full multiplier once gives the single term.

\item \textup{(Laplace transform and Mellin convolution: kernel level only.)}
The Laplace multiplier is $\Gamma(s)$ together with the reflection
$s\mapsto1-s$, and the Mellin-convolution multiplier is the second kernel
$\phi_{2}$; by Theorem~\ref{thm:kernelcalc} both keep us inside the family,
giving in the Laplace case order $(m,n+1,p+1,q+1)$ relative to the reflected
data and in the convolution case
$(m_{1}+m_{2},n_{1}+n_{2},p_{1}+p_{2},q_{1}+q_{2})$ with
$\alpha=\alpha_{1}+\alpha_{2}$. We do \emph{not} establish the corresponding
identities between functions of $x$. For the Laplace transform this would need
$\int_{0}^{\infty}e^{-pt}t^{-s}dt$ to converge along $\gamma_{\bullet}$, i.e.\
$0<\Rea s<1$ there, which no class satisfies; for the convolution
\begin{equation}\label{eq:mconv}
\int_{0}^{\infty}K_{1}\!\left(\tfrac{x}{t}\right)K_{2}(t)\,\frac{dt}{t}
\end{equation}
it would need a common fundamental strip for the two $K$-functions; no such
strip has been established, because the two-sided asymptotics that would produce
one remain open (Remark~\ref{rem:notclaimed}). We therefore record these two as \emph{formal} identities, valid
at the level of Theorem~\ref{thm:kernelcalc}, and leave their analytic
counterparts open (\S\ref{sec:open}, item~\ref{op:transforms}).
\end{enumerate}
\end{theorem}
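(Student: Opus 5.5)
The plan is to prove each item by the same two-step scheme: first an interchange of integrals (or of differentiation and integration), justified by the absolute convergence of Theorem~\ref{thm:A} along the concrete realisation \eqref{eq:fourpieces}; then an application of Lemma~\ref{lem:insert} to identify the resulting integrand. Items (ii) and (iv) then follow almost immediately from (i) and from Theorem~\ref{thm:kernelcalc}.

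\emph{Item (i).} Write $K^{\leftarrow}(zt)=\frac{1}{2\pi i}\int_{\gamma_\leftarrow}\phi(s)z^{-s}t^{-s}\,ds$, with $\gamma_\leftarrow=-R^{W}_{-}+R^{W}_{+}$ emanating from $\sigma_0$, so that $\Rea s\le\sigma_0$ on the whole contour. Since $t>0$ is real, $|t^{-s}|=t^{-\Rea s}$, and the double integrand is bounded by
\[
|\phi(s)z^{-s}|\,t^{\Rea\beta-\sigma_0-1}(1-t)^{\Rea\gamma-1}.
\]
Its $t$-integral is bounded by $B(\Rea\beta-\sigma_0,\Rea\gamma)$ uniformly in $s$, using \eqref{eq:eulerhyp}. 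Theorem~\ref{thm:A} makes the $s$-integral finite, so Fubini applies. The inner integral is then $\Gamma(\gamma)\Gamma(\beta-s)/\Gamma(\beta+\gamma-s)$.

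Next apply Lemma~\ref{lem:insert} twice.
\begin{itemize}
\item Row one with $c=\beta$ gives $C=\kappa\tau^{\beta-\frac12}$ and $\lambda=\tau$, a new denominator-type $a=\beta$ and a new numerator-type $b=\beta+\tau$.
\item Row three with $c=\beta+\gamma$ gives $C=\kappa^{-1}\tau^{\frac12-\beta-\gamma}$ and $\lambda=\tau^{-1}$, a new denominator-type $a=\beta+\gamma+\tau$ and a new numerator-type $b=\beta+\gamma$.
\end{itemize}
The constants multiply to $\tau^{-\gamma}$ and the $\lambda$'s cancel, which gives \eqref{eq:euler}.

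It remains to check that the same contour is admissible for the enlarged data. The only new poles come from the new denominator $a$'s. They lie in $\Lp$ on combs starting at $\beta$ and at $\beta+\gamma+\tau$, both with real part greater than $\sigma_0$ by \eqref{eq:eulerhyp} and $\Rea\gamma>0$. Hence $\sigma_0$ still separates the two combs, $\Lm$ is unchanged, and the contour is still of class $\leftarrow$. The mirror statement for $\Kr$ follows by conjugating with (E2). For the eastern classes the inner integral diverges at $t=0$, which is why they are excluded.

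\emph{Item (ii)} is the substitution $\beta=\gamma+1$, $\gamma\mapsto\delta$ in (i), followed by division by $\Gamma(\delta)$. The Riemann--Liouville integral is obtained from this together with the prefactor $z^\delta$, which (E1) absorbs.

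\emph{Item (iii).} The derivative $\partial_z^k(z^{-s})=(-1)^k\frac{\Gamma(s+k)}{\Gamma(s)}z^{-s-k}$ is a polynomial in $s$ of degree $k$ times $z^{-s-k}$. By the super-exponential bound \eqref{eq:bound}, which holds locally uniformly in $z$, differentiation under the integral sign is dominated on every class. The multiplier is then converted as follows.
\begin{itemize}
\item Row two with $c=k$ gives $\kappa\tau^{k-\frac12}$ and $\lambda=\tau^{-1}$, a new numerator-type $a=1-k$ and a new denominator-type $b=1+\tau-k$.
\item Row four with $c=0$ gives $\kappa^{-1}\tau^{\frac12}$ and $\lambda=\tau$, a new numerator-type $a=1+\tau$ and a new denominator-type $b=1$.
\end{itemize}
The product of constants is $\tau^k$, which gives \eqref{eq:derivk}.

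The point needing care here is contour admissibility. The new denominator-type $b$'s formally add points to $\Lm$. Because $\Gamma(s+k)/\Gamma(s)$ is entire, all of these are cancelled by the new numerator zeros (the degeneracy of Remark~\ref{rem:degenerate}). The integrand is therefore literally unchanged, and the same contour represents the same class; the hypothesis $\lambda_\sharp<\lambda_\flat$ for the enlarged set only guarantees that the normal form of Lemma~\ref{lem:isotopy} is available.

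\emph{Item (iv)} asserts only the kernel identity, which is Theorem~\ref{thm:kernelcalc}. The negative remarks are immediate: $\int_0^\infty e^{-pt}t^{-s}\,dt$ needs $0<\Rea s<1$ along the whole contour, which is impossible because every class has an unbounded horizontal end.

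I expect the main obstacle to be not Fubini itself but verifying that the enlarged kernel's pole combs are still separated by the \emph{same} abscissa $\sigma_0$. This is what makes the right-hand side a $K^\leftarrow$ of the same class rather than a different one, and it is exactly where \eqref{eq:eulerhyp} enters a second time.
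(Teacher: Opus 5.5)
The identities in (i)--(iii), and the kernel-level content of (iv), are handled correctly and by the paper's route: you insert the multiplier under the contour integral, interchange, and identify the enlarged kernel by rows one and three (resp.\ two and four) of Lemma~\ref{lem:insert}. Your constants $\Gamma(\gamma)\tau^{-\gamma}$ and $(-\tau)^{k}$, and the cancellation of the two $\tau^{\pm s}$ factors, agree with the paper.

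Two of your details are tighter than the paper's.
\begin{enumerate}
\item Your majorant $t^{\Rea\beta-\sigma_0-1}(1-t)^{\Rea\gamma-1}$, uniform in $s\in\gamma_{\leftarrow}$, bounds the integral of the modulus, which is what Tonelli actually requires. The paper instead cites the Stirling bound $B(\beta-s,\gamma)=O(|s|^{-\Rea\gamma})$, which bounds the modulus of the integral.
\item You check that the new $\Lp$-combs start at $\beta$ and $\beta+\gamma+\tau$, both to the right of $\sigma_0$, while $\Lm$ is unchanged. This shows that in (i)--(ii) the separation hypothesis holds automatically with the same abscissa; the paper simply assumes it.
\end{enumerate}

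The one step that fails is your ``immediate'' justification of the negative remark in (iv). For $\Rea p>0$ the integral $\int_0^\infty e^{-pt}t^{-s}\,dt$ converges if and only if $\Rea s<1$. The factor $e^{-pt}$ controls $t\to\infty$ for every $s$, so there is no lower bound $\Rea s>0$, and an unbounded \emph{western} end does no harm.

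\begin{enumerate}
\item On $\gamma_{\leftarrow}$ with $\sigma_0<1$ one has $\Rea s\le\sigma_0<1$ throughout. Your Tonelli argument from (i) then goes through verbatim.
\item The inner factor is now $\Gamma(1-\Rea s)(\Rea p)^{\Rea s-1}=e^{O(|s|\ln|s|)}$. This is swamped by the decay $e^{-c|s|^{2}\ln|s|}$.
\item Row one of Lemma~\ref{lem:insert} with $c=1$ then gives a genuine $x$-space identity,
$\int_0^\infty e^{-pt}\Kl(t)\,dt=(2\pi)^{\frac{1-\tau}{2}}\tau^{\frac12}p^{-1}K^{\leftarrow}_{\mathrm{new}}(\tau/p)$.
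The new kernel has order $(m+1,n,p+1,q+1)$, with new denominator-type $a=1$ and numerator-type $b=1+\tau$.
\item It is admissible exactly when $\lambda_\sharp<\sigma_0<\min(\lambda_\flat,1)$. This is the condition \eqref{eq:RLhyp} already used in Theorem~\ref{thm:RL}.
\end{enumerate}

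So the assertion that no class satisfies the convergence condition, which you carried over from the statement, is false for $\bullet=\leftarrow$. The obstruction is genuine only for the three classes with an eastern end, and separately for the Mellin convolution. You should not present that part of (iv) as proved.
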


\begin{proof}
Each statement follows by inserting the corresponding multiplier of
\eqref{eq:multipliers} into the contour integral and applying
Lemma~\ref{lem:insert}. We give (i); (ii) is the same computation and (iii) is
differentiation under the integral sign. Interchanging the order of integration,
\begin{align*}
\int_{0}^{1}t^{\beta-1}(1-t)^{\gamma-1}K^{\bullet}(zt)\,dt
&=\frac{1}{2\pi i}\int_{\gamma}\phi(s)z^{-s}
\left[\int_{0}^{1}t^{\beta-s-1}(1-t)^{\gamma-1}dt\right]ds\\
&=\frac{1}{2\pi i}\int_{\gamma}
\frac{\Gamma(\beta-s)\Gamma(\gamma)}{\Gamma(\beta+\gamma-s)}\,\phi(s)z^{-s}ds,
\end{align*}
the inner beta integral converging under \eqref{eq:eulerhyp}, which holds
throughout $\gamma_{\leftarrow}$. The interchange itself needs more than the
absolute convergence of Theorem~\ref{thm:A}, because the beta kernel is an extra
factor; it is justified because that factor is only polynomially large. Indeed
$B(\beta-s,\gamma)=\Gamma(\beta-s)\Gamma(\gamma)/\Gamma(\beta+\gamma-s)
=O\big(|s|^{-\Rea\gamma}\big)$ uniformly as $s\to\infty$ along the two western
rays, by Stirling, whereas $|\phi(s)z^{-s}|$ decays like
$\exp(-c|s|^{2}\ln|s|)$ there (Theorem~\ref{thm:A}); the product is therefore
absolutely integrable on $\gamma_{\leftarrow}\times(0,1)$ and Fubini--Tonelli
applies. Applying the first and third rows of
Lemma~\ref{lem:insert} with $c=\beta$ and $c=\beta+\gamma$ gives the parameter
lists, the constant $\Gamma(\gamma)\cdot\kappa\tau^{\beta-\frac12}\cdot
\kappa^{-1}\tau^{\frac12-\beta-\gamma}=\Gamma(\gamma)\tau^{-\gamma}$, and the
scale $\lambda=\tau\cdot\tau^{-1}=1$; the two $\tau^{\pm s}$ factors cancel, so
the argument is unchanged. Item (iv) is Theorem~\ref{thm:kernelcalc}.
\end{proof}

\begin{remark}[Fubini replaces Mellin --- where it can]\label{rem:noMellin}
The usual derivation multiplies the Mellin transform and inverts; by
Theorem~\ref{thm:F1} that route is closed for $N<0$. The proof above substitutes
a different one: insert the multiplier \emph{under} the contour integral and
interchange, which absolute convergence (Theorem~\ref{thm:A}) permits. The price
is that the inner integral must converge \emph{on the contour}, and that is what
restricts (i)--(ii) to $\gamma_{\leftarrow}$ and blocks (iv) altogether. The
honest summary: the kernel calculus is unconditional and covers everything; the
analytic identities are proved for the operators and classes listed, and are
formal elsewhere.
\end{remark}

\begin{remark}[Two realisations of one multiplier, and which to prefer]
\label{rem:tworealisations}
Because $(-1)^{k}\Gamma(s+k)/\Gamma(s)=\Gamma(1-s)/\Gamma(1-s-k)$, the
derivative multiplier may equally be inserted through rows \emph{one and three}
of Lemma~\ref{lem:insert}, with $c=1$ and $c=1-k$. That is the route Vaz takes
for the Fox--Barnes $J$-function \cite[Prop.~5(i)]{Vaz2025}. It yields a
different parameter list --- of type $(m+2,n)$ rather than $(m,n+2)$ --- and a
$K$-function differing from the one in \eqref{eq:derivk} by the factor
$(-1)^{k}$, so that the two right-hand sides agree; verified to
$9.2\times10^{-23}$ (check CE1 of Table~\ref{tab:C}).

The two are nevertheless not interchangeable in practice. The second realisation
appends the $a$-parameter $1-k+\tau$ to the denominator block, which lowers
$\lambda_{\flat}$ to $\min(\lambda_{\flat},1-k+\tau)$ and therefore destroys
$\lambda_{\sharp}<\lambda_{\flat}$ once $k$ is large --- for the running example
\eqref{eq:example} already at $k=3$, where the enlarged parameters give
$\lambda_{\flat}=\tau-2<\lambda_{\sharp}$ and no admissible contour exists. The
first realisation appends nothing to the denominator $a$-block and therefore
leaves $\lambda_{\flat}$ untouched, but it adds the denominator $b$-parameters
$1+\tau-k$ and $1$. Hence it is admissible precisely when
\[
\max\{\lambda_{\sharp},-k,-\tau\}<\lambda_{\flat};
\]
it is usually more robust, but not automatically admissible for every $k$. A
simple sufficient condition valid for all $k\ge1$ is
$\max\{\lambda_{\sharp},-1,-\tau\}<\lambda_{\flat}$. When both
realisations are available, insert through $\Gamma(c+s)$.
\end{remark}

\subsection{Fractional derivatives}

For a real non-integral order $\delta>0$, put
$r=\lceil\delta\rceil$ and $\nu=r-\delta\in(0,1)$.  The
Riemann--Liouville derivative is the composition
\[
D^{\delta}_{0+}=\frac{d^{r}}{dz^{r}}\circ I^{\,\nu}_{0+}.
\]
Thus no analytic continuation in $\delta$ is needed: the integral leg is the
Erd\'elyi--Kober rule of Theorem~\ref{thm:closure}(ii), and the differential leg
is \eqref{eq:derivk}.  The shift law must, however, be used with its full
$\alpha$-dependent constant and argument scaling.

\begin{theorem}[Riemann--Liouville derivatives]\label{thm:RL}
Let $N<0$, let $\bullet=\leftarrow$ with abscissa $\sigma_{0}$, and let
$\delta\in\R_{>0}\setminus\Z$. Put
$r=\lceil\delta\rceil$ and $\nu=r-\delta\in(0,1)$. Assume
\begin{equation}\label{eq:RLhyp}
\lambda_{\sharp}<\sigma_{0}<\min(\lambda_{\flat},1),
\end{equation}
and that the Erd\'elyi--Kober, shifted, and final enlarged parameter sets each
satisfy the relevant separation condition
$\lambda_{\sharp}<\lambda_{\flat}$. Then, for $z>0$,
$I^{\,\nu}_{0+}K^{\leftarrow}$ exists and
\begin{equation}\label{eq:RL}
\begin{aligned}
D^{\delta}_{0+}K^{\leftarrow}\!\left(\begin{matrix}\va\\ \vb\end{matrix}\middle|z;\tau,\alpha\right)
&=\frac{(-1)^{r}\tau^{\,r-\nu}e^{\pi\alpha\nu^{2}/\tau}}{z^{r}}\\
&\quad\times
K^{\leftarrow}{}^{m+2,\,n+2}_{\,p+4,q+4}\!
\left(\begin{matrix}1+\tau,\ 1-r,\ \va-\nu,\ 1-\nu,\ 1+\tau\\
        1,\ 1+\tau-\nu,\ \vb-\nu,\ 1+\tau-r,\ 1\end{matrix}
\middle|e^{-2\pi\alpha\nu/\tau}z;\tau,\alpha\right),
\end{aligned}
\end{equation}
where in each list the first two displayed new entries are numerator-type and
the last two are denominator-type.  On the right-hand side the branch is fixed
by
$\log(e^{-2\pi\alpha\nu/\tau}z)=\log z-2\pi\alpha\nu/\tau$.
In particular $N$ is unchanged.
\end{theorem}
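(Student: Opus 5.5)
The plan is to compute $D^{\delta}_{0+}K^{\leftarrow}=\frac{d^r}{dz^r}\,I^{\nu}_{0+}K^{\leftarrow}$ in three legs. Each leg invokes one rule already proved, and the work is in tracking the parameter lists and constants.

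\emph{Leg 1 (fractional integral).} Write $I^{\nu}_{0+}=z^{\nu}I^{0,\nu}_{1}$ and apply Theorem~\ref{thm:closure}(ii) with $\gamma=0$, $\delta\mapsto\nu$. Its hypothesis \eqref{eq:ekhyp} reads $1>\sigma_0$ and $\nu>0$, and both are supplied by \eqref{eq:RLhyp}. For $z>0$ this gives existence of $I^{\nu}_{0+}K^{\leftarrow}$ together with
\[
I^{\nu}_{0+}K^{\leftarrow}(z)=\tau^{-\nu}z^{\nu}\,K^{\leftarrow}{}^{m+2,n}_{p+2,q+2}\!\left(\begin{matrix}\va,\ 1,\ \nu+1+\tau\\ 1+\tau,\ \nu+1,\ \vb\end{matrix}\middle|z\right).
\]
Here the Fubini interchange is inherited from the proof of (i) on $\gamma_{\leftarrow}$.

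\emph{Leg 2 (absorbing $z^{\nu}$).} I would absorb the factor $z^{\nu}$ by the shift law (E1) with $c=-\nu$, read backwards. We have $z^{\nu}K(\va;\vb\,|\,z)=e^{-\pi\alpha\nu^2/\tau}K(\va-\nu;\vb-\nu\,|\,e^{2\pi\alpha\nu/\tau}z)$, and the argument scaling must be handled carefully. I expect it is cleaner to rewrite the claimed final formula via (E1) than to push the scaling through; the factor $e^{\pi\alpha\nu^2/\tau}$ and the argument $e^{-2\pi\alpha\nu/\tau}z$ in \eqref{eq:RL} indicate the sign convention the authors intend.

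Concretely, I would proceed at the integrand level. Write $z^{\nu}\cdot z^{-s}=z^{-(s-\nu)}$ and substitute $s=u+\nu$. This translates the contour, so by the proof of (E1) the class $\leftarrow$ is preserved. It turns $\phi(s)$ into a kernel in $u$ with all parameters shifted by $-\nu$, times $e^{\pi\alpha(u+\nu)^2/\tau}=e^{\pi\alpha u^2/\tau}e^{\pi\alpha\nu^2/\tau}e^{2\pi\alpha\nu u/\tau}$. The last factor is absorbed into the argument as $(e^{-2\pi\alpha\nu/\tau}z)^{-u}$, with the branch fixed as stated. This produces the constant $e^{\pi\alpha\nu^2/\tau}$, the argument $e^{-2\pi\alpha\nu/\tau}z$, and the parameter lists $\va-\nu,\,1-\nu,\,1+\tau$ (a) and $1+\tau-\nu,\,1,\,\vb-\nu$ (b), matching the middle entries in \eqref{eq:RL}.

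\emph{Leg 3 (integer derivative).} I would then apply \eqref{eq:derivk} with $k=r$ to the function of $w=e^{-2\pi\alpha\nu/\tau}z$. The chain rule contributes $(e^{-2\pi\alpha\nu/\tau})^{r}$, and this cancels against $w^{-r}=e^{2\pi\alpha\nu r/\tau}z^{-r}$, leaving $(-\tau)^r z^{-r}$. Adjoining the numerator $a$'s $1+\tau,\,1-r$ and the denominator $b$'s $1+\tau-r,\,1$ gives the outer entries of \eqref{eq:RL}. Collecting constants gives $\tau^{-\nu}(-\tau)^r e^{\pi\alpha\nu^2/\tau}=(-1)^r\tau^{r-\nu}e^{\pi\alpha\nu^2/\tau}$. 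Differentiation under the integral is legitimate by Theorem~\ref{thm:closure}(iii), which holds for every class. Each leg needs its own enlarged parameter set to satisfy $\lambda_\sharp<\lambda_\flat$, and these separation conditions are exactly what the theorem assumes. Counting blocks gives $(m+2,n+2,p+4,q+4)$ and $N$ unchanged, by Theorem~\ref{thm:Ninvariant} or by direct count.

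The main obstacle is Leg 2. Getting the sign of the $\alpha$-dependent constant and of the argument rescaling right requires consistency between (E1) as stated and the direct substitution, as does the branch of $\log$ on the Riemann surface. I would settle this by doing the substitution $s=u+\nu$ directly inside the contour integral rather than quoting (E1), and then check it against a numerical instance.
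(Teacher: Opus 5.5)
Your three legs are exactly the paper's proof, with the same constants, parameter lists and block count. They are Theorem~\ref{thm:closure}(ii) with $\gamma=0$ and $\delta=\nu$; then absorbing $z^{\nu}$ via the translation $s=u+\nu$ (that is, (E1) with $c=-\nu$); then \eqref{eq:derivk} with $k=r$, with the chain-rule factor cancelling against $(e^{-2\pi\alpha\nu/\tau}z)^{-r}$. The only slip is the identity you first display in Leg~2 as (E1) ``read backwards'', which has both signs wrong (the correct form is $z^{\nu}K(\va;\vb\,|\,z)=e^{\pi\alpha\nu^{2}/\tau}K(\va-\nu;\vb-\nu\,|\,e^{-2\pi\alpha\nu/\tau}z)$), but the direct substitution you actually carry out produces precisely this, so the argument as executed is correct.
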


\begin{proof}
Since $\nu>0$ and $1>\sigma_{0}$ by \eqref{eq:RLhyp},
\eqref{eq:ekhyp} holds with $\gamma=0$ and $\delta=\nu$. Hence
Theorem~\ref{thm:closure}(ii) gives
$I^{0,\nu}_{1}K^{\leftarrow}=\tau^{-\nu}K^{\leftarrow}_{Q}$, where
$Q$ has order $(m+2,n,p+2,q+2)$ and parameter lists
$\va,1,1+\nu+\tau$ and $1+\nu,1+\tau,\vb$ (with the two new $a$'s
denominator-type and the two new $b$'s numerator-type). The substitution
$t=z\sigma$ gives
\[
I^{\,\nu}_{0+}K^{\leftarrow}(z)=\tau^{-\nu}z^{\nu}K^{\leftarrow}_{Q}(z).
\]
Solving the shift law \eqref{eq:E1} for the power factor, with $c=-\nu$, yields
\[
z^{\nu}K^{\leftarrow}_{Q}(z)
=e^{\pi\alpha\nu^{2}/\tau}
 K^{\leftarrow}_{Q-\nu}\!\left(e^{-2\pi\alpha\nu/\tau}z\right).
\]
Finally apply \eqref{eq:derivk} with $k=r$.  The chain-rule factor from the
scaled argument cancels the corresponding power in
$(e^{-2\pi\alpha\nu/\tau}z)^{-r}$, leaving the prefactor
$\tau^{-\nu}(-\tau)^r e^{\pi\alpha\nu^2/\tau}$ and the argument displayed in
\eqref{eq:RL}.  The parameter lists follow by first subtracting $\nu$ from the
Erd\'elyi--Kober parameters and then appending the two numerator $a$-parameters
and two denominator $b$-parameters of \eqref{eq:derivk}.  Invariance of $N$ is
Theorem~\ref{thm:Ninvariant}(i).
\end{proof}

\begin{remark}
The restriction $\sigma_{0}<1$ is genuine: it is
$\Re(\gamma+1)>\sigma_{0}$ with $\gamma=0$ in the integral leg. Integer
$\delta$ is handled directly by \eqref{eq:derivk}.  We do not state a Caputo
analogue here.  A direct contour proof for
${}^{C}D^{\delta}_{0+}=I^{\nu}_{0+}\circ d^{r}/dz^{r}$ requires at least
$\sigma_{0}<1-r$, as well as the separation conditions for the differentiated
kernel and the relevant endpoint data; without those additional hypotheses the
Riemann--Liouville and Caputo operators need not agree.
\end{remark}

\subsection{\texorpdfstring{$N$ is a strict invariant of the calculus}{N is a strict invariant of the calculus}}

\begin{theorem}\label{thm:Ninvariant}
\begin{enumerate}[label=\textup{(\roman*)},leftmargin=2.4em,itemsep=3pt]
\item Each of the four rules of Lemma~\ref{lem:insert} leaves both $N$ and
$p-q$ unchanged, and shifts $\mu$ and $\nu$ by $\pm\tau$ according to
\[
\Delta\mu=(+\tau,-\tau,-\tau,+\tau),\qquad
\Delta\nu=(-\tau,-\tau,+\tau,+\tau)
\]
for the rows in the order listed. Consequently the Euler, Erd\'elyi--Kober,
differentiation and Laplace insertions --- that is, every \emph{single-kernel}
operation of Theorem~\ref{thm:closure} --- preserve $N$ exactly.
\item Mellin convolution is the exception, because it multiplies two kernels
rather than inserting a multiplier into one: there $N=N_{1}+N_{2}$,
$\alpha=\alpha_{1}+\alpha_{2}$, and likewise for $\mu$, $\nu$, $\xi$.
\end{enumerate}
\end{theorem}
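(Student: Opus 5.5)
The plan is pure bookkeeping. All five quantities $N$, $p-q$, $\mu$, $\nu$, $\xi$ are determined by which parameters sit in which of the four blocks (numerator/denominator $a$, numerator/denominator $b$). $N$ and $p-q$ depend only on the block counts, and $\mu,\nu,\xi$ are signed linear sums over the blocks. So for (i) I will read off, from the table of Lemma~\ref{lem:insert}, the change in $(m,n,p,q)$ and the signed contribution of each new parameter, row by row. The prefactor $C\lambda^{-s}$ only rescales $z$ and the constant, so it touches none of these quantities. Likewise $\alpha$ is untouched, as already noted in the proof of Lemma~\ref{lem:insert}.

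\emph{Step 1: counts.} Every row appends exactly one $a$ and one $b$, so $p-q$ is unchanged. For $N$:
\begin{itemize}[leftmargin=1.6em,itemsep=1pt]
\item Rows one and three add one new numerator-type $b$ (increments $m,q$) and one denominator-type $a$ (increments $p$ only). Hence $\Delta N=2\cdot1-1-1=0$.
\item Rows two and four add a numerator-type $a$ (increments $n,p$) and a denominator-type $b$ (increments $q$ only). Again $\Delta N=2-1-1=0$.
\end{itemize}

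\emph{Step 2: the shifts of $\nu$ and $\mu$.} Recall that $\nu=\sum a-\sum b$ ignores blocks. Recall also that $\mu$ carries sign $+$ for numerator-type and $-$ for denominator-type entries.
\begin{itemize}[leftmargin=1.6em,itemsep=1pt]
\item Row 1 ($a=c$ den., $b=c+\tau$ num.): $\Delta\nu=c-(c+\tau)=-\tau$ and $\Delta\mu=-c+(c+\tau)=+\tau$.
\item Row 2 ($a=1-c$ num., $b=1+\tau-c$ den.): $\Delta\nu=-\tau$ and $\Delta\mu=(1-c)-(1+\tau-c)=-\tau$.
\item Row 3 ($a=c+\tau$ den., $b=c$ num.): $\Delta\nu=+\tau$ and $\Delta\mu=-(c+\tau)+c=-\tau$.
\item Row 4 ($a=1+\tau-c$ num., $b=1-c$ den.): $\Delta\nu=+\tau$ and $\Delta\mu=+\tau$.
\end{itemize}
This reproduces the two displayed vectors.

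\emph{Step 3: composite operators.} Theorem~\ref{thm:kernelcalc} expresses the Euler, Erd\'elyi--Kober and differentiation multipliers as compositions of rows, so $N$ is preserved by induction on the number of factors.

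The only point needing care is the Laplace transform, because it also involves the reflection $s\mapsto1-s$. I will split this into two pieces:
\begin{itemize}[leftmargin=1.6em,itemsep=1pt]
\item the reflection $s\mapsto-s$, which by the first identity of \cite[Eq.~(15)]{KK2024} sends $(m,n,p,q)\mapsto(n,m,q,p)$, visibly leaving $N=2(m+n)-p-q$ fixed;
\item a unit translation, which by the shift law (E1) only translates all parameters by a common constant and changes no counts.
\end{itemize}
I expect this reflection step to be the one place requiring an argument beyond reading the table. It also explains why the theorem claims invariance of $N$ (and $p-q$ for the pure insertions) but does not claim the $\mu,\nu$ shift pattern for the Laplace case.

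\emph{Step 4: Mellin convolution, part (ii).} By Theorem~\ref{thm:kernelcalc} the product $\phi_1\phi_2$ has concatenated blocks and $\alpha=\alpha_1+\alpha_2$. Since $m,n,p,q$ add under concatenation and $N$ is linear in them, $N=N_1+N_2$. Likewise $\mu,\nu,\xi$ are sums over block entries with block-determined signs, so they are additive under concatenation.
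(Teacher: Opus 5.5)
Your proposal is correct and follows the paper's own proof. You count the block increments row by row to get $\Delta N=\Delta(p-q)=0$, substitute the new parameter pairs into the definitions of $\mu$ and $\nu$, and use additivity under block concatenation for Mellin convolution. Your explicit treatment of the Laplace reflection $s\mapsto1-s$ is a small addition that the paper leaves implicit: it sends $(m,n,p,q)\mapsto(n,m,q,p)$, which fixes $N$ but not $p-q$.
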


\begin{proof}
(i) Rows one and three insert one $b$ in the numerator block ($m\!+\!1$,
$q\!+\!1$) and one $a$ in the denominator block ($p\!+\!1$), so
$N\mapsto2(m+1+n)-(p+1)-(q+1)=N$ and $p-q\mapsto p-q$. Rows two and four insert
one $a$ in the numerator block ($n\!+\!1$, $p\!+\!1$) and one $b$ in the
denominator block ($q\!+\!1$), with the same conclusion. The displayed
$\Delta\mu,\Delta\nu$ follow from \eqref{eq:munuxi} by substituting the four
pairs of new parameters. (ii) Immediate from the concatenation of the two
parameter blocks in the kernel-product statement of
Theorem~\ref{thm:kernelcalc}.
\end{proof}

\begin{corollary}[Why every known example has $N=0$]\label{cor:Ntrap}
The embedding \eqref{eq:GammaviaG} sends every Meijer $G$-function --- hence
every ${}_pF_q$ --- to a $K$-function with $N=0$. By
Theorem~\ref{thm:Ninvariant}, starting from any finite collection of such
functions and applying any composition of the operations of
Theorem~\ref{thm:closure} together with \textup{(E1)}--\textup{(E3)}, one never
leaves the stratum $N=0$: the single-kernel operations preserve $N$ by
Theorem~\ref{thm:Ninvariant}(i), and Mellin convolution adds it by
Theorem~\ref{thm:Ninvariant}(ii), so $0+0=0$ closes the stratum under that
operation too. In particular:
\begin{enumerate}[label=\textup{(\alph*)},leftmargin=2.4em,itemsep=2pt]
\item all five applications in \cite[\S6]{KK2024} have $N=0$ --- as they do, by
direct inspection: $K^{1,1}_{2,2}$, $K^{1,1}_{2,2}$, $K^{1,2}_{3,3}$,
$K^{p+1,1}_{p+2,p+2}$, $K^{2,0}_{2,2}$;
\item a $K$-function with $N\ne0$, and in particular the $N<0$ functions of this
paper, cannot be produced from hypergeometric input by the classical transform
calculus. It must be assembled from double-gamma data directly --- for instance
from a Mellin transform satisfying $M(s+1)=R(s)M(s)$ with $R$ a ratio of gammas
whose numerator and denominator have \emph{different} lengths.
\end{enumerate}
\end{corollary}

\begin{remark}[The Fox $H$ boundary]\label{rem:foxH}
The restriction $\beta=1$ in Theorem~\ref{thm:closure}\,(ii) is essential. For
general $\beta$ the Erd\'elyi--Kober multiplier is
$\Gamma(\gamma+1-s/\beta)/\Gamma(\gamma+\delta+1-s/\beta)$, whose gamma
arguments are affine in $s$ with slope $1/\beta$, whereas the kernel
\eqref{eq:Gscript} admits only the slopes $\pm1$. This is precisely the
Meijer\,$G$ versus Fox\,$H$ dichotomy one rung down. The modular law
\textup{(E3)} does not repair it, because it rescales \emph{all} slopes
simultaneously and so cannot match a single mismatched factor. The object that
does accommodate independent slopes in a double-gamma kernel is the Fox--Barnes
$J$-function of Vaz \cite{Vaz2025}, whose slopes are arbitrary but positive ---
which is exactly the range $1/\beta>0$ needed here. The natural setting for
Erd\'elyi--Kober operators with $\beta\ne1$ is therefore that family rather than
this one.
\end{remark}

\begin{remark}[Degenerate pairs and cancelled poles]\label{rem:degenerate}
Every application of Lemma~\ref{lem:insert} creates a parameter pair
$\{c,\,c+\tau\}$ split between a numerator and a denominator block. The
numerator zeros then cancel a sublattice of the new poles: for the first row,
the new $a$-parameter $c$ contributes poles $\{c+l\tau+k\}_{l,k\ge0}$ while the
new $b$-parameter $c+\tau$ contributes zeros
$\{c+\tau+l\tau+k\}_{l,k\ge0}$, and the surviving poles are exactly
$\{c+k\}_{k\ge0}$ --- which are, as they must be, the poles of the multiplier
$\Gamma(c-s)$. This is the exact analogue of the Meijer\,$G$ reduction that
occurs when $a_j-b_k\in\Z_{\ge0}$; it means the genericity hypothesis
\eqref{eq:generic} \emph{fails by construction} after any transform. Practically,
the residue series of Theorem~\ref{thm:B} must skip the cancelled lattice
points, where the residue is $0$ rather than infinite; an implementation that
does not test for this raises a gamma-pole error instead of returning zero.
\end{remark}

\section{Numerical verification}\label{sec:numerics}

\subsection{\texorpdfstring{An evaluator for $G(z;\tau)$}{An evaluator for G(z;tau)}}
To the best of our knowledge no widely used numerical library provides
$G(z;\tau)$ for general $\tau$ (\texttt{mpmath.barnesg} is
the case $\tau=1$). We evaluate $\ln G$ by Lemma~\ref{lem:binet}, computing
$\int_{0}^{\infty}e^{-zx}f_{3}(x)dx$ in two independent ways --- by quadrature,
and by Watson's lemma
$\sum_{j\ge0}j!\,F_{j+3}\,z^{-(j+1)}$ where $F_{n}$ are the Taylor coefficients
of $f$ --- after first translating $z$ rightwards by the first relation of
\eqref{eq:quasiper}. Table~\ref{tab:V} lists the checks; all are performed at
$30$ decimal digits.

\begin{table}[t]
\centering\small
\caption{Validation of the $G(z;\tau)$ evaluator, at $30$ digits.
Worst relative error over the stated sample.}
\label{tab:V}
\begin{tabular}{@{}llc@{}}
\toprule
& check & worst rel.\ error\\
\midrule
V1 & $\tau=1$ against \texttt{mpmath.barnesg}, $7$ real and complex points
   & $2.8\times10^{-22}$\\
V2 & both quasi-periods \eqref{eq:quasiper}, $5$ values of $\tau$, $5$ complex $z$
   & $1.0\times10^{-21}$\\
V3 & modular transformation \eqref{eq:modular}, $4$ values of $\tau$
   & $9.8\times10^{-21}$\\
V4 & the gamma representation \eqref{eq:GammaviaG}, $5$ values of $\tau$
   & $1.0\times10^{-21}$\\
V5 & $b_{0}(\tau)$ by the asymptotic route vs.\ quadrature; $b_{0}(1)$ closed form
   & $6.4\times10^{-21}$\\
V6 & $G'$ at $9$ lattice zeros: \eqref{eq:Gprime} vs.\ a difference quotient
   & $4.7\times10^{-20}$\\
\bottomrule
\end{tabular}
\end{table}

\subsection{\texorpdfstring{The four $K$-functions}{The four K-functions}}
As a running example we take
\begin{equation}\label{eq:example}
m=n=0,\quad p=q=1,\quad a_{1}=0.3,\quad b_{1}=0.8,\quad
\tau=\tfrac{\sqrt5-1}{2},
\end{equation}
so that $N=-2$, $\lambda_{\sharp}=b_{1}-\tau-1=-0.8180\ldots<\lambda_{\flat}
=a_{1}=0.3$, and both $\alpha=0$ and $\alpha=0.25$ are tested. At $z=2$,
$\alpha=0$:
\[
\begin{aligned}
\Kr&=0.2505124042787052, &\qquad \Kl&=0.2624790203086704,\\
\Ku&=0.2564957122936878+0.0007915098718586\,i, &\qquad
\Kd&=\overline{\Ku},
\end{aligned}
\]
whose smallest pairwise relative gap is $6.2\times10^{-3}$: the four functions
are genuinely distinct, and $\Kr+\Kl=2\Rea\Ku$ to a relative error of
$8\times10^{-24}$. Figure~\ref{fig:kfour} shows the same picture over
$z\in[0.25,7.7]$.

\begin{figure}[t]
\centering
\includegraphics[width=\textwidth]{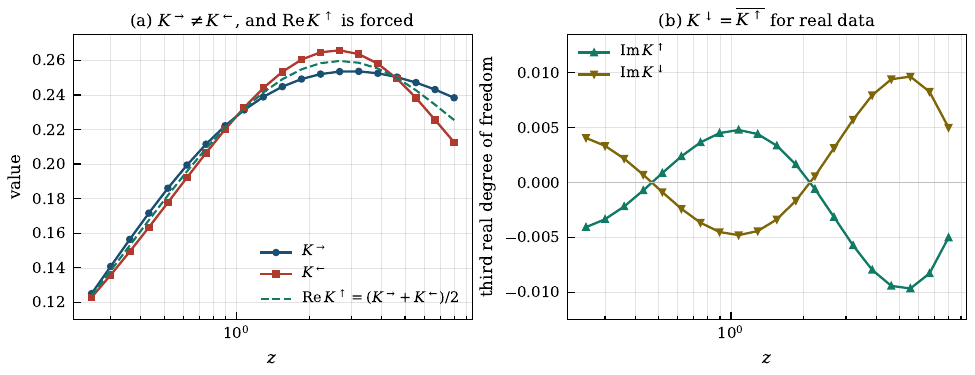}
\caption{The four $K$-functions for the parameters \eqref{eq:example},
$\alpha=0$. (a) $\Kr$ and $\Kl$ are distinct real functions; the dashed curve is
$\tfrac12(\Kr+\Kl)$, which Corollary~\ref{cor:real} forces to equal
$\Rea\Ku=\Rea\Kd$. (b) The remaining real degree of freedom,
$\Ima\Ku=-\Ima\Kd$. Over the whole sweep the relation \eqref{eq:relation} holds
to $3.7\times10^{-18}$.}
\label{fig:kfour}
\end{figure}

Table~\ref{tab:T} lists the checks of \S\S4--8; all are performed at $26$
decimal digits. Throughout, what is validated is a \emph{representative} set of
identities and parameter instances --- one $\tau$, one non-degenerate parameter
set, $\alpha\in\{0,0.25\}$ --- not the theorems in the generality in which they
are stated.

\begin{table}[t]
\centering\small
\caption{Verification of the results of this paper, at $26$ digits.
Worst relative error over the stated sample.}
\label{tab:T}
\begin{tabular}{@{}llc@{}}
\toprule
& check & worst rel.\ error\\
\midrule
T1 & homotopy invariance within each of the four classes ($h$, $\sigma_0$ varied)
   & $1.7\times10^{-21}$\\
T1b & the four values are pairwise distinct
   & gap $\ge6.2\times10^{-3}$\\
T1c & Corollary~\ref{cor:real}: $\Kr,\Kl\in\R$ and $\Kd=\overline{\Ku}$
   & exact\\
T2 & Theorem~\ref{thm:C}, each term computed on a \emph{different} realisation
   & $1.0\times10^{-21}$\\
T3 & Theorem~\ref{thm:B}: residue series vs.\ contour integral
   & $1.8\times10^{-17}$\\
T4 & Theorem~\ref{thm:D}: vanishing, relative to the surviving partner
   & $7.7\times10^{-22}$\\
T5a & Theorem~\ref{thm:E}(E1), all four classes, $\alpha\in\{0,0.25\}$
   & $2.0\times10^{-27}$\\
T5b & Theorem~\ref{thm:E}(E2), all four pairings
   & $1.6\times10^{-27}$\\
T5c & Theorem~\ref{thm:E}(E3), all four classes, $\alpha\in\{0,0.25\}$
   & $5.4\times10^{-21}$\\
T6 & sign audit of \cite[Remark after Thm.~5]{KK2024}, see \S\ref{sec:errata}
   & ratio $=-1$\\
T7 & Theorem~\ref{thm:F}, $3\times3$ grid on $\Lp$ and $2\times2$ on $\Lm$
   & $2.8\times10^{-23}$\\
\bottomrule
\end{tabular}
\end{table}

\subsection{The transform calculus}
The closure results of \S\ref{sec:closure} are checked in two independent ways.
First, at the level of the identity \eqref{eq:insert} itself, which is exactly
the parameter bookkeeping and can be tested to full precision at complex $s$
without ever evaluating a $K$-function. Second, end-to-end in $x$-space, where
the left-hand side is an honest quadrature of $K^{\bullet}$ and the right-hand
side an independently computed $K$-function --- itself evaluated both by its
residue series and by its contour integral. Table~\ref{tab:C} reports both, for
the running example \eqref{eq:example} ($N=-2$), $\alpha\in\{0,0.25\}$, and
$\beta=\tfrac32$, $\gamma=2$, $\delta=\tfrac7{10}$. The default abscissa there is
$\sigma_{0}=\tfrac12(\lambda_{\sharp}+\lambda_{\flat})=-0.2590\ldots$, so
$\Rea\beta>\sigma_{0}$ and $\Rea(\gamma+1)>\sigma_{0}$ and both
\eqref{eq:eulerhyp} and \eqref{eq:ekhyp} hold on the sample.

\begin{table}[t]
\centering\small
\caption{Verification of \S\ref{sec:closure}, at $25$ digits. Worst relative
error over the stated sample.}
\label{tab:C}
\begin{tabular}{@{}l@{\ \ }p{0.60\textwidth}@{\ \ }c@{}}
\toprule
& check & worst rel.\ error\\
\midrule
CA & Lemma~\ref{lem:insert}, all four rules; $3$ values of $c$, $5$ complex
     $s$, $\alpha\in\{0,0.25\}$ & $7.0\times10^{-23}$\\
CA2 & the composites: Euler, Erd\'elyi--Kober, and the multiplier $-s$ of
      \eqref{eq:deriv} & $1.1\times10^{-22}$\\
CD & Theorem~\ref{thm:Ninvariant}: $\Delta N=\Delta(p-q)=0$,
     $|\Delta\mu|=|\Delta\nu|=\tau$; $N$ additive under \eqref{eq:mconv}
   & exact\\
CB & Theorem~\ref{thm:closure}(i) end-to-end: quadrature of \eqref{eq:euler}
     vs.\ the new $K^{\leftarrow}$, $z\in\{0.6,1.3\}$ & $2.1\times10^{-23}$\\
CC & Theorem~\ref{thm:closure}(iii) end-to-end: $K'$ by central difference vs.\
     \eqref{eq:deriv}, $\bullet\in\{\rightarrow,\leftarrow\}$
   & $2.1\times10^{-13}$\\
CE1 & \eqref{eq:derivk} for $k=1,2,3$ vs.\ \texttt{mp.diff}, and the two
      realisations of Remark~\ref{rem:tworealisations} against each other
   & $9.2\times10^{-23}$\\
CE2 & Theorem~\ref{thm:RL}: $D^{\delta}_{0+}K^{\leftarrow}$ leg by leg,
      $\delta\in\{0.7,1.4\}$ and $\alpha\in\{0,0.25\}$ & $1.5\times10^{-8}$\\
\bottomrule
\end{tabular}
\end{table}

Two remarks on CB and CC. The constants come out as predicted and are worth
recording because they are sharp tests of the bookkeeping: for
$\tau=\tfrac{\sqrt5-1}{2}$ the Euler prefactor is
$\Gamma(\gamma)\tau^{-\gamma}=\tau^{-2}=2.6180339887\ldots$ and the
differentiation prefactor is exactly $\tau$, while in both cases the argument
scale is $\lambda=1$, the two factors $\tau^{\pm s}$ cancelling. The residual
$2.1\times10^{-13}$ in CC is the central-difference step, not the identity.

Finally, CB is the check that exposed Remark~\ref{rem:degenerate}: with the
transformed parameters the pair $\{\beta,\beta+\tau\}$ makes a numerator factor
vanish on part of $\Lp$, and a residue routine that does not detect this fails
with a gamma-pole error rather than returning zero.

\subsection{The four points the proofs turn on}
Four of the statements above have delicate proofs, and each has a dedicated
check. Table~\ref{tab:R} reports them.

\begin{table}[t]
\centering\small
\caption{Checks bearing directly on the hypotheses of
\S\ref{sec:arcs}--\S\ref{sec:closure}, at $22$ digits.}
\label{tab:R}
\begin{tabular}{@{}llc@{}}
\toprule
& check & result\\
\midrule
R1 & recurrence compatibility \eqref{eq:compat}
   & $3.0\times10^{-22}$\\
R2a & the four classes on \emph{slanted} rays: vs.\ horizontal, and vs.\ a
      second $\eps$ & $5.3\times10^{-23}$\\
R2b & relation \eqref{eq:relation} computed entirely on slanted rays
   & $6.2\times10^{-23}$\\
R3 & evaluation determinant, $(\Kr,\Kl,\Ku)$ at $z\in\{0.5,1.6,4.0\}$
   & $|\det|/\max^{3}=2.4\times10^{-3}$\\
R4 & closing-arc bound \eqref{eq:arcbound}, sup form
   & see below\\
R5 & adaptive radius: mean of the arc integral over $r\in[R,2R]$,
     \eqref{eq:Mdef} & see below\\
R6a & the split \eqref{eq:split}: $\ln|\Pi|$ from $S_{2}$ vs.\ by subtraction
   & $7.6\times10^{-21}$\\
R6 & growth of $\max_{\theta}\ln|\Pi|$ on the eastern arcs, $R\le12$
   & bounded; see below\\
\bottomrule
\end{tabular}
\end{table}

R2 is the check that matters for Theorem~\ref{thm:C}: the relation is verified on
a realisation in which \emph{every} piece lies inside a decay sector, so it does
not inherit the difficulty of \S\ref{sec:arcs}. R4 probes \eqref{eq:arcbound}
at four radii: the quantity
$\min_{R'\in[R,R+1]}\max_{|\arg s|\le\eps_{1}}|\phi(s)z^{-s}|$ takes the values
$2.1\times10^{-6}$, $1.1\times10^{-22}$, $1.2\times10^{-52}$,
$6.4\times10^{-97}$ at $R=3,5,7,9$. This is strong numerical support for the
$\exp(-cR^{2}\ln R)$ behaviour Lemma~\ref{lem:arcs} asserts, for this parameter
set and this $\tau$; it is not a verification of an asymptotic hypothesis.

R5 tests the hypothesis Lemma~\ref{lem:adaptive} actually uses. The mean of the
eastern-arc integral over $r\in[R,2R]$, sampled at thirteen radii, is
$1.4\times10^{-4}$, $5.7\times10^{-16}$ and $9.7\times10^{-40}$ at $R=3,5,7$;
the spread between the smallest and largest sampled value is enormous
($3.9\times10^{21}$ already at $R=3$), but that spread is dominated by the
$r$-dependence of $\exp(-cr^{2}\ln r)$ across the annulus rather than by proximity
to poles. R6a confirms the algebraic split \eqref{eq:split} to
$7.6\times10^{-21}$, which is what licenses reading $\Pi$ off by subtraction; R6
then measures $\max_{\theta}\ln|\Pi|$ on the eastern arcs and finds it
\emph{bounded} --- $1.41$, $0.52$, $-0.13$, $0.12$, $0.45$ at $R=4,6,8,10,12$ ---
so on this sample \Harc holds with exponent $0$. R3 is evidence for $\dim=3$ and
is labelled as such (Remark~\ref{rem:three}).

\subsection{Three silent failure modes}\label{sec:pitfalls}
Each of the following returns a perfectly smooth but wrong answer rather than an
error, and each cost us several digits before being diagnosed.

\begin{enumerate}[label=\textup{(P\arabic*)},leftmargin=2.6em,itemsep=4pt]
\item \emph{The separating abscissa.} The vertical connector must satisfy
$\lambda_{\sharp}<\sigma_{0}<\lambda_{\flat}$. Placing it outside does not make
the integral diverge; it silently computes a \emph{different} $K$-function. The
condition should be asserted in code, not assumed.

\item \emph{Panel width versus pole distance.} For real parameters all poles lie
on $\R$, at distance $|h|$ from the horizontal ray. A Gauss--Legendre panel
wider than $|h|$ places a singularity inside its Bernstein ellipse. Measured:
uniform panels of width $1.5$ at $h=0.25$ lose four digits; panels refined only
near the turning point lose three. Panel width $\approx|h|$ restores full
accuracy.

\item \emph{Fixed ray truncation fails once $\alpha\ne0$.} The factor
$e^{\pi\alpha s^{2}/\tau}$ grows like $e^{\pi\alpha u^{2}/\tau}$ along the ray
and is beaten by the decay $e^{-\frac{|N|}{2\tau}u^{2}\ln u}$ only once
$\ln u>\pi\alpha/|N|$; before that the integrand has a large hump.
Figure~\ref{fig:hump} shows a case whose value is $0.2046$ and whose integrand
peaks at $149$ near $u=-5.3$, still $2.3$ at $u=-8$: truncating there gives an
answer that is $12\%$ wrong and looks perfectly converged. The remedy is to
march outwards until the magnitude has passed its maximum \emph{and} fallen
below a prescribed fraction of it.
\end{enumerate}

\begin{figure}[t]
\centering
\includegraphics[width=0.56\textwidth]{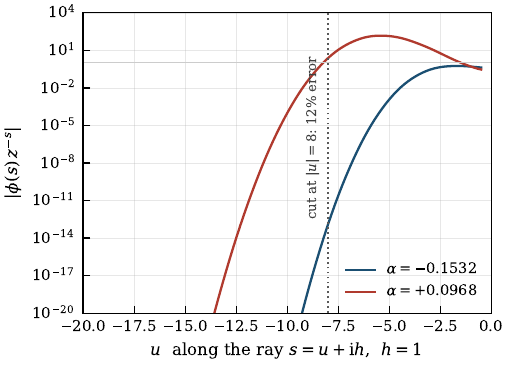}
\caption{Magnitude of the integrand along the western ray for the modular image
(E3) of the example \eqref{eq:example} at $z=1.6$, $h=1$. For $\alpha<0$ the
decay is monotone; for $\alpha>0$ the Gaussian factor produces a hump of height
$149$ that has not decayed at $|u|=8$. A fixed truncation there is $12\%$ wrong.}
\label{fig:hump}
\end{figure}

A fourth trap is not specific to this problem but was expensive: a memoisation
cache over a multiprecision special function must include the working precision
in its key. Caching $\ln G$ without it returned $20$-digit values to a
$30$-digit caller and cost seven digits, silently.

\section{\texorpdfstring{A corrigendum to \cite{KK2024}}{A corrigendum}}\label{sec:errata}

The single point below was found while implementing the results above and is
confirmed numerically. It is present in the published version --- everything
here was checked against \emph{Constr.\ Approx.} \textbf{63} (2026), 223--256
rather than against the preprint --- and it is a sign, not a typographical
ambiguity.

\subsection*{The sign in the Remark following Theorem 5}
That Remark \cite[p.~244]{KK2024} states, for a simple pole $q_{k}\in\Lp$ of minimal real part,
\[
B_{k,0}=\frac{\tau\prod_{j=1}^{m}G(b_j-q_k;\tau)\prod_{j=1}^{n}G(1+\tau-a_j+q_k;\tau)}
     {\prod_{j=m+1}^{q}G(1+\tau-b_j+q_k;\tau)\sideset{}{'}\prod_{j=n+1}^{p}G(a_j-q_k;\tau)}
     e^{\pi\alpha q_{k}^{2}/\tau},
\]
in conjunction with $K=-\big[\sum_k z^{-q_k}B_{k,0}\big](1+\cdots)$. The
vanishing factor at $s=q_k=a_j$ is $G(a_j-s;\tau)$, whose argument has
derivative $-1$ with respect to $s$; with Lemma~\ref{lem:Gzero} this gives
\[
\Res_{s=a_j}\frac{1}{G(a_j-s;\tau)}=-\tau,\qquad\text{not}\qquad+\tau .
\]
The constant should therefore read $-\tau$ (equivalently, the minus sign in the
displayed formula of \cite[Thm.~5]{KK2024} should be deleted). The Remark
following \cite[Thm.~4]{KK2024} is \emph{correct} as printed: there the
vanishing factor is $G(1+\tau-b_j+s;\tau)$, whose argument has derivative $+1$.

\emph{Numerical confirmation.} Take $(m,n,p,q)=(1,0,1,1)$, $a_{1}=0.3$,
$b_{1}=1.1$, $\tau=(\sqrt5-1)/2$, $\alpha=0$, $z=6$. Then $N=0$, $p=q$,
$\mu=b_{1}-a_{1}=0.8>0$, so this is the balanced case (v) of \cite{KK2024},
whose Table~1 prescribes $\theta^{+}=\eps$, $\theta^{-}=-\eps$ --- precisely the
right hairpin. Computing the contour integral and the residue sum
independently,
\[
K=0.279968197413998,\qquad
-\!\!\sum_{\Lp}\Res=0.279968197413998
\quad(\text{rel.\ }2.2\times10^{-22}),
\]
while the leading terms are
\[
-\Res_{s=a_1}=+0.35441655327321,
\qquad
\text{Remark as printed}=-0.35441655327321,
\]
a ratio of exactly $-1$.

\subsection*{Verified correct}
For the record, the following were checked pointwise to better than
$10^{-19}$ and are correct as printed: the quasi-periodicities
\cite[Eq.~(4)]{KK2024}; the modular transformation \cite[Eq.~(5)]{KK2024}, whose
exponents are unambiguously those of \eqref{eq:modular}; the transformation
\cite[Eq.~(16)]{KK2024} of $\Gs$ and its constants \cite[Eq.~(17)]{KK2024}; and
the gamma representation \cite[Eq.~(27)]{KK2024}; and the prefactor $A\tau$ of
\cite[Eq.~(30)]{KK2024}, which is the product $A\cdot\tau$, is typeset
unambiguously as such, and was verified on all four contour classes for
$\alpha\in\{0,0.25\}$ (T5c, $5.4\times10^{-21}$).

\section{Concluding remarks and open problems}\label{sec:open}

\begin{enumerate}[label=\textup{(O\arabic*)},leftmargin=2.6em,itemsep=5pt]

\item\label{op:diophantine} \emph{The closing-arc input.} Hypothesis \Harc of
\S\ref{sec:arcs}, on which Theorems~\ref{thm:B} and \ref{thm:D} rest, is a
growth bound on a product of double sines evaluated on $\Z+\tau\Z$, and it
depends on $\va,\vb$ as well as on $\tau$. We prove it for no $\tau$ at all.
Two reductions have been carried out and a third step is missing.
\emph{Done:} the exponent needed is only $o(R^{2}\ln R)$, not the
$O(R\ln R)$ one might guess from the small-denominator literature; and the
prescribed sequence of radii can be dispensed with, since
Lemma~\ref{lem:adaptive} produces it from an $L^{1}$ bound on annuli.
\emph{Missing:} an upper bound of size $\exp(o(R^{2}\ln R))$ for
$\iint_{A(R)}|\Pi|\,dA$. The obstruction is circular in a specific way: near a
pole $s_{0}$ of $\Pi$ one has $|\Pi(s)|\le C_{s_{0}}|s-s_{0}|^{-1}$ with
$C_{s_{0}}$ itself a product of reciprocal sines over the remaining lattice
points, so controlling the area integral requires controlling exactly the
quantity one set out to control. Breaking that circle --- or replacing the
crude pointwise bound by a mean-value argument that exploits
$\int_{0}^{1}\ln|2\sin\pi t|\,dt=0$ --- would make Theorems~\ref{thm:B} and
\ref{thm:D} unconditional and is the single most valuable addition this paper
still needs. A separate and easier question: find number-theoretic conditions on
$(\tau,\va,\vb)$ sufficient for \Harc. The analogous quantity controls the
convergence of the residue double series (Remark~\ref{rem:diophantine}) and, one
rung down, the series representation of the density of the supremum of a stable
process \cite{HubalekKuznetsov2011,Kuznetsov2013,HackmannKuznetsov2013}, but
those results are for a specific coefficient structure and do not transfer as they
stand.

\item\label{op:dim} \emph{Is the dimension exactly three?} We prove only
$\dim\operatorname{span}\{\Kr,\Kl,\Ku,\Kd\}\le3$ (Theorem~\ref{thm:C}); the
numerical determinant of Remark~\ref{rem:three} is evidence, not proof. A proof
would exhibit three asymptotic regimes in which $\Kr$, $\Kl$ and $\Ku$ have
distinct leading behaviour, with an invertible coefficient matrix.

\item\label{op:Ahat} \emph{Identify $\widehat A$.} The transcendent
$\widehat A=\Ku-\Kr$ of Theorem~\ref{thm:C} carries all the information not
contained in the two residue sums. Is there a closed form, for instance in terms
of the double sine function $S_{2}$?

\item\label{op:mellin} \emph{Does a Mellin transform exist at all?} We prove only
that no \emph{vertical} contour is admissible (Theorem~\ref{thm:F1}). Whether
$\int_{0}^{\infty}K^{\bullet}(x)x^{s-1}dx$ converges for some $s$ in the
non-degenerate case requires genuine two-sided asymptotics for $K^{\bullet}$ and
the non-vanishing of the leading coefficients; see Remark~\ref{rem:notclaimed}.

\item\label{op:transforms} \emph{Laplace transform and Mellin convolution.} For
these two the kernel identity of Theorem~\ref{thm:kernelcalc} holds but the
$x$-space identity is not established (Theorem~\ref{thm:closure}(iv)). What is
missing is a substitute for the fundamental strip: an analytic continuation
argument, or a distributional formulation, in which the two convolutions make
sense for $N<0$.

\item\label{op:complex} \emph{Complex parameters.} Our implementation assumes
$a_j,b_j\in\R$, so that all poles are collinear. For complex parameters
Proposition~\ref{prop:combs} still confines them to finitely many horizontal
lines and the theory is unchanged, but the contour must weave between those
lines and the closing arcs must be chosen accordingly.

\item\label{op:Nneg} \emph{Occurrences of $N<0$.} All five applications in
\cite[\S6]{KK2024} --- extrema of stable processes, eigenfunctions of the
fractional Laplacian on a half-line, exponential functionals of hypergeometric
L\'evy processes, the generalised Kilbas--Saigo function, and Barnes beta
distributions --- have $N=0$, and Corollary~\ref{cor:Ntrap} explains why: the
classical transform calculus preserves $N$, and hypergeometric input enters at
$N=0$. An $N\ne0$ example must therefore be assembled from double-gamma data
directly. Concretely, one needs a Mellin transform obeying $M(s+1)=R(s)M(s)$
where $R$ is a ratio of gamma functions with \emph{unequal} numbers of factors
upstairs and downstairs; the recursions in
\cite{Kuznetsov2011,KuznetsovPardo2013,Ostrovsky2013} are all balanced. Finding
an unbalanced one --- or proving that unbalanced recursions cannot arise from a
L\'evy or Markov structure --- would be the most interesting sequel.

\item\label{op:rest} \emph{The other excluded cases.} Besides $N<0$,
\cite{KK2024} excludes $N=0$ with $\Rea\alpha<0$, and three boundary
configurations. The first should yield to the same analysis --- there
$\Rea\alpha<0$ turns the vertical direction into a growth direction while $N=0$
leaves the horizontal one marginal --- but the marginal decay makes both the
contour classification and the closing arcs more delicate, and we leave it open.
\end{enumerate}

\section*{Reproducibility}
All computations reported here were performed with \texttt{mpmath}
\cite{mpmath} at the stated working precisions. Five primary modules are used, and each
is self-validating --- running it reproduces the corresponding table:
an evaluator for $G(z;\tau)$ at general $\tau>0$ implementing
Lemmas~\ref{lem:Gprime} and \ref{lem:binet} (Table~\ref{tab:V}); the four
$K$-functions of Definition~\ref{def:four} with their residue series and the
recurrence \eqref{eq:recur} (Table~\ref{tab:T}); the insertion
rules of Lemma~\ref{lem:insert} and the transforms of
Theorem~\ref{thm:closure} (checks CA--CC of Table~\ref{tab:C}); the slanted-ray
realisation, the evaluation determinant and the closing-arc probes (checks
R1--R4 of Table~\ref{tab:R}); and the derivative and fractional-derivative
formulas \eqref{eq:derivk}, \eqref{eq:RL} together with the adaptive-radius and
double-sine measurements (checks CE1, CE2 and R5, R6 of the same two tables).
Two additional targeted regressions, \texttt{v7\_fast\_checks.py} and
\texttt{rl\_alpha\_nonzero\_check.py}, independently check the corrected
$\alpha$-dependent shift and the nonzero-$\alpha$ Riemann--Liouville formula.
The complete validation transcript is supplied with the source package.

\subsection*{Acknowledgements}
The author thanks D.~Karp and A.~Kuznetsov, whose paper \cite{KK2024} both posed
this question and supplied every tool needed to answer it.


\end{document}